\renewcommand{\leq}{\leqslant}
\renewcommand{\geq}{\geqslant}
\newcommand{\ee}{{\rm e}}
\newcommand{\ii}{{\rm i}}
\newcommand{\G}{\mathbf{G}}
\newcommand{\Cc}{\mathbb{C}}
\newcommand{\Zz}{\mathbb{Z}}
\newcommand{\Pp}{\mathbb{P}}
\newcommand{\Rr}{\mathbb{R}}
\newcommand{\Qq}{\mathbb{Q}}
\newcommand{\Fp}{\mathbb{F}}
\newcommand{\ra}{\rightarrow}
\DeclareMathOperator{\nsp}{N_{Spin}}
\DeclareMathOperator{\disc}{disc}
\DeclareMathOperator{\Sym}{Sym}
\DeclareMathOperator{\Frob}{\mathrm{Frob}}
\DeclareMathOperator{\Gal}{Gal}
\DeclareMathOperator{\Aut}{Aut}
\DeclareMathOperator{\GL}{GL}
\DeclareMathOperator{\Sp}{Sp}
\DeclareMathOperator{\Oo}{O}
\newcommand{\eps}{\varepsilon}
\theoremstyle{plain}
\newtheorem{theorem}{Theorem}[section]
\newtheorem{lemma}[theorem]{Lemma}
\newtheorem{corollary}[theorem]{Corollary}
\newtheorem{proposition}[theorem]{Proposition}
\theoremstyle{remark}
\newtheorem{remark}[theorem]{Remark}
\theoremstyle{definition}
\begin{document}

\title[Independence of the zeros of $L$-functions]{Independence of the zeros of elliptic curve\\ 
$L$-functions  over function fields}

\author{Byungchul Cha}
\address{Department of Mathematics and Computer Science, Muhlenberg College, 2400 Chew st., Allentown, PA 18104, USA}
\email{cha@muhlenberg.edu}
\author{Daniel Fiorilli}
\address{Department of Mathematics and Statistics, University of Ottawa, 
585 King Edward Ave, Ottawa, Ontario, K1N 6N5, Canada}
\email{daniel.fiorilli@uottawa.ca}
\author{Florent Jouve}
\address{D\'epartement de Math\'ematiques\\ B\^atiment 425\\ Facult\'e des Sciences d'Orsay\\ Universit\'e Paris-Sud 11\\
F-91405 Orsay Cedex, France}
\email{florent.jouve@math.u-psud.fr}





\begin{abstract}
The Linear Independence hypothesis (LI), which states roughly that the imaginary parts of the critical zeros of Dirichlet $L$-functions are linearly independent over 
the rationals, is known to have interesting consequences in the study of prime number races, as was pointed out by Rubinstein and Sarnak. In this paper, we prove that a function field analogue of LI holds generically within certain families of elliptic curve $L$-functions and their symmetric powers. More precisely, for certain algebro-geometric families of elliptic curves defined over the function field of a fixed curve over a finite field, we give strong quantitative bounds for the number of elements in the family for which the relevant $L$-functions have their zeros as linearly independent over the rationals as possible.
\end{abstract}

\maketitle

\section{Motivation and the function field setting}\label{section:bias}

\subsection{Chebychev's bias: the classical case and the case of elliptic curves over $\Qq$}\label{sub:motivation}
 The prime number theorem in arithmetic progressions asserts that prime numbers are 
 asymptotically equally distributed among
 the invertible classes modulo a given integer $q\geq 1$. However Chebychev first noticed (in the case 
 $q=4$, see~\cite{Che53}) that if one only goes up to a given $x\geq 2$ the number 
 of primes congruent to $3$ modulo $4$ ``often exceeds''  the number 
 of those congruent to $1$ modulo $4$. This phenomenon called \emph{Chebychev's bias} 
 has since been extensively studied and generalized. A contemporary reference containing 
 background and presenting a systematic approach of this question is~\cite{RS94}. 
 In \emph{loc.~cit.~}Rubinstein and Sarnak explain precisely the role played by 
 the Dirichlet $L$-function $L(s,\chi)$ for primitive characters modulo $q$. Notably a
  (wide open) conjecture 
 referred to as LI (for Linear Independence, also called GSH, for Grand Simplicity Hypothesis, in~\cite{RS94})
  asserts that the multiset $\{\gamma\geq 0\colon L(1/2+i\gamma,\chi)=0\}$ 
 where $\chi$ runs over the set of primitive Dirichlet characters modulo $q$, is linearly 
 independent over $\Qq$. This assumption is shown in~\cite{RS94} to be crucial in the study of Chebychev's bias.
 
 A natural analogue from arithmetic geometry one might think of is the following. Let 
 $E/\Qq$ be an elliptic curve. One has the Sato--Tate conjecture (now a theorem thanks to~\cite{CHT08},~\cite{HSBT10}, 
 and~\cite{Tay08}) 
 that can be seen as analogous to the prime number theorem in arithmetic progressions 
 since it asserts that for any real numbers $a,b$ satisfying $0\leq a\leq b\leq \pi$, one has
 $$
 \lim_{x\rightarrow\infty}\frac{\#\{p\leq x\colon E
 \text{ has good reduction at $p$ and }\theta_p\in [a,b]\}}{\pi(x)}=\frac{2}{\pi}\int_a^b\sin^2u\,{\rm d}u\,,
 $$
 as long as $E$ is not a CM elliptic curve and
 where for a prime $p$ of good reduction we use the Hasse bound to write 
 $a_p(E):=p+1-\#E(\Fp_p)=2\sqrt{p}\cos{\theta_p}$, for a unique $\theta_p\in[0,\pi]$. 
 
 Mazur \cite{Maz08} raises the question of the existence of a bias 
 between the primes up to $x$ for which $a_p(E)$ is positive
  and those for which it is negative. Sarnak's framework to study 
  this question \cite{Sar07} turns out to be very effective,
   and explains very well this race, in terms of the zeros 
   (and potential poles) of the symmetric powers $L(\text{Sym}^n E,s)$,
    conditional on a Riemann Hypothesis and a Linear Independence assumption.
     Sarnak also remarked that a related question can be studied by considering
      the sign of the summatory function of $a_p(E)/\sqrt p$ using the zeros of $L(E,s)$ alone. 
 This function is 
 $$
 S(x)=-\frac{\log x}{\sqrt x}\sum_{p\leq x}\frac{a_p(E)}{\sqrt{p}}\,.
 $$
 The associated lower and upper densities Sarnak introduces are analogous to 
 the ones used in~\cite{RS94} in the classical setting:
 $$
 \underline{\overline{\delta}}(E)=\underline{\overline{\lim}}_{T\rightarrow\infty}\frac{1}{\log T}\int_{2}^T 
 {\bf 1}_{\{x\colon S(x)\geq 0\}}(t)\frac{dt}{t}\,.
 $$
Sarnak shows that conditionally on the Riemann Hypothesis for $L(E,s)$ and a hypothesis 
about the independence of the zeros of $L(E,s)$ the two limits coincide 
and the common value $\delta(E)$ is different from $1/2$. He also discovers a link 
between the value of $\delta(E)$ and the analytic rank of $E$. In~\cite{Fio13} the second 
author pushes this analysis 
further and shows that (assuming the above hypotheses) large analytic rank (compared to $\sqrt{\log N_E}$, where 
$N_E$ is the conductor of $E$) is
actually equivalent to high bias (i.e.~$\delta(E)$ can get arbitrarily close to $1$).

 From the above references it is clear that in both the residue classes mod $q$ and the elliptic curve settings the study of Chebychev's bias and its analogues relies on highly 
 conjectural properties of $L$-functions. Notably LI or the hypothesis of Bounded Multiplicity
 used in~\cite{Fio13} seem highly speculative given the current 
 state of our knowledge of the $L$-functions involved. 
 
 The purpose of the present paper is to give 
 a framework where unconditional analogues of LI can be proved. This setting is geometric in 
 nature (we focus on elliptic curves over 
 function fields of curves over a finite field)
  thus much more is known about the corresponding $L$-functions. Consequences 
  for analogues of the Chebychev bias in this setting are among 
  the main subjects developed in our paper~\cite{CFJ14}. 
  The relevance of studying the Chebychev bias in the function field setting was 
  first pointed out by the first named author e.g.~in~\cite{Cha08}.

\subsection{$L$-functions of elliptic curves over function fields}\label{sub:L-func}

Let $q$ be a power of a prime number $p\neq 2,3$. Let $\Fp_q$ be a field 
of $q$ elements and let $C/\Fp_q$ be a smooth projective geometrically connected curve 
of genus $g$. We define $K:=\Fp_q(C)$ to be the function field of $C$.
 Finally we fix an auxiliary
 prime $\ell\neq p$.

Let us define precisely what are the $L$-functions we are interested in. 
We follow~\cite[\S$3.1.7$]{Ulm05} to define 
the $L$-function $L(\rho, K, T)$ of any continuous, 
absolutely irreducible $\ell$-adic representation 
\[
\rho\colon G_K \longrightarrow \GL(V)
\]
of the absolute Galois group $G_K$ of $K$ in some finite dimensional 
$\Qq_{\ell}$-vector space $V$. For each $v$, we choose
 a decomposition group $D_v \subset G(K)$ and we let $I_v$ and 
 $\Frob_v$ be the corresponding inertia group and the geometric
  Frobenius conjugacy class, respectively. (We will sometimes write $\Frob_{\Fp,v}$ if the field 
  of constants $\Fp\supseteq\Fp_q$ is not obvious from context.) Then, the $L$-function $L(\rho, K, T)$
   is defined by the formal product
\begin{equation}\label{def:LFunction}
L(\rho, K, T) = \prod_v \det
\left(
1 - \rho(\Frob_v) T^{\deg v}  \mid V^{\rho(I_v)}
\right)^{-1}\,,
\end{equation}
where $V^{\rho(I_v)}$ denotes the subspace of inertia invariants.

Given an elliptic curve $E/K$ we focus on the continuous $\ell$-adic representation
\[
\rho_{\ell,E/K} : G_K \longrightarrow \Aut(V_{\ell}(E)),
\]
arising from the Galois action on $V_\ell(E) := T_{\ell}(E) \otimes \Qq_\ell$, where 
$T_\ell(E)$ is the $\ell$-adic Tate module of $E/K$. Because of a well known
 independence of $\ell$ property of the family $(\rho_{\ell,E/K})$ (namely 
  $(\rho_{\ell,E/K})_\ell$ forms a compatible system of representations),
   the $L$-function $L(\rho_{\ell, E/K},K,T)$ 
  will often be denoted simply $L(E/K,T)$ in the sequel.
  
   \par\medskip
More generally for each $m\ge1$ we may form 
\[
\Sym^m(\rho_{\ell,E/K}): G_K \longrightarrow \Aut(\Sym^m(V_{\ell}(E))),
\]
by taking the $m$-th symmetric power of $\rho_{\ell,E/K}$. Again by independence of $\ell$ 
we will write $L((\Sym^m E)/K,T)$ for the $L$-function attached to the 
representation $\Sym^m(\rho_{\ell,E/K})$.

Let us recall the explicit form of the local factors of $L((\Sym^m E)/ K, T)$. 
The local factor of 
$L((\Sym^m E)/ K, T)$ at an unramified prime $v$ is given by 
\begin{equation}\label{eq:LocalSym}
\prod_{j=0}^m(1 - {\alpha_v}^{m-j}{\beta_v}^jT^{\deg(v)} )^{-1}\,,
\end{equation}
where $\alpha_v$ ,$\beta_v$ are the (geometric) Frobenius eigenvalues at $v$
 (i.e.~the numerator of the zeta function of the fiber $E_v$ over the residue field
 $\Fp_{q^{\deg v}}$ is $L(E_v/\Fp_{q^{\deg v}},T):=1-(\alpha_v+\beta_v)T+q^{\deg v}T^2$).
 
 Let us recall deep classical facts following notably from work of Deligne 
and Grothendieck. The statement can be found (written in greater generality)
 in~\cite[\S$3.1.7$ and \S$4.1$]{Ulm05}. The deepest part (iii) of the 
 statement is a consequence of Deligne's purity result~\cite[\S3.2.3]{Del80}.
 
 \begin{theorem}\label{propL}
Assuming the $j$-invariant of $E/K$ is non-constant one has 
\begin{itemize}
\item[(i)]
$L((\Sym^m E)/ K, T)\in 1+T\Zz[T]$.
\item[(ii)]
 $L((\Sym^m E)/ K, T)$ satisfies the functional equation
 \begin{equation} \label{func-eq}
 L((\Sym^m E)/ K, T)=\eps_m(E/K)\cdot (q^{(m+1)/2}T)^{\nu_m}\cdot L((\Sym^m E)/K,1/(q^{m+1}T))\,
 \end{equation}
 where $\nu_m:=\deg L((\Sym^m E)/K,T)$ and $\eps_m(E/K)=\pm 1$. 
 Further one has for $m\geq 1$
 $$
 \nu_m=\deg \mathfrak n_m+(m+1)(2g-2)\,,
 $$
 where $\mathfrak n_m$ is the Artin conductor of the representation $\Sym^m(\rho_{\ell,E/K})$.
 If $m=1$,
 $$
 \mathfrak n_1=M+2 A\,,
 $$
  where $M$ (resp. $A$)
 denotes the locus of multiplicative (resp. additive) reduction of $E/K$.
 \item[(iii)] 
If we write 
\begin{equation}\label{eq:LfunctionEm}
 L\left((\Sym^m E)/K,T\right)=\prod_{j=1}^{\nu_m}(1-\gamma_{m,j}T),
\end{equation}
for some $\gamma_{m, j}$, then each $\gamma_{m,j}$ is of absolute value 
$q^{(m+1)/2}$ under any
 complex embedding of $\overline{\Qq_{\ell}}$. Moreover one has
 $$
 \eps_m(E/K)q^{\nu_m(m+1)/2}=\prod_{j=1}^{\nu_m}(-\gamma_{m,j})\,.
 $$
 \end{itemize}
  \end{theorem}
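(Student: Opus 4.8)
This is a recollection of classical facts, so the proof proposal is really a matter of correctly invoking the Grothendieck–Lefschetz trace formalism together with Deligne's purity theorem; there is no genuinely new argument to construct. The plan is as follows. First, for part (i), one interprets the $L$-function cohomologically: by the Grothendieck–Ogg–Shafarevich theory applied to the lisse $\overline{\Qq_\ell}$-sheaf $\mathcal F_m$ on a dense open $U\subseteq C$ corresponding to $\Sym^m(\rho_{\ell,E/K})$, one has
\[
L((\Sym^m E)/K,T)=\prod_{i=0}^{2}\det\bigl(1-\Frob_q T\mid H^i_c(U_{\overline{\Fp_q}},\mathcal F_m)\bigr)^{(-1)^{i+1}}.
\]
The non-constancy of the $j$-invariant forces $\mathcal F_m$ to be geometrically irreducible and nontrivial (the geometric monodromy is, after work of Igusa, the full $\SL_2$ acting on $\Sym^m$ of the standard representation, via Katz), so $H^0_c=H^2_c=0$ and only the middle cohomology survives; this makes $L$ a genuine polynomial, and the fact that it lies in $1+T\Zz[T]$ follows because the sheaf (and hence the Euler product) is defined over $\Zz$ after inverting finitely many primes and the coefficients are algebraic integers stable under $\Gal(\overline{\Qq}/\Qq)$, hence rational integers. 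I would cite~\cite[\S3.1.7, \S4.1]{Ulm05} for the precise bookkeeping rather than reprove it.

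For part (ii), the functional equation is Poincaré duality on the curve: the cup product pairing $H^1_c(U_{\overline{\Fp_q}},\mathcal F_m)\times H^1_c(U_{\overline{\Fp_q}},\mathcal F_m^\vee)\to H^2_c\cong\overline{\Qq_\ell}(-1)$, combined with the self-duality $\mathcal F_m^\vee\cong\mathcal F_m(m)$ coming from the symplectic (hence self-dual up to twist) nature of $V_\ell(E)$, exchanges the eigenvalue $\gamma_{m,j}$ with $q^{m+1}/\gamma_{m,j}$; this gives~\eqref{func-eq} with $\eps_m(E/K)=\pm1$ the sign of the resulting orthogonal or symplectic pairing. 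The degree formula $\nu_m=\dim H^1_c=\deg\mathfrak n_m+(m+1)(2g-2)$ is exactly the Euler–Poincaré formula $-\chi_c(U,\mathcal F_m)=\operatorname{rk}(\mathcal F_m)(2g-2)+\sum_v(\operatorname{Sw}_v+\dim\mathcal F_m/\mathcal F_m^{I_v})$, with the local terms repackaged as the Artin conductor $\mathfrak n_m$; the special case $\mathfrak n_1=M+2A$ is the classical computation of the conductor of an elliptic curve (tame at multiplicative primes contributing $1$, and contributing $2$ at additive primes since $p\neq2,3$ rules out wild ramification). Again I would reference Ulmer for the precise statement.

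Part (iii) is the deep input: each inverse root $\gamma_{m,j}$ is a Frobenius eigenvalue on $H^1_c(U_{\overline{\Fp_q}},\mathcal F_m)$, and since $\mathcal F_m$ is pure of weight $m$ (being $\Sym^m$ of the weight-one sheaf $V_\ell(E)$, which is pure by the Weil conjectures for the elliptic surface / Deligne's bound $|a_v|\le2\sqrt{q^{\deg v}}$), Deligne's main theorem in Weil II~\cite[\S3.2.3]{Del80} gives that $H^1_c$ modulo its image from the boundary is pure of weight $m+1$; the geometric irreducibility again kills the boundary contribution, so $|\gamma_{m,j}|=q^{(m+1)/2}$ under every complex embedding. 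The final product formula $\eps_m(E/K)q^{\nu_m(m+1)/2}=\prod_j(-\gamma_{m,j})$ is obtained by evaluating the functional equation~\eqref{func-eq} appropriately — comparing leading coefficients, i.e.\ the determinant of Frobenius on $H^1_c$ — or equivalently by taking $T\to\infty$ in~\eqref{eq:LfunctionEm} against~\eqref{func-eq}. The only real obstacle is making sure the geometric monodromy is big enough that $H^0_c=H^2_c=0$ (so that $L$ is a polynomial of the stated degree with \emph{all} roots of the stated weight, with no parasitic trivial factors); this is where non-constancy of $j$ is used, and it is handled exactly as in~\cite{Ulm05}.
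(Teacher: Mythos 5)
Your recollection takes the same route as the paper, which does not prove the theorem but simply records it as known, citing Ulmer~\cite[\S 3.1.7, \S 4.1]{Ulm05} for (i)--(ii) and Deligne~\cite[\S 3.2.3]{Del80} for (iii); you identify exactly the same ingredients and the same sources. One technical imprecision worth noting: since $E$ has at least one place $v$ of multiplicative reduction, $(\Sym^m V_\ell(E))^{I_v}\neq 0$, so the Euler product in~\eqref{def:LFunction} (which retains local factors at ramified places via inertia invariants) is \emph{not} computed by $H^i_c(U_{\overline{\Fp_q}},\mathcal F_m)$ on the open of lisseness; the correct object is the middle-extension cohomology $H^i(C_{\overline{\Fp_q}},j_*\mathcal F_m)$. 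Indeed the Euler--Poincar\'e formula you write down, with local term $\operatorname{Sw}_v+\dim\mathcal F_m/\mathcal F_m^{I_v}$, is the one that computes $\dim H^1(C,j_*\mathcal F_m)=\nu_m$ (for compactly supported cohomology of $U$ the local term is $\operatorname{Sw}_v+\operatorname{rk}\mathcal F_m$), and it is $H^1(C,j_*\mathcal F_m)$ --- the image of $H^1_c(U,\mathcal F_m)$ in $H^1(U,\mathcal F_m)$ --- that Deligne shows is pure of weight $m+1$; your phrase about killing the boundary contribution is papering over this. Since you ultimately defer to Ulmer for the bookkeeping, this is not a gap in substance, but as written the opening cohomological display gives the wrong group for the $L$-function you are trying to describe.
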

  
   We deduce from Theorem~\ref{propL} that we can define
   angles $\theta_{m, j}\in [0,2\pi]$ by the equation
\begin{equation}\label{eq:InverseZeroForAllM}
\gamma_{m, j} = q^{(m+1)/2}\ee^{\ii \theta_{m, j}},
\end{equation}
for all $j = 1, \dots, \nu_m$ and for each $m\ge1$.

Since our goal is to understand possible linear dependence relations among the (inverse) zeros 
of $L\left((\Sym^m E)/K,T\right)$ we first point out that~\eqref{func-eq} might impose that 
 $L\left((\Sym^m E)/K,T\right)$ vanishes at $\pm q^{-(m+1)/2}$. First we define the
  \emph{unitarized} symmetric power $L$-function of $E/K$:
  $$
  L_u((\Sym^m E)/K,T):=L((\Sym^m E)/K,T/q^{(m+1)/2})\,,
  $$
  which is a monic polynomial of $1+T\Zz[1/q^{(m+1)/2}][T]$. It is either a reciprocal or a 
  skew reciprocal polynomial:
  \begin{equation} \label{norm-func-eq}
  L_u((\Sym^m E)/K,T)=\eps_m(E/K)\cdot T^{\nu_m}\cdot L_u((\Sym^m E)/K,1/T)\,.
  \end{equation}
 This constraint is the same as the one satisfied by characteristic polynomials of isometries of 
 symmetric inner product spaces (the determinant of the 
 opposite of the isometry corresponding to the sign of the 
 functional equation). This is of course no coincidence. We handle possible 
 imposed roots (see~\cite[(4.1.2.1)]{Ulm05}) by defining \emph{reduced}
  symmetric power $L$-functions of $E/K$:
\begin{equation}\label{red-char}
 L_{\rm red}((\Sym^m E)/K,T)=\begin{cases} &L_u((\Sym^m E)/K,T)/(1+\eps_m(E/K)T)
 \,,\,\text{if}\,\,\nu_m\,\text{is odd}\,,\\
                                &L_u((\Sym^m E)/K,T)/(1-T^2)\,,\,\text{if}\,\,\nu_m\,\text{is even and}\,\,\eps_m(E/K)=-1\,,\\
                                 &L_u((\Sym^m E)/K,T)\,,\,\text{otherwise}\,.
                 \end{cases}
 \end{equation}
Note that the degree $\nu_{m,{\rm red}}$ of $L_{\rm red}((\Sym^m E)/K,T)$ is necessarily even.  


We want to study properties of linear independence over $\Qq$ of the inverse roots $\gamma_{m,j}$ 
given by~\eqref{eq:InverseZeroForAllM}, where up to reordering 
we assume that the first $\nu_{m,{\rm red}}$ roots of $L\left((\Sym^m E)/K,T\right)$ are precisely 
those of $L_{\rm red}\left((\Sym^m E)/K,T\right)$
. These algebraic integers have modulus $q^{(m+1)/2}$ (i.e.~
their ``reduced'' versions have modulus $1$) thus only 
the possible relations among their arguments are of interest. The relations we focus on are those 
of the form
$$
\prod_{j=1}^{\nu_{m,{\rm red}}}\ee^{\ii r_j\theta_{m, j}}=1\,,\qquad r_j\in \Qq\,,
$$
or equivalently after clearing denominators,
$$
\prod_{j=1}^{\nu_{m,{\rm red}}}\ee^{\ii n_j\theta_{m, j}}=1\,,\qquad n_j\in \Zz\,.
$$
In other words we wonder if the family $(1,\theta_{m,1}/2\pi,\ldots,\theta_{m,\nu_{m,{\rm red}}}/2\pi)$ 
is linearly independent over $\Qq$.
Since the main motivation of this study is to obtain meaningful results from the point 
of view of Chebychev's bias for elliptic curves over function fields, we address the more general 
question of the existence of linear relations among the $\theta_{m,j}$ \emph{as $m$ varies in 
a finite set} (as Sarnak explains the deepest results from the 
point of view of Chebychev's bias would follow from considering \emph{all} 
symmetric power $L$-functions at once but unfortunately this 
is beyond the reach of our method).
 Consequently the linear relations we are truly interested in are of the form
$$
\prod_{m=1}^k\left(\prod_{j=1}^{\nu_{m,{\rm red}}}\ee^{\ii n_{m,j}\theta_{m, j}}\right)=1\,,\qquad n_{m,j}
\in \Zz\,,
$$
where $k\geq 1$ is some fixed integer. Of course the functional equation~\eqref{func-eq} 
 translates into 
a linear dependence relation of the type above among arguments of reciprocal roots 
$\gamma_{m,j}$ (precisely these relations are given by~\eqref{eq:triv}) .
 We will call those \emph{trivial relations} and we will further denote 
$$
{\rm Rel}\left((\gamma_{m,j})_{\substack{1\leq j\leq \nu_{m,{\rm red}}\\1\leq m\leq k}}\right)
=\left\{(n_{m,j})_{\substack{1\leq j\leq \nu_{m,{\rm red}}\\1\leq m\leq k}}\colon n_{m,j}\in\Zz\text{ and } 
\prod_{m=1}^k\left(\prod_{j=1}^{\nu_{m,{\rm red}}}\ee^{\ii n_{m,j}\theta_{m, j}}\right)=1\right\}
$$
for the set of multiplicative relations among inverse roots of $L_{\rm red}(\Sym^m E/K;T)$ 
with $1\leq m\leq k$. We will say that this set is trivial if it consists only of trivial relations. Ordering the inverse roots as in~\eqref{eq:triv} the trivial relations 
are concatenations of (at most) $\nu_{m,{\rm red}}$-tuples obtained by summing row 
vectors of the shape
$$
(0,\ldots,0,1,0,\ldots 0,1,0,\ldots 0)
$$
where the two nonzero coordinates are separated by $\nu_{m,{\rm red}}/2-1$ coordinates $0$.

 We will study the existence 
of such relations among fixed families of elliptic curves over $K$. In Section~\ref{section:families} 
 we present the specific families we focus on and state our main results (Theorem~\ref{th:main1}
  and Theorem~\ref{th:main2}). Section~\ref{section:Galois} 
 can be read mostly independently of the rest of the paper: it translates (following an idea of 
 Girstmair)  the question of independence of the zeros into a question in the representation 
 theory of particular Weyl groups appearing as Galois groups over $\Qq$ of our $L$-functions. 
 In Section~\ref{section:Galois} we also give the proof of a uniform version 
 (Proposition~\ref{KoProp1.1}) of a sample of one of 
 our main results. Section~\ref{section:LS} is the technical heart of the paper. It establishes 
 general large sieve statements from which we deduce the proofs (in Section~\ref{section:proof}) 
 of our main results by appealing to big monodromy statements due to Katz.

\section{Some families of elliptic curves and generic Linear Independence}\label{section:families}

 Given a fixed elliptic curve $E/K$ with non-constant $j$-invariant we describe two ways of constructing 
 families of elliptic curves over $K$ from the base curve $E/K$.
  These families are both constructed by Katz (see~\cite{Kat02} and ~\cite{Kat05}). 
 One of the main reasons we focus on these particular families 
 is Katz's deep input asserting both these families have big monodromy 
 in a sense we will make precise later.

 \subsection{A family of quadratic twists}\label{section:quadtwists}
 
We keep the notation as in the previous section. For ease of exposition we only recall 
standard facts about quadratic twists of $E/K$ in the case where $C=\Pp^1$ (i.e. 
$K$ is the rational function field $\Fp_q(t)$). We let 
$\mathcal E\rightarrow C$ be the corresponding minimal Weierstrass model (i.e.~the 
identity component of the N\'eron model of $E$). This model is obtained by gluing the affine 
part of $E/K$ given, say, 
by the Weierstrass equation $y^2=x^3+ax+b$, where $a,b\in\mathcal \Fp_q[t]$, together with a similar model 
``at infinity''. For 
each $f\in K^\times$ we consider
$$
E_f\colon y^2=x^3+f^2ax+f^3b\,
$$
which is a Weierstrass equation for an elliptic curve over $K$. The extension $K(\sqrt{f})/K$ 
is the smallest over which $E$ and $E_f$ are isomorphic (see e.g.~\cite[Lemma $2.4$]{BH12}). 
Thus $E$ and $E_f$ are isomorphic over $K$ if and only if $f$ is a square in $K$. 
A \emph{quadratic twist} 
of $E/K$ is an elliptic curve $E_f/K$ such that $f$ is not a square in $K$. 
Note that $E_f$ is isomorphic to $E_g$ over $K$ if  
and only if there exists $c\in K^\times$ such that $f=gc^2$.
 
Let $\Delta\in\Fp_q(t)$ be the discriminant of $E/K$; then
$E_f/K$ has discriminant $f^6\Delta$. Therefore, away from 
the irreducible factors of $f$, the curves $E$ and $E_f$ have the same locus of good reduction. 
Let $v$ be a place of good reduction for $E$ and $E_f$. A crucial feature of quadratic twists is that 
for any $f\in K^\times$ one has (see e.g.~\cite[\S 2.4]{BH12})
\begin{equation}\label{eq:local-twist}
L(E_{f,v}/\Fp_{q^{\deg v}},T)=L\left(E_v/\Fp_{q^{\deg v}},\left(\frac{f}{v}\right)T\right)\,,
\end{equation}
where $(\frac{\cdot}{v})$ denotes the Legendre symbol of $\Fp_{q^{\deg v}}$. 
From the representation theoretic 
point of view the $L$-function of a quadratic twist $E_f/K$ can be
 defined as the $L$-function of $\rho_{\ell,E/K}\otimes 
\chi_f$ where $\chi_f$ is the unique nontrivial $K$-automorphism of $K(\sqrt{f})$.
 This point of 
view makes~\eqref{eq:local-twist} obvious. 

Let us now assume that $\mathcal E\rightarrow C$ has at least one fiber 
of multiplicative reduction and fix a nonzero element $m\in \mathcal \Fp_q[t]$ 
which vanishes at at least one point of the locus $M$ of multiplicative reduction of 
$\mathcal E\rightarrow C$.
The ``twisting family'' we consider was first introduced by Katz. It is the $(d+1)$-dimensional 
affine variety for 
which the $\Fp$-rational points are, 
for any algebraic extension $\Fp\supseteq \Fp_q$,
\begin{equation}\label{twisting-space}
\mathcal F_d(\Fp)=\{f\in \Fp[t]\colon f\text{ squarefree},\,\deg f=d,\, {\rm gcd} (f,m)=1\}\,,
\end{equation}
where $d\geq 1$ is an integer. A crucial fact in 
view of the study we have in mind is the following: 
if $f\in\mathcal F_d(\Fp_{q^n})$ then $\deg L(E_f/K,T)$ only depends
 on $d$ and $q$ (in particular it is independent of 
$n$ so that ultimately we will let $n\rightarrow \infty$). 

Fix $d\geq 2$. One may consider $1$-parameter 
subfamilies of $\mathcal F_d$ for which the kind of generic property 
of independence of the zeros we have in mind can be quite easily drawn from 
known results. A nice feature of these $1$-parameter families is that 
they make it possible to keep track of uniformity issues with respect to the parameters.
Fix $\tilde{f}\in \mathcal F_{d-1}(\Fp_q)$. The family we consider is the open affine
 curve $U_{\tilde{f}}$ with geometric points:
$$
U_{\tilde{f}}(\overline{\Fp_q})=\{c\in\overline{\Fp_q}\colon 
(c-t)\tilde{f}(t)\in{\mathcal F_d}(\overline{\Fp_q})\}\,.
$$
If $c\in U_{\tilde{f}}(\Fp_q)$ we denote by $E_c$ (resp. $(\gamma_{1,j}(c))_{1\leq j\leq N_{\rm red}}$)
 the quadratic twist of $E$ by $f$
 (resp. the multiset of inverse roots of its reduced $L$-function, the degree of which we denote $N_{\rm red}$) where $f(t)=(c-t)\tilde{f}(t)$. 

For this subfamily of twists we can now state a sample result of ``generic'' 
linear independence of inverse roots in the case $k=1$ (i.e.~
only $L(E_c/K,T)$ is considered).

\begin{proposition}\label{KoProp1.1}
 With notation as above, there exists integers $d_0(E)$, $q_0(E)$ 
 depending only on $E$ such that for any $\deg L(E_c/K,T):=N\geq 5$, any
 $d\geq d_0(E)$, and any $q\geq q_0(E)$, the set of relations
between zeros of the reduced $L$-functions $L_{\rm red}$ satisfies:
$$
\#\left\{c\in U_{\tilde{f}}(\Fp_q)\colon 
{\rm Rel}\left(\left(\gamma_{1,j}(c)\right)_{1\leq j\leq N_{\rm red}}\right)
\text{ is nontrivial }\right\}\ll N^2q^{1-\gamma^{-1}}\log q\,,
$$
where the  implied constant depends only on the $j$ invariant of $E$ and, 
in a controlled way, on the genus $g$ of $C/\Fp_q$, and where one can choose 
$2\gamma =7N^2-7N+4$.
\end{proposition}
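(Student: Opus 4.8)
The plan is to follow the Girstmair-type strategy announced in the introduction: reduce the statement ``$\mathrm{Rel}((\gamma_{1,j}(c)))$ is nontrivial'' to a statement about the Galois group of the splitting field of $L_{\mathrm{red}}(E_c/K,T)$ over $\Qq$, and then count the bad $c$'s via a large sieve for Frobenius in the $1$-parameter family $U_{\tilde f}$. Concretely, the first step is to recall (from Section~\ref{section:Galois}) that if the normalized inverse roots $e^{\ii\theta_{1,j}(c)}$ behave like the eigenvalues of a random element of the full group of signed permutations $W=(\Zz/2\Zz)\wr S_{N_{\mathrm{red}}/2}$ — that is, if the Galois group of the degree-$N_{\mathrm{red}}$ factor attached to $E_c$ over $\Qq$ is as large as the functional equation~\eqref{norm-func-eq} permits — then the only multiplicative relations among the $\theta_{1,j}(c)$ are the trivial ones coming from~\eqref{func-eq}. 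So the set we must bound is contained in the set of $c\in U_{\tilde f}(\Fp_q)$ for which this ``$\Qq$-Galois group'' is a proper subgroup of $W$, i.e.\ the set of $c$ where some auxiliary ``resolvent'' polynomial (built to detect each maximal subgroup, or rather the relevant obstruction in the character lattice) acquires an unexpected factorization or an unexpected rational root.

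Next I would set up the family arithmetically: as $c$ ranges over $U_{\tilde f}(\Fp_q)$, the curve $E_c$ is the quadratic twist of the fixed $E/K$ by $f=(c-t)\tilde f(t)$, and by~\eqref{eq:local-twist} its $L$-function is that of $\rho_{\ell,E/K}\otimes\chi_f$; Katz's theorems (\cite{Kat02},~\cite{Kat05}) give that the geometric monodromy group of the corresponding lisse sheaf on $U_{\tilde f}$ (or rather on the full parameter space $\mathcal F_d$, restricted suitably) is the full symplectic/orthogonal group forcing the big monodromy needed. The point is that big $\ell$-adic monodromy translates, via equidistribution of Frobenius conjugacy classes $\Frob_c$ in the monodromy group, into the statement that ``most'' $c$ give a characteristic polynomial $L_u(E_c/K,T)$ whose Galois group over $\Qq$ is all of $W$ — and the exceptional $c$ are exactly those where $\Frob_c$ avoids the generating conjugacy classes. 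The large sieve developed in Section~\ref{section:LS} then bounds the number of such exceptional $c$: one feeds in a positive-density family of sieving primes (places of $\Fp_q(c)$, equivalently monic irreducibles in $\Fp_q[c]$ of bounded degree) together with the lower bound on the density, inside $W$, of the union of conjugacy classes whose characteristic polynomial is squarefree / whose cycle type generates $W$. The explicit constant $2\gamma=7N^2-7N+4$ and the shape $N^2 q^{1-\gamma^{-1}}\log q$ will come out of optimizing the sieve level against the degree of the sieving polynomials, with the $N^2$ accounting for the number of resolvent conditions and the genus-dependence tracked through the conductor bounds of Theorem~\ref{propL}(ii).

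In more detail, the key steps in order are: (1) quote the group-theoretic lemma from Section~\ref{section:Galois} that reduces ``$\mathrm{Rel}$ nontrivial'' to ``$\Gal(\text{splitting field}/\Qq)\subsetneq W$'', hence to finitely many ``mod-$p$'' reduction-type conditions detectable by factorization patterns of $L_u(E_c/K,T)$ over residue fields; (2) realize the whole family as a single lisse sheaf on $U_{\tilde f}$ (or on an open subscheme of $\mathbf A^1_{\tilde f}$) so that for an auxiliary place $w$ the Frobenius $\Frob_{w,c}$ acts and its characteristic polynomial is $L_u$ evaluated at the specialization; (3) invoke Katz's big-monodromy result to identify the geometric and arithmetic monodromy groups; (4) apply the effective large sieve of Section~\ref{section:LS}, with sieving set the primes of small degree and local conditions ``$\Frob_{w,c}$ lies outside the good conjugacy classes of $W$'', producing the bound with the stated $\gamma$; (5) finally check the hypotheses $N\geq5$, $d\geq d_0(E)$, $q\geq q_0(E)$ are exactly what is needed for Katz's theorem to apply and for the sieve to be nontrivial (e.g.\ enough sieving primes, $W$ large enough that the relevant conjugacy-class density is bounded below). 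The main obstacle I expect is step (1) combined with step (4): one must identify, among all proper subgroups of the hyperoctahedral group $W$, precisely which ones can actually produce a nontrivial relation among the arguments $\theta_{1,j}$ — the functional equation already forces passage to the signed-permutation group rather than the full symmetric group — and then exhibit for each such subgroup a conjugacy-invariant condition (a resolvent or a constraint on the factorization type over $\Fp_q$) that is both detectable by Frobenius and has controllably small density, so that the union over all $O(N^2)$ conditions still sieves out only $\ll N^2 q^{1-\gamma^{-1}}\log q$ values of $c$; keeping the dependence on $N$, $q$, and $g$ completely explicit through this combinatorial bookkeeping is where the real work lies.
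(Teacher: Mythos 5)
Your high-level architecture is right (Girstmair-type reduction ``nontrivial relations $\Rightarrow$ non-maximal $\Qq$-Galois group'', then bound the non-maximal locus by a large sieve for Frobenius), but the route you propose for the sieve step would not deliver the explicit $N^2$ factor and the controlled genus dependence, which is the whole point of this proposition. You suggest to invoke \emph{Katz's} big-monodromy theorems (Theorems~\ref{KatzBigMonodromy},~\ref{KBM2}) and then run the general machinery of Section~\ref{section:LS}. That is the route the paper uses for Theorems~\ref{th:main1} and~\ref{th:main2}, but there the passage from $\ell$-adic big monodromy to big monodromy modulo $\ell$ requires Strong Approximation (Proposition~\ref{prop:monodromymodell}), and this step is not effective in the rank of the sheaf — which is exactly why the implied constants in Theorems~\ref{th:main1} and~\ref{th:main2} depend on $d$ (hence on the degree of the $L$-functions) in an uncontrolled way. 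Proposition~\ref{KoProp1.1} is stated with the explicit constants $2\gamma = 7N^2-7N+4$ and the factor $N^2$ precisely because its proof does \emph{not} go through Katz plus Strong Approximation: it is a two-line deduction from the already-proved Theorem~\ref{th4.3} (that is, \cite[Th.~4.3]{Jou09}), which in turn rests on Hall's explicit big monodromy modulo $\ell$~\cite{Hal08}, where all dependence on $N$ and on the genus is tracked carefully. The paper even flags this distinction in the remark following the proof.

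A secondary, smaller misstep: you say one must ``identify, among all proper subgroups of $W$, precisely which ones can actually produce a nontrivial relation'' and ``exhibit for each such subgroup a conjugacy-invariant condition.'' That is not needed and would be painful. The actual argument is coarser and one-directional: Proposition~\ref{KoProp2.4} shows that if $\Gal_\Qq(L_{\mathrm{red},c})$ contains $W_{N_{\mathrm{red}}}^+$ then $\mathrm{Rel}_\Qq$ is exactly $\mathbf 1\oplus G(M)$, i.e.\ all relations are trivial. So the bad set is \emph{contained} in $\{c:\Gal_\Qq(L_{\mathrm{red},c})\not\supset W_{N_{\mathrm{red}}}^+\}$ together with the $\ll N$ values of $c$ outside $U_{\tilde f}(\Fp_q)$ (zeros of $\tilde f$ and of $m$), and it is this larger set that Theorem~\ref{th4.3} bounds; one never has to enumerate which proper subgroups actually cause relations. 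With that simplification and with the citation to~\cite[Th.~4.3]{Jou09} in place of the Katz--Strong-Approximation--Section~\ref{section:LS} pipeline, the proof closes immediately.
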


We will see how this result can be deduced from 
the third author's result~(\cite[Th.~4.3]{Jou09} which relies in turn on 
a result of Hall~\cite{Hal08}) together with general group theoretic 
arguments. Before presenting our two main results (one of which is a generalization 
of Proposition~\ref{KoProp1.1}) let us give a concrete incarnation of the above statement 
in the case where the base elliptic curve $E/K$ is the Legendre curve.
  Let $K=\Fp_q(t)$ be the rational function field over $\Fp_q$. We call \emph{Legendre elliptic 
  curve} the curve $E_{\mathcal L}$ given by the Weierstrass equation
  $$
 y^2=x(x-1)(x-t)\,.
  $$
  
  Let $\mathcal F_{\mathcal L,d}$ be the corresponding twisting space~\eqref{twisting-space}. 
  For any field 
  extension $\Fp/\Fp_q$ the set of $\Fp$-rational points of this affine variety is
  $$
  \mathcal F_{\mathcal L,d}(\Fp)=\left\{P\in \Fp[t]\colon P\text{ squarefree, }\deg P=d,
  \, {\rm gcd}\left(P,t(t-1)\right)=1\right\}\,.
  $$
  As recalled in~\cite[(9)]{Jou09} (see the references therein for a proof) we have for any quadratic twist 
  $E_{\mathcal L,f}$ of $E_{\mathcal L}$ by $f\in \mathcal F_{\mathcal L,d}(\Fp_q)$:
  $$
  N:=\deg L(E_{\mathcal L,f}/K,T)=\begin{cases} 2d & \text{ if $d$ is even, }\\ 
                                                             2d-1& \text{ if $d$ is odd, } \end{cases}\,
  $$
  which is an integer independent of $f$, 
  as expected.
  
  Let us fix an integer $d\geq 3$ and an $\Fp_p$-rational element 
  $\tilde{f}\in \mathcal F_{\mathcal L,d-1}(\Fp_p)$.
   An immediate consequence of Proposition~\ref{KoProp1.1}
    combined with~\cite[Th.~$4.7$]{Jou09} is the following.

   \begin{corollary}\label{cor:Legendre-twists}
   With notation as in Proposition~\ref{KoProp1.1} one has for any $d\geq 3$ and any power 
   $q$ of $p$:
  $$
   \#\{c\in\Fp_q\colon\, \tilde{f}(c)\neq 0,\, c\neq 0,1, 
   {\rm Rel}\left(\left(\gamma_{1,j}(c)\right)_{1\leq j\leq N_{\rm red}}\right) \text{ is nontrivial } \}
   \ll d^2 2^{n_{\tilde{f}}} q^{1-\gamma^{-1}}\log q\,,
   $$
   with an absolute implied constant, where $n_{\tilde{f}}$ is a non-negative integer depending only on 
   $\tilde{f}$, and 
   where we can choose $2\gamma=7N^2-7N+4$.
   \end{corollary}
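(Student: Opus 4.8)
The plan is to specialize Proposition~\ref{KoProp1.1} to $E=E_{\mathcal L}$ and to combine it with the unconditional Legendre-specific input \cite[Th.~4.7]{Jou09}. First I would check that $E_{\mathcal L}$ satisfies all the hypotheses in sight: here $C=\Pp^1$ so the genus is $g=0$, the $j$-invariant $256(t^2-t+1)^3/(t^2(t-1)^2)$ is non-constant, and the discriminant $16\,t^2(t-1)^2$ exhibits multiplicative reduction at $t=0$ and $t=1$, so one may take the polynomial $m$ in \eqref{twisting-space} to be $t(t-1)$ --- which is exactly the choice built into the definition of $\mathcal F_{\mathcal L,d}$. Next I would identify the set counted in the corollary with $U_{\tilde f}(\Fp_q)$: since $\tilde f\in\mathcal F_{\mathcal L,d-1}(\Fp_p)\subseteq\mathcal F_{\mathcal L,d-1}(\Fp_q)$ is squarefree of degree $d-1$ and prime to $t(t-1)$, the polynomial $f=(c-t)\tilde f(t)$ lies in $\mathcal F_{\mathcal L,d}(\Fp_q)$ precisely when $\tilde f(c)\neq0$ (so $c-t$ is prime to $\tilde f$) and $c\neq 0,1$ (so $c-t$ is prime to $t(t-1)$); moreover $\deg L(E_{\mathcal L,f}/K,T)=N$ with $N\in\{2d,2d-1\}$, so $N\geq 5$ as soon as $d\geq 3$ and $N^2\ll d^2$ throughout. (This is the case $k=m=1$, i.e.\ only $L(E_c/K,T)$ itself is in play.)

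With these identifications, for $d\geq d_0(E_{\mathcal L})$ and $q\geq q_0(E_{\mathcal L})$ Proposition~\ref{KoProp1.1} already gives $\#\{c\in U_{\tilde f}(\Fp_q)\colon {\rm Rel}((\gamma_{1,j}(c))_{1\leq j\leq N_{\rm red}})\text{ is nontrivial}\}\ll N^2 q^{1-\gamma^{-1}}\log q\ll d^2 q^{1-\gamma^{-1}}\log q$, with an absolute implied constant: for the fixed curve $E_{\mathcal L}$ the constant of Proposition~\ref{KoProp1.1} depends only on its (now fixed) $j$-invariant and, in a controlled way, on $g=0$. What remains is to extend the range to all $d\geq 3$ and all powers $q$ of $p$ --- in particular to the finitely many small pairs $(d,q)$ not covered above --- and to exhibit the explicit dependence on $\tilde f$; this is exactly what \cite[Th.~4.7]{Jou09} provides. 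That theorem treats the one-parameter Legendre twist family $U_{\tilde f}$ unconditionally and for every power $q$ of $p$, bounding the number of $c\in U_{\tilde f}(\Fp_q)$ for which the Frobenius conjugacy class $\Frob_c$ in the monodromy group attached to $U_{\tilde f}$ falls into a prescribed conjugation-invariant subset that the large sieve sees as negligible, by a quantity of the shape $\ll d^2\,2^{n_{\tilde f}}\,q^{1-\gamma^{-1}}\log q$; the factor $2^{n_{\tilde f}}$ is the price of letting $\tilde f$ be arbitrary rather than generic and measures the gap between the geometric and the arithmetic monodromy of the relevant lisse sheaf on $U_{\tilde f}$ (equivalently, the bad places introduced by $\tilde f$). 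Since the proof of Proposition~\ref{KoProp1.1} shows, through Girstmair's approach (carried out in Section~\ref{section:Galois}) and a lemma about the relevant Weyl group, that ``${\rm Rel}((\gamma_{1,j}(c))_{1\leq j\leq N_{\rm red}})$ is nontrivial'' forces $\Frob_c$ into precisely such a non-generic subset, applying \cite[Th.~4.7]{Jou09} to this subset yields the corollary, with the same exponent and $2\gamma=7N^2-7N+4$.

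The step I expect to require the most care is not any new idea but the matching of exceptional loci and the bookkeeping of constants and ranges. Concretely: one must be sure that passing to the single fixed curve $E_{\mathcal L}$ genuinely turns the implied constant of Proposition~\ref{KoProp1.1} into an absolute one (this is where $g=0$ and the fully explicit monodromy computation for Legendre twists, via Hall~\cite{Hal08} and \cite[Th.~4.7]{Jou09}, are indispensable); that the small cases $3\leq d<d_0(E_{\mathcal L})$ and $q<q_0(E_{\mathcal L})$ are genuinely absorbed by the unconditional \cite[Th.~4.7]{Jou09} and not merely asserted; and that the conjugation-invariant ``bad'' set produced by the group-theoretic reduction underlying Proposition~\ref{KoProp1.1} is of exactly the type to which the large-sieve bound of \cite[Th.~4.7]{Jou09} applies, so that the two inputs compose cleanly without degrading the explicit $d^2\,2^{n_{\tilde f}}$ dependence or the exponent $1-\gamma^{-1}$. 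Granting these compatibility checks, the deduction is, as claimed, immediate.
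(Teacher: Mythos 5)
Your proof is correct and takes essentially the same route the paper intends: the corollary is obtained by feeding the Legendre-specific, fully explicit Galois-group-maximality bound \cite[Th.~4.7]{Jou09} into the Girstmair-type reduction of Section~\ref{section:Galois} (Proposition~\ref{KoProp2.4} applied with $k=1$), exactly as is done for the generic version Theorem~\ref{th4.3} inside the proof of Proposition~\ref{KoProp1.1}. One small caveat on your first paragraph: the implied constant produced by Proposition~\ref{KoProp1.1}/Theorem~\ref{th4.3} does carry a dependence on $\tilde f$ (the paper's phrasing of Proposition~\ref{KoProp1.1} understates this slightly), so that step alone does not give an absolute constant even for fixed $E_{\mathcal L}$ and large $d,q$; the explicit $d^2\,2^{n_{\tilde f}}$ bookkeeping, all admissible $q$, and the range $d\geq 3$ all come solely from \cite[Th.~4.7]{Jou09}, and your second paragraph correctly routes the argument through that input, so the conclusion stands.
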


Interestingly, recent work of Ulmer~\cite{Ulm14} focuses on the quadratic twist of $E_\mathcal L$
 by $-1$ (it is isomorphic to $E_\mathcal L$ in case $-1$ is a square in $K$) and shows that over
  a suitable extension $\tilde{K}/K$ the situation regarding $L$-functions is in sharp contrast 
  with what one might expect when looking at Corollary~\ref{cor:Legendre-twists}. Indeed Ulmer shows 
  in~\cite[Prop.~10.1]{Ulm14} that $L(E_{\mathcal L, -1}/\tilde{K},T)$ is a power of 
  $1-qT$ which means the phenomenon at the exact opposite of linear independence 
  occurs for the quadratic twist 
  $E_{\mathcal L, -1}/\tilde{K}$.

One of our main goals is to generalize Proposition~\ref{KoProp1.1} in two different ways. First we 
no longer restrict to a parameter variety of dimension $1$ but we consider quadratic twists 
by any $f\in \mathcal F_d(\Fp_q)$. Also we obtain a result of linear independence for the inverse 
zeros of an arbitrary (finite) number of \emph{odd} symmetric power $L$-functions of twists $E_f$ 
at once. The result is as follows.

\begin{theorem}\label{th:main1}
 Let $K=\Fp_q(C)$ be the function field of a smooth geometrically irreducible curve 
 $C/\Fp_q$. Let $E/K$ be an elliptic curve with non-constant 
$j$-invariant and whose minimal Weierstrass model $\mathcal E\rightarrow C$ has at least one 
fiber of multiplicative reduction. 
 If $f\in\mathcal{F}_d(\Fp_{q^n})$, let $\nu_{m}$ be the degree (depending only on $q$ 
  and $\deg f=d$) of $L(\Sym^m E_f/K,T)$, the $m$-th symmetric
  power $L$-function of the twist $E_f$ of $E$ over $K$. As before let 
  $(\gamma_{m,j}(f))_{1\leq j\leq \nu_m}$ be the set of inverse roots 
  of $L(\Sym^m E_f/K,T)$ (seen as a $\Qq$-polynomial of degree $\nu_m$) ordered as in~\eqref{eq:triv}. 
  Let $k\geq 1$ be a fixed integer.
  Then for all $p$ bigger than a constant depending only on $d$ and $k$, for all 
  big enough $p$-power 
  $q:=p^n$ (precisely $n$ is bigger than a constant depending only on 
  $\overline{\mathcal{F}_d}:=\mathcal F_d\times\overline{\Fp_p}$) and for all $d$ bigger than an absolute constant,
  $$
\#\left\{f\in \mathcal F_d(\Fp_q)\colon \,{\rm Rel}\left(\left(\gamma_{2m-1,j}(f)\right)
_{\substack{1\leq j\leq \nu_{2m-1,{\rm red}}\\ 1\leq m\leq k}}\right)
\text{ is nontrivial }\right\}\ll q^{d+1-\gamma^{-1}}\log q\,,
$$
where one can take $2\gamma=4+7\sum_{m=1}^k\nu_{2m-1}(\nu_{2m-1}-1)$ and where 
the implied constant depends only on $d$ and $k$.
\end{theorem}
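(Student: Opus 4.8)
The plan is to reduce the statement about multiplicative relations among the inverse zeros to a statement about Galois groups and conjugacy classes, and then to estimate the number of ``bad'' parameters $f$ by a large sieve argument, feeding in Katz's big monodromy results. Concretely, following the Girstmair-type translation announced for Section~\ref{section:Galois}, the existence of a nontrivial relation in ${\rm Rel}\left(\left(\gamma_{2m-1,j}(f)\right)_{j,m}\right)$ should be controlled by the splitting field over $\Qq$ of the product $\prod_{m=1}^k L_{\rm red}(\Sym^{2m-1}E_f/K,T)$: when the Galois group of this splitting field is as large as the functional equation allows (the relevant Weyl group of type $C$, or a product of such over $m$), the only relations are the trivial ones. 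So the first step is to isolate a ``good event'' on the Frobenius conjugacy classes — essentially that the characteristic polynomials $L_{\rm red}(\Sym^{2m-1}E_f/K,T)$ have the generic Galois group — and to show that its complement is exactly the set being counted, up to the trivial relations already accounted for by~\eqref{norm-func-eq}.

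The second step is the geometric/monodromy input. Katz's theorems (\cite{Kat02}, \cite{Kat05}) give that the monodromy group of the family $f\mapsto L(\Sym^{2m-1}E_f/K,T)$ over $\mathcal F_d$ is as big as possible, namely the full symplectic group $\Sp_{\nu_{2m-1}}$ (with the reduced versions accounting for the imposed zeros at $\pm q^{-(m+1)/2}$), and moreover that the monodromy of the product family over the $k$ symmetric powers is the product of these symplectic groups. This is what guarantees that the Frobenius conjugacy classes $\Frob_f$ equidistribute, so that the ``bad'' event — where the characteristic polynomial fails to have the generic Galois image — is a proper subvariety-type condition, occupying a small proportion of $\mathcal F_d(\Fp_q)$.

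The third step is the quantitative counting, which is the technical heart and is deferred to the large sieve statements of Section~\ref{section:LS}. One sets up a large sieve over the family $\mathcal F_d$: the parameter $f$ ranges over an affine space of dimension $d+1$, so the trivial count is $q^{d+1}$; one sieves out those $f$ for which $\Frob_f$ lands in the union over the nongeneric relations of the corresponding (proper, conjugation-invariant) subsets of $\prod_m \Sp_{\nu_{2m-1}}$. The large sieve inequality, combined with the big monodromy and Deligne's equidistribution (purity, Theorem~\ref{propL}(iii)), then yields a bound of the shape $q^{d+1-\gamma^{-1}}\log q$, where the saving exponent $\gamma$ is governed by the number of sieving conditions, which in turn is controlled by the dimensions $\nu_{2m-1}$ of the representations — giving precisely $2\gamma = 4 + 7\sum_{m=1}^k \nu_{2m-1}(\nu_{2m-1}-1)$. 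The constants $d_0$, $p_0$ (here absorbed into ``$d$ bigger than an absolute constant'' and ``$p$ bigger than a constant depending on $d$ and $k$''), and the lower bound on $n$, come from the requirements that Katz's monodromy computations apply, that $p$ avoid finitely many bad primes for the symmetric powers and for semisimplicity, and that the field of constants be large enough for the relevant equidistribution to be effective.

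The main obstacle I expect is twofold. First, on the group-theoretic side, one must verify that big monodromy really does force triviality of ${\rm Rel}$ simultaneously across the $k$ distinct symmetric powers — this requires knowing not only that each individual Galois group is the full Weyl group but that there are no ``accidental'' multiplicative relations linking zeros of $L_{\rm red}(\Sym^{2m-1}E_f/K,T)$ for different $m$, which is where the restriction to \emph{odd} symmetric powers (so that every factor is symplectic-type, with the same flavour of functional equation) is essential; the product structure of the monodromy group must be established, not just the factors. Second, on the analytic side, making the large sieve uniform in $d$ and $k$ while tracking the dependence of all implied constants purely on $d$ and $k$ — and correctly bookkeeping the contribution of the imposed zeros and the passage from $L_u$ to $L_{\rm red}$ in~\eqref{red-char} — is delicate, and is exactly why Section~\ref{section:LS} is described as the technical core.
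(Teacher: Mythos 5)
Your overall skeleton — translate nontrivial relations into failure of maximality of a Galois group via the Girstmair--Kowalski argument, then count the bad parameters by a large sieve fed with Katz's big monodromy — is indeed the structure of the paper's proof. But you have a significant error in the geometric input, and it is not cosmetic: it propagates into precisely the technical part of the argument you defer to Section~\ref{section:LS}.

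You assert that Katz's theorem gives family monodromy $\Sp_{\nu_{2m-1}}$ for $f\mapsto L(\Sym^{2m-1}E_f/K,T)$ over $\mathcal F_d$, and that the sieving sets live in $\prod_m \Sp_{\nu_{2m-1}}$. This is backwards. The per-twist $L$-function of an \emph{odd} symmetric power of an elliptic curve is symplectically self-dual, yes, but the \emph{family} sheaf $\mathcal T_{d,n}$ on $\mathcal F_d$ is \emph{orthogonally} self-dual when $n$ is odd (and symplectically self-dual when $n$ is even): see Theorem~\ref{KatzBigMonodromy}(2), which gives geometric monodromy $\Oo(N_{d,n})$ for odd $n$ under the multiplicative-reduction hypothesis. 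This confusion is consequential. The orthogonal case is the harder one throughout: $\Oo$ and $\SO$ are not simply connected, so passing from $\ell$-adic to mod-$\ell$ monodromy by Strong Approximation requires lifting to $\mathrm{Spin}$ (handled in \S5.2 via Proposition~\ref{prop:monodromymodell} following Katz/Livn\'e); orthogonal groups over $\Fp_\ell$ in even dimension carry a split/non-split dichotomy forcing control over discriminants modulo $\ell$ (this is exactly why Lemmas~\ref{lem:count} and~\ref{lem:pickLambda1} are needed and why the density of admissible primes is only bounded below by $2^{-k}$); and the exponent in $2\gamma=4+7\sum_m\nu_{2m-1}(\nu_{2m-1}-1)$ is $\nu(\nu-1)$, i.e.\ twice $\dim\Oo(\nu)$, not $\nu(\nu+1)=2\dim\Sp(\nu)$ — so your own formula is inconsistent with the symmetry type you claim.

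A second, related slip: you explain the restriction to odd symmetric powers by saying they are ``all symplectic-type, with the same flavour of functional equation.'' That is not the reason, and it would not even be a valid reason (Theorem~\ref{th:main2} handles a mix of orthogonal and symplectic factors just fine). The real obstruction is Lemma~\ref{lem:operationscommute}: for a quadratic twist $E_f$, one has $L(\Sym^{2m}E_f/K,T)=L(\Sym^{2m}E/K,T)$, independent of $f$, so the even symmetric powers do not form a family at all over $\mathcal F_d$, and LI would be trivially false for them. That is why only the odd powers can be sieved in the twist family.
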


Let us mention that our method cannot be generalized to produce a result
 where even symmetric power $L$-functions are involved 
 (see Lemma~\ref{lem:operationscommute}). Indeed looking
 at~\eqref{eq:local-twist} and~\eqref{eq:LocalSym} it becomes obvious 
 that the local factor at a place of good reduction of, say, the $2m$-th symmetric power 
 $L$-function of a quadratic twist of $E$ coincides with the local factor 
 of the $2m$-th symmetric power $L$-function of $E$ at the same place. 
 In other words we would lose the crucial fact that we consider a \emph{family} 
 of elliptic curves and we would be left with many repetitions of a single
$L$-function in which case LI is trivially false.

 \subsection{A pullback family of elliptic curves}\label{section:pullback}
 
 This family is considered by Katz in~\cite[\S $7.3$]{Kat05}. The elliptic curve we start with 
 is a curve $E/\Fp_q(t)$ (with non-constant $j$-invariant) given, say, by a Weierstrass equation of the form
 \begin{equation}\label{eq:gen-weierstrass}
 E\colon y^2+a_1(t)y+a_3(t)xy=x^3+a_2(t)x^2+a_4(t)x+a_6(t)\,,
 \end{equation}
 where the $a_i(t)$'s are elements of $\Fp_q(t)$. Now given any non-constant function 
 $f\in\Fp_q(C)$ we may form the pullback curve
 $$
 E^f\colon y^2+a_1(f)y+a_3(f)xy=x^3+a_2(f)x^2+a_4(f)x+a_6(f)\,,
 $$
obtained by substituting $t$ by $f$ in the equation defining $E$. This defines 
an elliptic curve $E^f/\Fp_q(C)$. The construction implies deep links 
between the $L$-functions of $E/\Fp_q(t)$ and $E^f/\Fp_q(C)$. More precisely 
Katz explains ~(\cite[$(7.3.9)$]{Kat05}) that for any $n\geq 1$ one has the divisibility 
relation between rational polynomials:
$$
L((\Sym^n E)/\Fp_q(t),T)\mid L((\Sym^n E^f)/\Fp_q(C),T)\,.
$$
Of course such a divisibility relation has to be taken into account when 
studying the potential $\Qq$-linear independence of the inverse zeros of $L((\Sym^n E^f)/\Fp_q(C),T)$
as $f$ varies. Katz defines the \emph{new part} of the symmetric power $L$-function of 
$E^f/\Fp_q(C)$:
$$
L^{\rm new}((\Sym^n E^f)/\Fp_q(C),T):=\frac{L((\Sym^n E^f)/\Fp_q(C),T)}{L((\Sym^n E)/\Fp_q(t),T)}\,.
$$
Relevant to our study is the existence of linear dependence relations among 
the inverse zeros of $L^{\rm new}((\Sym^n E^f)/\Fp_q(C),T)$, or more generally 
of the product over $n$ of such $L$-functions with $1\leq n\leq k$ and $k$ fixed.
To state our main result concerning the above pullback family of elliptic curves 
let us recall the following.

A \emph{divisor} $D$ on $C$ is a formal finite $\Zz$-linear combination of 
rational points 
on $C$. An \emph{effective divisor} is one non negative coefficients. The \emph{degree} 
of a divisor is the sum (in $\Zz$) of its coefficients. To each $f\in \Fp_q(C)$
 one can associate a divisor
$$
(f):=\sum_{P}{\rm ord}_P(f)\cdot P\,,
$$
where the sum is over rational points on $C$ and ${\rm ord}_P(f)$ denotes the natural 
valuation of $f$ at $P$. To any divisor $D$ on $C$ one may attach the Riemann--Roch space
$$
\mathcal L(D):=\{f\in K^\times\colon (f)+D\geq 0\}\cup\{0\}\,.
$$
The Riemann--Roch Theorem asserts that the dimension $\ell(D)$ of $\mathcal L(D)$ is finite.

\begin{theorem}\label{th:main2}
 Let $K=\Fp_q(C)$ be the function field of a smooth geometrically irreducible curve 
 $C/\Fp_q$ of genus $g$. Let $E/K$ be an elliptic curve with non-constant 
$j$-invariant and whose minimal Weierstrass model $\mathcal E\rightarrow C$ has at least one 
fiber of multiplicative reduction. 

Let $D$ be an effective divisor on $C$ of degree at least $2g+3$ and let $U_{D,S}$ 
be the dense open subset of $\mathcal L(D)$
 defined in \S\ref{section:cohgen}. Let $n\geq 1$.
 If $f\in U_{D,S}(\Fp_{q^n})$ let 
  $(\gamma_{m,j}(f)^{\rm new})_{1\leq j\leq \nu_m}$ be the set of inverse roots 
  of $L^{\rm new}((\Sym^n E^f)/\Fp_{q^n}(C),T)$ (seen as a $\Qq$-polynomial of degree 
  $\nu_m$ depending only on $D$ and $q$). Let $k\geq 1$ be a fixed integer.
  Then for all $p$ larger than a constant depending only 
  on $\deg D$ and $k$, for all big enough $p$-power $q:=p^r$ (precisely 
  $r$ has to be bigger than a constant depending only on $D$), and for all $D$ of degree 
  larger than an absolute constant, one has
  $$
\#\left\{f\in U_{D,S}(\Fp_q): {\rm Rel}\left(\left(\gamma_{m,j}(f)^{\rm new}\right)
_{\substack{1\leq j\leq \nu_{m,{\rm red}}\\ 1\leq m\leq k}}\right)
\text{ is nontrivial }\right\}\ll q^{\ell(D)-\gamma^{-1}}\log q\,,
$$
where one can take 
$$
2\gamma=4+7\sum_{j=1}^{k} h(j),\qquad h(j):=\begin{cases} \nu_j(\nu_j-1) & \text{ if } j\text{ is odd}\,, \\
\nu_j(\nu_j+1) &\text{ if } j\text{ is even}\,,
\end{cases}
$$
 and where
the implied constant depends only on $D$ and $k$.
\end{theorem}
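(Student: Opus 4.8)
The plan is to reduce Theorem~\ref{th:main2} to a large-sieve statement for the family $U_{D,S}$, exactly as Proposition~\ref{KoProp1.1} and Theorem~\ref{th:main1} are handled, the only real difference being that here one works with the \emph{new part} of the symmetric power $L$-functions and that one must accommodate both parities of $m$. First I would, following the Girstmair-type translation carried out in Section~\ref{section:Galois}, express the condition ``${\rm Rel}\bigl((\gamma_{m,j}(f)^{\rm new})_{1\le j\le \nu_{m,{\rm red}},\,1\le m\le k}\bigr)$ is nontrivial'' as a \emph{Galois-theoretic} condition on the conjugacy class $\Frob_f$ acting on the relevant compatible system: namely that the image of $\Frob_f$ in the ambient Weyl group (a product of hyperoctahedral groups $W(C_{\nu_{m,{\rm red}}/2})$ for $m$ odd and, for $m$ even, the symmetric-type groups governing $\Sym^{2m}$) fails to land in the ``generic'' conjugacy classes whose eigenangles satisfy no nontrivial multiplicative relation beyond the forced ones. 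By the representation-theoretic analysis of these Weyl groups (the same one underlying Proposition~\ref{KoProp1.1}), the non-generic locus is a proper closed subscheme, in fact a union of subvarieties cut out by a bounded number of nonvanishing conditions, and its ``codimension'' in the monodromy group is at least $1$.

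Next I would invoke Katz's big monodromy theorems for the pullback family: the geometric monodromy group of the middle-extension sheaf attached to $L^{\rm new}(\Sym^n E^f/\Fp_q(C),T)$, as $f$ ranges over $U_{D,S}$, is as large as the functional equation~\eqref{norm-func-eq} permits --- the full orthogonal or symplectic group according to the parity of $n$ and the sign of the functional equation --- and moreover these groups are \emph{independent} across $1\le n\le k$, so that the joint monodromy is the full product. This is precisely the input that makes the open set $U_{D,S}$ the right one (it is defined in \S\ref{section:cohgen} exactly so that cohomological genericity and big monodromy both hold), and it is what forces the hypotheses $\deg D\ge 2g+3$, $p$ large, and $r$ large. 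Granting this, the non-generic locus is detected by a Frobenius conjugacy class avoiding a set of positive ``codimension'' in a product of classical groups, and one is exactly in the situation to which the general large-sieve machinery of Section~\ref{section:LS} applies.

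Then I would feed this into the large sieve for the family $U_{D,S}(\Fp_q) \subset \mathcal L(D)(\Fp_q) \cong \Fp_q^{\ell(D)}$: sieving by places $v$ of $\Fp_q(C)$ of bounded degree, using equidistribution of $\Frob_v$ in the big monodromy group (Deligne's equidistribution, with the explicit error terms that produce the $\log q$ factor and the exponent $\gamma^{-1}$), yields a bound of the shape $q^{\ell(D)}/L$ for the count, where $L$ is the length of the sieve; optimizing $L$ against the error terms and the number of conditions gives the exponent saving with $2\gamma = 4 + 7\sum_{j=1}^k h(j)$. The combinatorial factor $h(j)=\nu_j(\nu_j-1)$ for $j$ odd versus $\nu_j(\nu_j+1)$ for $j$ even reflects the dimensions of the orthogonal versus symplectic monodromy groups (equivalently, the number of independent eigenangle pairs one must control), and the $4$ and the $7$ come from the quantitative inputs of the sieve lemmas of Section~\ref{section:LS} --- I would simply quote those lemmas with the appropriate parameters rather than re-derive the constants.

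The main obstacle, and the step I expect to require genuine care, is the interplay between the \emph{new part} construction and the monodromy/genericity analysis: one must check that passing from $L(\Sym^n E^f/K,T)$ to $L^{\rm new}(\Sym^n E^f/K,T)$ corresponds, on the sheaf side, to quotienting by the constant (pulled-back) subobject coming from $E/\Fp_q(t)$, that this quotient is still a middle extension of the expected rank $\nu_{m,{\rm red}}$ to which Katz's big-monodromy result genuinely applies, and that the ``trivial relations'' one quotients out in the definition of ${\rm Rel}$ match \emph{exactly} the relations forced by the functional equation~\eqref{norm-func-eq} for the new part together with any relations inherited from the removed constant part --- no more and no less. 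Establishing this compatibility (essentially the content of a lemma like Lemma~\ref{lem:operationscommute}, here in the pullback setting and for both parities) is what keeps the non-generic locus of positive codimension and hence what makes the large sieve yield a power saving rather than a trivial bound; everything downstream is then a bookkeeping exercise in the framework already set up in Sections~\ref{section:Galois} and~\ref{section:LS}.
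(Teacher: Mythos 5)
Your proposal correctly identifies the overall architecture (translate linear independence into maximality of a Galois group via the Girstmair-type argument, invoke Katz's big monodromy for the pullback sheaf $\mathcal M_n$, then sieve), and your identification of $h(j)$ with the dimensions of the orthogonal and symplectic groups is right. However, there are two substantive gaps.

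First, you misidentify what the sieve runs over. You propose ``sieving by places $v$ of $\Fp_q(C)$ of bounded degree, using equidistribution of $\Frob_v$.'' That is not the mechanism. The large sieve for Frobenius used in the paper (Theorem~\ref{sieve-statement}, Theorem~\ref{product-sieve-statement}, following Kowalski) sieves over auxiliary primes $\ell$ in a set $\Lambda$ of positive density, not over places of $\Fp_q(C)$. For each $f\in U_{D,S}(\Fp_q)$, one reduces the characteristic polynomial $\det(1-T\Frob_f \mid \mathcal M_n)$ modulo each $\ell$ and detects, via prescribed conjugacy-invariant subsets $\Theta_\ell$ of the monodromy group modulo $\ell$, whether the Galois group over $\Qq$ fails to be maximal. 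The role of Deligne's Riemann Hypothesis is buried in the exponential-sum bounds that control the large-sieve constant, not in any direct equidistribution of Frobenii at places of the curve.

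Second, and more critically, you skip the step where one passes from Katz's $\ell$-adic big monodromy (Theorem~\ref{KBM2}) to big monodromy \emph{modulo $\ell$} for a set of primes $\ell$ of positive density. This does not follow trivially; it requires the Strong Approximation Theorem of Matthews--Vaserstein--Weisfeiler (Proposition~\ref{prop:monodromymodell}), and in the orthogonal case one must go to the spin double cover since $\SO(N)$ is not simply connected. Moreover, to apply the product-sieve statement you need a controlled discriminant for the mod-$\ell$ quadratic forms (split for odd $m$), which is arranged via the density argument of Lemmas~\ref{lem:pnt} and~\ref{lem:pickLambda1}, the joint independence across $m$ via Goursat--Kolchin--Ribet, and the coprimality-with-$p$ condition via Lemma~\ref{lem:pickLambda2}. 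These account precisely for the hypotheses that $p$ and $q$ must be large in terms of $\deg D$ and $k$, which you flag as forced by the construction but without identifying the mechanism.

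Finally, the ``new part'' compatibility you single out as the main obstacle is in fact handled for free by Katz's construction: $\mathcal M_n$ is defined on $U_{D,S}$ so that its Frobenius characteristic polynomial \emph{is} $L^{\rm new}((\Sym^n E^f)/\Fp_q(C),T)$, and the big monodromy is proved directly for $\mathcal M_n$. There is no need to quotient a sheaf for the full $L$-function by a constant subobject or to check that no spurious relations are inherited from the removed constant part; that concern is already resolved at the level of Katz's theorem.
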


 For both the quadratic twist family and the pullback family the strategy of proof of Theorem~\ref{th:main1} and 
 Theorem~\ref{th:main2}
 relies on a representation theoretic interpretation of linear independence relations between 
 the roots. The idea of using the Galois action on the set of relations to study them 
 goes back to Girstmair (see references in~\cite{Kow08b}). The proofs of our results follow these ideas together with a
  sieving procedure as performed by Kowalski in~\cite{Kow08b} (where similar questions of 
  independence of zeros are addressed in the context of algebro-geometric families 
  of hyperelliptic curves over finite fields).

\section{The Galois theoretic approach to independence of the zeros}\label{section:Galois}

Let us now describe the strategy we use to attack 
the general question of linear independence of zeros of $\Qq$-polynomials.

\subsection{The general setup}\label{section:GaloisSetup} 
Fix an 
integer $k\geq 1$ and 
polynomials $P_1,\ldots, P_k$ with coefficients in a field $E$ satisfying $\Qq\subset E\subset \Cc$. 
For each $i\in\{1,\ldots k\}$ 
let $K_i$ be the splitting field of $P_i/\Qq$.  We denote by $M_i$ the set of complex roots of $P_i$ 
and we view 
$G_i:=\Gal (K_i/\Qq)$ as a subgroup of permutations of $M_i$. Assume further that the number 
fields $K_i$ are jointly 
linearly disjoint so that $P:=P_1\cdots P_k$ has splitting field (over $\Qq$) 
with Galois group isomorphic to $G:=G_1\times\cdots \times G_k$. 
Finally let $M$ be the (necessarily disjoint) union of the $M_i$'s and let $F(M)$ be the 
permutation representation of $G$ associated to the action of $G$ on the roots of $P$.

 We are interested in the question of $\Zz$-multiplicative
  independence of the zeros of $P$. Denote by 
$\langle M\rangle$ the multiplicative 
abelian group (or $\Zz$-module) generated by $M$. Set 
$\langle M\rangle_{\Qq}:=\langle M\rangle\otimes_{\Zz}\Qq$ the $\Qq$-vector space 
obtained by extension of scalars.
The vector space $\langle M\rangle_{\Qq}$ is equipped with a $G$-module 
structure (inherited from the Galois action on the roots). More precisely one has a 
$G$-equivariant linear map:
$$
r\colon F(M) \rightarrow \langle M\rangle_{\Qq}\,,
$$ 
with kernel the $G$-module of multiplicative relations 
${\rm Rel}_{\Qq}(M):={\rm Rel}(M)\otimes \Qq$. Recall that we denote, as in~\cite{Kow08b}:
$$
{\rm Rel}(M)=\{(n_\alpha)\in\Zz^M: \prod_{\alpha\in M}\alpha^{n_\alpha}=1\}\,.
$$

Note that it makes more sense when defining ${\rm Rel}(M)$ to assume the elements of $M$ 
have modulus $1$ (it is indeed the case in the application to 
$L$-functions we are interested in since we consider unitarized versions of these $L$-functions).
The crucial point is that if the $G$-module structure of $F(M)$ is known, one can 
hopefully deduce the $G$-module structure of ${\rm Rel}_{\Qq}(M)$.

\subsection{The maximal Galois group of $L$-functions}\label{section:GaloisMax}

 The elliptic curve $L$-functions we are interested in satisfy a functional equation of
  type~\eqref{func-eq}. 
 Besides the (already discussed) fact that this may impose roots, some relations (we have called 
 \emph{trivial relations}) are also imposed. The functional equation~\eqref{func-eq} 
 satisfied by $L((\Sym^m E)/K,T)$ implies multiplicative relations:
\begin{equation}\label{eq:triv}
 \gamma_{m,j}\gamma_{m,j+(\nu_{m,{\rm red}}/2)}=q^{m+1}\,,\qquad 1\leq j\leq \nu_{m,{\rm red}}/2\,,
 \end{equation}
 up to reordering the roots of $L_{\rm red}((\Sym^m E)/K,T)$. Let $g:= \nu_{m,{\rm red}}/2$. 
 Because of the above relations the Galois 
 group of the splitting field of the polynomial $L_{\rm red}((\Sym^m E)/K,T)$ over $\Qq$, seen 
 as a subgroup of the symmetric group $\mathfrak S_{2g}$ on the set of $2g$ symbols 
 $$
 M:=\{-g,\ldots,-1,1,\ldots,g\}\,,
 $$
 embeds in the group $W_{2g}$ defined by either of the following 
 equivalent conditions.
\begin{enumerate}
 \item $W_{2g}$ is the set of permutations of $2g$ letters that commute
 to a given involution $c\in \mathfrak S_{2g}$ acting without fixed points,
\item the group $W_{2g}$ is the subgroup of permutations of $M$ 
acting on pairs $\{i,-i\}$. This group fits the exact sequence:
$$
\begin{CD}
1@>>> \{\pm 1\}^g@>>> W_{2g}@>>> \mathfrak S_{g}@>>> 1\,.
\end{CD}
$$
\item $W_{2g}$ is the Weyl group of the algebraic group ${\Sp}(2g)$, i.e.~
the Weyl group corresponding to the root system of type $C_g$.
\end{enumerate}

 In other words an element $\sigma\in W_{2g}$ permutes the couples $(i,-i)$, $1\leq i\leq g$ 
 and also allows permutations within each couple. 
The latter permutation is called a \emph{sign change}. A subgroup of $W_{2g}$ of 
particular interest is what can be seen 
as its \emph{positive part}: it acts on pairs $\{i,-i\}$ but only allowing evenly many sign changes. 
In other words if one defines a signature homomorphism 
${\rm sgn}: W_{2g}\ra \{\pm 1\}$ by ${\rm sgn}\,(\sigma)=(-1)^{\#\{\text{ sign changes in }\sigma\}}$, 
then one has an exact sequence
$$
\begin{CD}
1@>>> W_{2g}^+@>>> W_{2g}@>{\rm sgn}>> \{\pm 1\} @>>> 1\,.
\end{CD}
$$ 
Conceptually the group $W_{2g}^+$ is the Weyl group of the root system of type $D_g$. 
See~\cite[end of \S1]{Kat12} for useful comments and explanations on the expected 
Galois group in our context.

As explained in Section~\ref{section:GaloisSetup} knowledge of the representation theory 
of the Galois groups of the $L$-functions considered will be crucial. Let us therefore 
state a few important facts about the action of $W_{2g}^+$ on $M\times M$.

\begin{lemma}\label{ActW+}
Assume $g\geq 3$. With notation as above:
\begin{enumerate}
 \item[(i)] there are exactly three orbits in the action of $W_{2g}^+$ on $M\times M$:
$$
\Delta=\{(i,i): i\in M\}\,,\qquad \Delta_c=\{(i,-i): i\in M\}\,,\qquad O=\{(i,j): i,j\in M, i\neq\pm j\}\,;
$$ 
\item[(ii)] let $F(M)$ be the permutation representation space associated to the action of 
$W_{2g}^+$ on $M$. Let $(f_i)_i$ be the associated formal basis. The 
decompostion of $F(M)$ as a direct sum of irreducible representations of $W_{2g}^+$ is 
$$
F(M)={\bf 1}\oplus G(M)\oplus H(M)\,,
$$
where 
\begin{align*}
G(M)&=\left\{\sum_{i\in M}t_\alpha f_i\colon t_i=t_{-i},\, i\in M, 
\text{ and }\sum_{i\in M}t_i=0\right\}\,,\\
H(M)&=\left\{\sum_{i\in M}t_i f_i\colon t_i=-t_{-i},\,i\in M\right\}\,.
\end{align*}
\end{enumerate}
\end{lemma}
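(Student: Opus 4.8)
The plan is to analyze the combinatorial action of $W_{2g}^+$ on $M = \{-g,\ldots,-1,1,\ldots,g\}$ directly, proving both parts by elementary orbit-counting and by exhibiting explicit intertwiners.

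\medskip

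\noindent\emph{Part (i).} First I would check that $\Delta$, $\Delta_c$ and $O$ are each stable under $W_{2g}^+$: any $\sigma\in W_{2g}\supseteq W_{2g}^+$ permutes the pairs $\{i,-i\}$, so it sends a diagonal pair $(i,i)$ to some $(j,j)$, an anti-diagonal pair $(i,-i)$ to some $(j,-j)$, and a ``generic'' pair $(i,j)$ with $i\neq\pm j$ to another such pair. Next I would verify transitivity of $W_{2g}^+$ on each of the three sets. For $\Delta$ this amounts to transitivity of $W_{2g}^+$ on $M$, which follows from the surjection $W_{2g}^+\twoheadrightarrow \mathfrak S_g$ onto the symmetric group on the $g$ pairs together with the fact that, using $g\geq 2$, one can perform a sign change on any single pair at the cost of a compensating sign change on another pair (keeping the total number even) — this lets one reach both $i$ and $-i$. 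The same observation handles $\Delta_c$. For $O$, given $(i,j)$ and $(i',j')$ with $i\neq\pm j$ and $i'\neq\pm j'$: using the $\mathfrak S_g$-action move the underlying pairs of $i,j$ to those of $i',j'$ (possible since they are distinct pairs, and $g\geq 2$), then adjust signs on these two coordinates to match $i',j'$ exactly; any parity defect is absorbed by a sign change on a third pair, which exists because $g\geq 3$ — this is precisely where the hypothesis $g\geq 3$ is used. Finally, $|M\times M| = (2g)^2 = 2g + 2g + (4g^2-4g)$ while $|\Delta|=|\Delta_c|=2g$ and $|O|=4g^2-4g$, so these three orbits exhaust $M\times M$.

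\medskip

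\noindent\emph{Part (ii).} The decomposition of permutation characters: the number of irreducible constituents of $F(M)$ counted with multiplicity-squared equals $\langle \chi_{F(M)},\chi_{F(M)}\rangle$, which by Burnside equals the number of $W_{2g}^+$-orbits on $M\times M$, namely $3$ by part (i). Since $F(M)$ contains the trivial representation $\mathbf 1$ (spanned by $\sum_i f_i$) with multiplicity exactly $1$ (as $W_{2g}^+$ is transitive on $M$, again by (i)), and since $G(M)$ and $H(M)$ as defined are manifestly $W_{2g}^+$-stable subspaces with $F(M) = \mathbf 1 \oplus G(M)\oplus H(M)$ as a vector-space direct sum (dimensions $1 + (g-1) + g = 2g$ check out), it remains only to show $G(M)$ and $H(M)$ are each irreducible and inequivalent. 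Given the three-orbit count, $F(M)$ has exactly three irreducible constituents counted with multiplicity; one is $\mathbf 1$; so the two complementary stable subspaces $G(M)$, $H(M)$ must each be irreducible and mutually non-isomorphic (if either were reducible or if they were isomorphic, the sum of squares of multiplicities would exceed $3$). Alternatively, and more self-containedly, I would prove irreducibility of $H(M)$ by noting it is induced from a character of the stabilizer subgroup and checking the relevant inner product, and similarly for $G(M)$; but the orbit-counting shortcut is cleanest.

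\medskip

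\noindent\emph{Main obstacle.} The only genuinely delicate point is the parity bookkeeping in establishing transitivity on $O$ (and, to a lesser extent, on $\Delta$ and $\Delta_c$): one must be careful that the ``compensating'' sign changes needed to stay inside $W_{2g}^+$ can always be placed on a pair disjoint from the two pairs being matched, which forces $g\geq 3$ for the orbit $O$ and is exactly why the lemma is stated under that hypothesis. Everything else is routine representation theory once the orbit count is in hand.
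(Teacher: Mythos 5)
Your proposal is correct and follows essentially the same route as the paper: part (i) is proved by exhibiting elements of $W_{2g}^+$ realizing the required moves, with the hypothesis $g\geq 3$ entering exactly where you place it (a third pair is needed to absorb an odd number of sign changes when acting on $O$), and part (ii) is the same Burnside computation $\langle\chi,\chi\rangle=3$ combined with the visible vector-space decomposition ${\bf 1}\oplus G(M)\oplus H(M)$. The only cosmetic differences are that you add the stability and cardinality checks explicitly and phrase transitivity via the surjection onto $\mathfrak S_g$, whereas the paper writes down the compensating permutations one by one.
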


\begin{proof}
 Let us start with (i). The fact that $\Delta$ is a single orbit comes from the transitivity of the action of $W_{2g}^+$ on 
 $M$. Next 
pick $(i,-i)$ and $(j,-j)$ in $\Delta_c$ and assume $1\leq i,j \leq g$. 
Obviously the permutation $\sigma\in W_{2g}$ satisfying 
$\sigma(i)=j$ (and thus $\sigma(-i)=-j$) and fixing every other element of $M$ is an 
element of $W_{2g}^+$ since the number of sign changes of $\sigma$ is $0$. 
Now fix an element $k\in M\setminus\{i,j\}$, $1\leq k\leq g$. 
This is possible since $g\geq 3$. Define $\tilde{\sigma}$ to be the permutation of $W_{2g}$
 such that $\tilde{\sigma}(i)=-j$ (and thus $\tilde{\sigma}(j)=-i$), $\tilde{\sigma}(k)=-k$, and 
 that fixes every other element of $M$. Its number of sign changes is $2$ 
therefore $\tilde{\sigma} \in W_{2g}^+$. 

Now we come to $O$. First notice that if $(\alpha,\beta)\in O$, then $(-\alpha,\beta)\in O$ as well. 
To see this, pick $\gamma\in M\setminus\{\pm \alpha,\pm \beta\}$ 
(recall $g\geq 3$) 
and set $\sigma(\alpha)=-\alpha$, $\sigma(\gamma)=-\gamma$ and $\sigma$ commutes with the 
sign change and restricts to identity outside of $\{\pm \alpha,\pm \gamma\}$. 
By construction $\sigma\in W_{2g}^+$ and $\sigma(\alpha,\beta)=(-\alpha,\beta)$. 

Fix an element $y=(i,j)\in O$, with $1\leq j\leq g$, as well as an element $k\in M\setminus\{\pm i\}$. 
Then $(i,k)\in O$. Indeed 
if $1\leq k\leq g$ then the permutation $\sigma\in W_{2g}$ such 
that $\sigma(i)=i$, $\sigma(j)=k$ (therefore $\sigma(-j)=-k$) 
and that fixes every other element of $M$ is in the kernel of ${\rm sgn}$. 
Whereas if $-g\leq k\leq -1$, then define 
$\tilde{\sigma}\in W_{2g}$ to be the permutation satisfying 
$\tilde{\sigma}(j)=k$ (therefore $\tilde{\sigma}(-j)=-k$), $\tilde{\sigma}(i)=-i$, 
and fixing every other element of $M$. 
The number of sign changes of $\tilde{\sigma}$ is two so $\tilde{\sigma}\in W_{2g}^+$. 
One has $\tilde{\sigma}(i,j)=(-i,k)$. By the above 
remark we deduce in turn $(i,k)\in O$. Finally if $-g\leq j\leq -1$
 the same line of reasoning as above applies as well. 

An easy adaptation of the above argument produces for any $k\in M\setminus\{\pm j\}$ a 
permutation $\sigma\in W_{2g}^+$ 
such that $\sigma(y)=(k,j)$. We can now prove that $O$ is a single 
$W_{2g}^+$-orbit: let $(i',j')\in O$. There exists 
$\sigma_1\in W_{2g}^+$ such that $\sigma_1(y)=(i,j')$ (provided $j'\neq \pm i$; 
otherwise $(i,j)$ can first be mapped to $(j,i)$ and then 
to $(i',i)$ or $(i',-i)$ by possibly composing with one extra permutation of $W_{2g}^+$)
 and there exists $\sigma_2\in W_{2g}^+$ such that $\sigma_2\sigma_1(y)=\sigma_2(i,j')=(i',j')$.

\par\medskip
Now we turn to (ii). The three spaces ${\bf 1}$, $G(M)$ and $H(M)$ are clearly $W_{2g}^+$-spaces. 
Let $\chi$ be the character of $F(M)$ as 
a $W_{2g}^+$-representaion. It is enough to show that $\langle \chi,\chi\rangle=3$ to prove (ii).
 Since $\chi$ is real-valued 
one has $\langle\chi,\chi\rangle=\langle\chi^2,{\bf 1}\rangle$ and this last quantity is nothing but
 the number of $W_{2g}^+$-orbits of $M\times M$ which 
we saw is three.
\end{proof}

Let us now assume $g\geq 3$ and let
$\mathcal W_{2g}$ be a group satisfying
$$
W_{2g}^+\subseteq \mathcal W_{2g}\subseteq W_{2g}\,.
$$
Since $[W_{2g}:W_{2g}^+]=2$ this means that either $\mathcal W_{2g}=W_{2g}^+$ or 
$\mathcal W_{2g}=W_{2g}$. An important point is that even though $\mathcal W_{2g}$ is not 
completely determined, its natural permutation representation is. 
Indeed Lemma~\ref{ActW+} and~\cite[Lemma $2.1$]{Kow08b} show that the 
$\mathcal W_{2g}$-module $F(M)$ has the same 
decomposition as a direct sum of irreducible $\mathcal W_{2g}$-modules,
 whichever of the two groups $W_{2g}$, $W_{2g}^+$ the group $\mathcal W_{2g}$ be. 

As a consequence~\cite[Cor.~$2.3$]{Kow08b} holds if one replaces $W_{2g}$ with the 
$k$-fold cartesian product of $\mathcal W_{2g}$. Let us state 
the result in this case.

\begin{corollary}\label{KoCor2.3}
 Let $k\geq 1$ and $g_i\geq 3$  be integers ($1\leq i\leq k$). Let ${\mathcal W}^{(k)}$ 
 be the product $\mathcal W_{2g_1}\times\cdots\times\mathcal W_{2g_k}$ of $k$ 
groups of type $\mathcal W$, (this means that for each $i$, one has
 $W_{2g_i}^+\subseteq \mathcal W_{2g_i}\subseteq  W_{2g_i}$,
 where the $j$-th copy is seen as a permutation group of a set $M_j$). 
 The group ${\mathcal W}^{(k)}$ 
acts naturally on the disjoint union $M$ of the $M_j$'s (its $j$-th factor $\mathcal W_{2g_j}$
 acts trivially on $M_i$ as long as $i\neq j$). 
Let $F(M)$ be the permutation representation corresponding to the action 
of ${\mathcal W}^{(k)}$ on $M$. It is a $(2\sum_i{g_i})$-dimensional 
${\mathcal W}^{(k)}$-module whose decomposition as a direct sum of 
(geometrically) irreducible ${\mathcal W}^{(k)}$-modules is isomorphic to
$$
{\bf 1}\oplus\bigoplus_{1\leq i\leq k}G(M_i)\oplus\bigoplus_{1\leq j\leq k}H(M_j)\,.
$$ 
\end{corollary}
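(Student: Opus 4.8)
The plan is to reduce Corollary~\ref{KoCor2.3} to the single-factor statement of Lemma~\ref{ActW+}(ii) (extended from $W_{2g}^+$ to any $\mathcal W_{2g}$ as noted just before the corollary) by a standard argument about permutation representations of direct products. First I would record the elementary fact that if a finite group $H = H_1\times\cdots\times H_k$ acts on a disjoint union $M = M_1\sqcup\cdots\sqcup M_k$ in such a way that $H_i$ acts on $M_i$ in the given way and trivially on $M_j$ for $j\neq i$, then the permutation module $F(M)$ decomposes as the \emph{internal} direct sum $\bigoplus_{i=1}^k F(M_i)$, where each $F(M_i)$ is regarded as an $H$-module on which the factors $H_j$, $j\neq i$, act trivially. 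This is immediate from the definition of $F(M)$ as the free vector space on $M$: the basis $\{f_x : x\in M\}$ is partitioned into the $\{f_x : x\in M_i\}$, each block is $H$-stable, and the action of $H$ on the $i$-th block factors through the projection $H\to H_i$.

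Next I would invoke the extended form of Lemma~\ref{ActW+}(ii): for each $i$, as a $\mathcal W_{2g_i}$-module one has $F(M_i) = \mathbf 1 \oplus G(M_i)\oplus H(M_i)$, with $G(M_i)$ and $H(M_i)$ (geometrically) irreducible — this is precisely what the paragraph preceding the corollary asserts, combining Lemma~\ref{ActW+} with~\cite[Lemma~2.1]{Kow08b}. Pulling this back along $\mathcal W^{(k)} \to \mathcal W_{2g_i}$, each summand becomes an $\mathcal W^{(k)}$-module; since $\mathbf 1$, $G(M_i)$, $H(M_i)$ are irreducible over $\mathcal W_{2g_i}$ and the other factors act trivially, they remain (geometrically) irreducible over the product $\mathcal W^{(k)}$ — here one uses that inflation along a surjection preserves irreducibility, or equivalently that an irreducible representation of $H_i$, viewed as a representation of $H_1\times\cdots\times H_k$ via the $i$-th projection, is still irreducible (a special case of the classification of irreducibles of a direct product by external tensor products). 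Summing over $i$ and collapsing the $k$ copies of the trivial module to a single $\mathbf 1$ gives
$$
F(M) \;=\; \bigoplus_{i=1}^k F(M_i) \;=\; \mathbf 1 \oplus \bigoplus_{i=1}^k G(M_i)\oplus\bigoplus_{j=1}^k H(M_j)\,,
$$
and the dimension count $\dim F(M) = \sum_i |M_i| = 2\sum_i g_i$ is clear.

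The only genuine subtlety — and the step I would be most careful about — is the word \emph{geometrically} irreducible: one must check that the summands $G(M_i)$, $H(M_i)$ stay irreducible after extension of scalars to an algebraic closure, and that distinct summands are non-isomorphic even geometrically (so that the decomposition is canonical and the multiplicity statement is meaningful). For the single factor this is part of what~\cite[Lemma~2.1]{Kow08b} and Lemma~\ref{ActW+} deliver; for the product it follows because external tensor product of absolutely irreducible modules is absolutely irreducible, and because $\mathbf 1$, $G(M_i)$, $H(M_i)$ are pairwise non-isomorphic as $\mathcal W_{2g_i}$-modules (they have different dimensions, or are distinguished by the sign-change behaviour $t_i = t_{-i}$ versus $t_i = -t_{-i}$), so their inflations to $\mathcal W^{(k)}$ are pairwise non-isomorphic as well — two inflated modules from different factors $i\neq j$ cannot be isomorphic since restricting to $\{1\}\times\cdots\times H_i\times\cdots\times\{1\}$ recovers the original module on one side and a trivial isotypic module on the other. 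Assembling these observations yields the claimed isomorphism, and I would present it in exactly this order: the basis-partition decomposition, the single-factor input, the inflation/irreducibility bookkeeping, and finally the reassembly with the dimension check.
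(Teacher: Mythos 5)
Your proposal is correct and follows essentially the same route as the paper: the paper's proof likewise combines Lemma~\ref{ActW+}, \cite[Lemma~2.1]{Kow08b} (to pass from $W_{2g_i}^+$ to $\mathcal W_{2g_i}$), and the fact that irreducibles of a direct product arise as external tensor products of irreducibles of the factors, which is exactly your inflation argument. Your additional bookkeeping about geometric irreducibility and pairwise non-isomorphism of the summands is a careful expansion of what the paper leaves implicit, not a different method.
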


\begin{proof}
 This is a direct consequence of Lemma~\ref{ActW+}, of~\cite[Lemma $2.1$]{Kow08b}, 
 and of the fact that for any finite groups $G_1$, $G_2$, the direct sum of 
an irreducible $G_1$-module by an irreducible $G_2$-module is an irreducible 
$(G_1\times G_2)$-module.
\end{proof}

Let us finally give the decomposition of ${\rm Rel}_{\Qq}(M)$ as a $G$-module.

\begin{proposition}\label{KoProp2.4}
 We keep the notation as in Corollary~\ref{KoCor2.3}. Let $k\geq 1$ and $g\geq 3$ be integers.
  Let $P_1,\ldots, P_k$ be polynomials 
such that for each $i$ the Galois group of the splitting field of $P_i$
 over $\Qq$ is isomorphic to $\mathcal W_{2g_i}$.  Let $M$ be the union of the roots of the 
 polynomials $P_i$, $1\leq i\leq k$. 
Assume that if $\alpha,\bar{\alpha}$ are 
elements of $M$ such that $(\alpha, \bar{\alpha})$ is an element of the set acted on by 
$\mathcal W^{(k)}$
 then $\alpha\bar{\alpha}\in\Qq^\times$. Then one has
$$
{\rm Rel}_{\Qq}(M)=\bigoplus_{1\leq j\leq k}{\rm Rel}_{\Qq} (M_j)\,.
$$
Moreover if $\alpha\bar{\alpha}$ is independent of $\alpha$ (say, it equals some constant 
$\mu\in\Qq$), 
then for $g\geq 5$ (or $g\geq 3$ 
if $\mu=1$) we have for each $j$:
$$
{\rm Rel}_{\Qq}(M_j)=\begin{cases}
                                           \mathbf{1}\oplus G(M_j) &\text{ if } \mu=1\,,\\
                                            G(M_j) &\text{ otherwise}\,.
                                             \end{cases}
$$
\end{proposition}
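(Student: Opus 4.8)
The plan is to reduce the computation of $\mathrm{Rel}_{\Qq}(M)$ to the already-understood decomposition of $F(M)$ as a $\mathcal W^{(k)}$-module (Corollary~\ref{KoCor2.3}), by exhibiting the kernel of the realization map $r\colon F(M)\to\langle M\rangle_{\Qq}$ as a sub-$\mathcal W^{(k)}$-module and identifying it with an explicit sum of the irreducible constituents $\mathbf 1$, $G(M_j)$, $H(M_j)$. Concretely, I would first observe that $\mathrm{Rel}_{\Qq}(M)=\ker r$ is $\mathcal W^{(k)}$-stable (the Galois action on roots is compatible with multiplication, hence with the multiplicative relation module), so by Corollary~\ref{KoCor2.3} it must be a direct sum of a subcollection of $\mathbf 1$, the $G(M_i)$, and the $H(M_j)$. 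Then the task is purely to decide, for each irreducible summand, whether it lands in the kernel of $r$ or maps isomorphically onto a subspace of $\langle M\rangle_{\Qq}$.

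For the first assertion (that $\mathrm{Rel}_{\Qq}(M)$ splits as $\bigoplus_j\mathrm{Rel}_{\Qq}(M_j)$ under the sole hypothesis $\alpha\bar\alpha\in\Qq^\times$), I would argue that the only cross-factor interactions could come through the copies of the trivial representation or through Schur's lemma forcing $r$ to act diagonally on isomorphic constituents; since the $G(M_i)$ and $H(M_j)$ are pairwise non-isomorphic across different $i,j$, any relation is a sum of relations internal to each $M_j$, plus possibly a global scalar relation coming from $\mathbf 1$. The latter is killed precisely because $\prod_{\alpha\in M_j}\alpha\in\Qq^\times$ already within each factor (it is, up to sign, the product of the $\alpha\bar\alpha$'s, hence a power of $q$), so the trivial-representation part of a relation can be written factor-by-factor and absorbed into the corresponding $\mathrm{Rel}_{\Qq}(M_j)$; this gives the claimed direct sum decomposition. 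This part is essentially bookkeeping once one has the multiplicativity of the Galois action and the pairwise non-isomorphism of the non-trivial constituents.

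For the refined second assertion, I would work one factor at a time and analyze $r$ restricted to $F(M_j)=\mathbf 1\oplus G(M_j)\oplus H(M_j)$. The relation $\alpha\bar\alpha=\mu$ for all pairs says exactly that the element of $F(M_j)$ supported with coefficient $1$ on $\alpha$ and $1$ on $\bar\alpha$ maps under $r$ to $\mu$; summing over a set of representatives, the vector $\sum_i(f_i+f_{-i})$ maps to $\mu^{g}$. When $\mu=1$ this shows the trivial line $\mathbf 1$ lies in $\ker r$, and moreover $G(M_j)$ (whose elements satisfy $t_i=t_{-i}$, $\sum t_i=0$) also lies in $\ker r$ because any such vector is a $\Zz$-combination of differences $(f_i+f_{-i})-(f_{i'}+f_{-i'})$, each mapping to $\mu/\mu=1$; meanwhile $H(M_j)$ injects because the roots are algebraic integers of absolute value $1$ that are not all roots of unity in any nontrivial multiplicative combination with signs $t_i=-t_{-i}$ — here one invokes that the splitting field has the full group $\mathcal W_{2g}$ as Galois group, so no extra relations beyond those forced by the functional equation can occur (this is where the hypothesis $g\ge3$, or $g\ge5$ when $\mu\ne1$, enters, via the irreducibility statements of Lemma~\ref{ActW+} and the rank computation as in~\cite[Cor.~2.3]{Kow08b}). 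When $\mu\ne1$, the line $\mathbf 1$ no longer lies in $\ker r$ (it maps to $\mu^{g}\ne1$ in $\Qq^\times$, which is torsion-free apart from $\pm1$, so one needs $g$ large enough — this is the role of $g\ge5$ — to rule out $\mu^{g}=1$), but $G(M_j)$ still does by the same difference argument, giving $\mathrm{Rel}_{\Qq}(M_j)=G(M_j)$.

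The main obstacle I expect is the injectivity of $r$ on $H(M_j)$ (and the non-vanishing of $r$ on $\mathbf 1$ when $\mu\ne1$): one must show that apart from the trivial relations and the scalar relation $\alpha\bar\alpha=\mu$, there are genuinely no multiplicative relations among the roots, i.e. that $\mathrm{Rel}_{\Qq}(M_j)$ is no bigger than $\mathbf 1\oplus G(M_j)$ (resp. $G(M_j)$). Since $\mathrm{Rel}_{\Qq}(M_j)$ is a $\mathcal W_{2g_j}$-submodule of $F(M_j)$ and the only larger candidate is $F(M_j)$ itself (which would force every root to be a root of unity), the point is to exclude $H(M_j)\subseteq\mathrm{Rel}_{\Qq}(M_j)$. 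I would handle this exactly as in~\cite[proof of Cor.~2.3]{Kow08b}: the image $r(F(M_j))\subseteq\langle M\rangle_{\Qq}$ has $\Qq$-dimension equal to the number of multiplicatively independent roots, and a direct argument (or the cited lemma) shows this dimension is at least $g_j+1$ when $\mu=1$ and at least $g_j$ when $\mu\ne1$ provided $g_j\ge5$, which is exactly what is needed to force $H(M_j)$ out of the kernel. The threshold $g\ge5$ in the $\mu\ne1$ case is precisely the arithmetic room needed so that $\mu^{g}\ne1$ cannot accidentally produce an extra relation; everything else is formal consequence of the representation-theoretic decomposition already established.
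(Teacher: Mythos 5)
Your outline of the setup is right: $\mathrm{Rel}_\Qq(M)=\ker r$ is a $\mathcal W^{(k)}$-submodule, Corollary~\ref{KoCor2.3} forces it to be a direct sum of a subcollection of $\mathbf 1$, the $G(M_i)$, and the $H(M_j)$, and the easy inclusions $\mathbf 1\subseteq\ker r$ (when $\mu=1$) and $G(M_j)\subseteq\ker r$ follow directly from $\alpha\bar\alpha=\mu$. The first assertion (splitting across factors) is also handled adequately, since the non-trivial constituents $G(M_i),H(M_j)$ are pairwise non-isomorphic across factors and any cross-factor relation can only sit in the trivial isotypic piece, which you absorb factor-by-factor.

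However, the crucial step --- excluding $H(M_j)$ from $\ker r$ --- is where the proposal has a real gap, and your explanation of the $g\geq5$ threshold is incorrect. You write that the image of $F(M_j)$ under $r$ ``has $\Qq$-dimension at least $g_j+1$ (resp. $g_j$)'' and cite this as what forces $H(M_j)$ out of the kernel, but this dimension bound is \emph{equivalent} to the exclusion you are trying to prove, so as stated the argument is circular. You also attribute the threshold $g\geq 5$ to the requirement $\mu^g\neq1$; this is wrong on two counts: (a) that condition governs the $\mathbf 1$-summand, not $H(M_j)$, and (b) $\mu\in\Qq^\times$ with $\mu\neq\pm1$ is already non-torsion, so $\mu^g\neq1$ holds for all $g\geq1$, making the threshold $g\geq5$ vacuous for that purpose. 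The actual mechanism in the paper is a group-theoretic one (following Kowalski's Prop.~2.4(1)): if $H(M_j)\subseteq\ker r$ then each $\alpha_i/\alpha_{-i}$ is a root of unity, and combined with $\alpha_i\alpha_{-i}=\mu$ one finds $\alpha_i^2\in\mu\cdot\mmu_\infty$. When $\mu=1$ this makes every root a root of unity, so the splitting field is abelian over $\Qq$; since $W_{2g}^+$ is non-abelian for $g\geq3$, this is impossible. When $\mu\neq1$ the splitting field would be contained in a cyclotomic extension of $\Qq(\sqrt\mu)$, hence solvable over $\Qq$; since $W_{2g}^+$ contains $A_5$ in its composition series (hence is non-solvable) once $g\geq5$, this is again impossible. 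That non-solvability is exactly the reason for the $g\geq5$ threshold, not any arithmetic of $\mu^g$.
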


\begin{proof}
The argument is the same as in~\cite[Prop.~$2.4(1)$]{Kow08b}. In particular, 
to exclude the possibility 
that $H(M_j)$ be a sub-$\mathcal W_{2g_j}$-representation of $F(M_j)$ we appeal to a 
group theoretic argument. If $g\geq 5$ the alternating group on 
five letters appears in the composition series of $W_{2g}^+$ and so 
$W_{2g}^+$ is not solvable. Moreover $W_{2g}^+$ is not abelian if $g\geq 3$.

\end{proof}

\subsection{The key implication and the proof of Proposition~\ref{KoProp1.1}}\label{section:implication}
 
 For any given $m\geq 1$, Proposition~\ref{KoProp2.4} asserts that the trivial
  relations~\eqref{eq:triv} form, after tensoring by 
 $\Qq$, the submodule 
 ${\bf 1}\oplus G(M)$ where $M$ is the set of inverse roots of $L_{\rm red}((\Sym^m E),T)$. Hence 
 we see that linear independence for the inverse roots will 
 follow from the maximality of the Galois group of the splitting field of $L_{\rm red}((\Sym^m E),T)$ 
 over $\Qq$.
 
 More generally the implication we will use to prove our main results is the following. 
 If the Galois group of the splitting field over $\Qq$ of an elliptic curve $L$-function of the 
 type we consider is ``as big as possible'' (i.e.~contains $W_{2g}^+$ where $2g$ is the degree 
 of the associated reduced $L$-function) then this $L$-function will exhibit no nontrivial 
 multiplicative relations among its inverse roots. To give a first illustration 
 of this argument let us prove Proposition~\ref{KoProp1.1}.
 
 \medskip
 Notation being as in Proposition~\ref{KoProp1.1} we use the following
  result~(\cite[Th.~4.3]{Jou09}) about maximality of the Galois group over $\Qq$ 
of the splitting field of $L_{\rm red}(E_f/K,T)$ where $E/K$ is a fixed elliptic curve
 (with non-constant $j$-invariant)
 and the polynomials 
$f$ are obtained by letting $c$ run over $U_{\tilde{f}}(\Fp_q)$.
For any $\Qq$-polynomial $f$ let $\Gal_{\Qq}f$ be the Galois group of the splitting field 
of $f$ over $\Qq$.

\begin{theorem}[\cite{Jou09}] \label{th4.3}
 With notation as in~\S\ref{section:quadtwists} fix an elliptic curve $E/K$ 
 an integer $d\geq 2$
 and a polynomial $\tilde{f}\in\mathcal F_{d-1}(\Fp_q)$. For any $c\in U_{\tilde{f}}(\overline{\Fp_q})$
  let $E_c/K$ (resp. $L_{{\rm red},c}$, $N_{\rm red}$) be the quadratic twist of 
$E$ by $f(t)=(c-t)\tilde{f}(t)$ (resp. its reduced $L$-function, the common degree to all the reduced 
$L$-functions $L_{{\rm red},c}$). If 
$N:= \deg L(E_c/K,T)\geq 5$ (an integer which does not depend on $c$ but only on $d$ and $q$),
 $d\geq d_0(E)$, $q\geq q_0(E)$, then one has:
$$
\#\{c\in U_{\tilde{f}}(\Fp_q): \Gal_{\Qq}(L_{{\rm red},c})\not\supset W_{N_{\rm red}}^+\}\ll 
N^2q^{1-\gamma^{-1}}\log q\,,
$$
where the implied constant depends only on $j(E)$ and on $\tilde{f}$,
 where $d_0(E)$ and $q_0(E)$ depend only on $E$, and where one can choose 
$2\gamma =7N^2-7N+4$.
\end{theorem}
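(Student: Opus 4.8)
The plan is to run the Chavdarov--Kowalski strategy for proving generic maximality of Galois groups in families: translate the statement into congruence conditions modulo auxiliary rational primes, control those conditions by Hall's big monodromy theorem together with Deligne's equidistribution, and assemble the count with a large sieve of the type in~\cite{Kow08b}. Throughout set $2g := N_{\rm red} = \deg L_{{\rm red},c}$, and let $P_c(T)\in\Zz[T]$ be a reciprocal polynomial of degree $2g$, a divisor of $L(E_c/K,T)\in 1+T\Zz[T]$, whose inverse roots are the (un-normalized) reduced inverse roots $\gamma_{1,j}(c)$ and whose splitting field over $\Qq$ is the one in the statement. The functional equation~\eqref{norm-func-eq} pairs the inverse roots as $\gamma\leftrightarrow q/\gamma$, so $\Gal_\Qq(P_c)$ embeds in $W_{2g}$; geometrically the relevant self-duality is orthogonal (cup product composed with the symplectic Weil pairing on the rank-$2$ sheaf is symmetric), so the full possible monodromy is the even orthogonal group $\SO(2g)$ with Weyl group $W_{2g}^+$, and we must show $\Gal_\Qq(P_c)\supseteq W_{2g}^+$ off an exceptional set of size $O(N^2q^{1-\gamma^{-1}}\log q)$.

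First I would isolate the group theory. By a generation lemma for the groups $W_{2g}^+$ — the signed analogue of Jordan's theorem that a transitive permutation group containing a transposition and a full cycle is the symmetric group, in the spirit of~\cite[\S2]{Kow08b} — it suffices to exhibit in $\Gal_\Qq(P_c)$ elements realizing a bounded number $r=r(g)$ of explicit signed cycle types: e.g.\ a single negative $g$-cycle (an element of order $2g$ acting transitively on the $2g$ roots) together with an element fixing one pair $\{i,-i\}$ and acting as a negative $(g-1)$-cycle on the rest; $r$ and the shapes depend only on $N$. The bridge to these types is Dedekind's theorem: for any rational prime $\ell$ with $\ell\nmid q$ and $\ell\nmid\disc(P_c)$ the reduction $\bar P_c:=P_c\bmod\ell$ is separable, and $\Gal_\Qq(P_c)$ contains an element whose signed cycle type is read off the factorization of $\bar P_c$ into irreducibles over $\Fp_\ell$ (self-reciprocal irreducible factors of degree $2d$ give negative $d$-cycles, reciprocal pairs of factors of degree $d$ give positive $d$-cycles). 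Hence it is enough to prove: for all but the allowed number of $c$, and for each target type $\tau_i$, there is a prime $\ell$ in a set $\mathcal L$ of size a small power of $q$ with $\bar P_c$ of type $\tau_i$ over $\Fp_\ell$.

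The counting is then the large sieve applied to $\{c\in U_{\tilde f}(\Fp_q)\}$ with sieve support $\mathcal L=\{\ell\le L\}$ and, for each $\ell$, the ``bad'' set the residues mod $\ell$ of $P_c$ realizing none of the prescribed types (including the non-separable ones). To bound the density of the bad set uniformly in $\ell$ one needs the map $c\mapsto (P_c\bmod\ell)$ — equivalently the Frobenius conjugacy class of $c$ in the orthogonal group mod $\ell$ — to equidistribute, and this is exactly Hall's big (orthogonal) monodromy theorem for the quadratic-twist family~\cite{Hal08} (resting on Katz), used through~\cite{Jou09}: the geometric monodromy of the associated middle-extension sheaf on $U_{\tilde f}$ is the full orthogonal group, and its reduction is all of $\SO(2g)(\Fp_\ell)$ (up to the usual $\Oo$/$\SO$ index-two ambiguity) for all $\ell$ outside a bounded set. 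Combined with Deligne's Riemann Hypothesis over finite fields~\cite[\S3]{Del80} in its equidistribution form, one obtains for each $\ell\in\mathcal L$
$$
\#\{c\in U_{\tilde f}(\Fp_q):P_c\bmod\ell\in(\text{bad classes})\}=\frac{\#(\text{bad classes})}{\#\SO(2g)(\Fp_\ell)}\,\#U_{\tilde f}(\Fp_q)+O\!\bigl(B(N)\sqrt q\bigr)\,,
$$
where the proportion of bad classes is $\le 1-c_0/\ell$ for an absolute $c_0>0$ (a positive proportion of orthogonal matrices mod $\ell$ has each prescribed reduction type), and $B(N)$ bounds the Betti numbers and conductor of the auxiliary sheaf, of polynomial size in $N$; the fibre products entering the sieve push this to the quantity recorded by $2\gamma=7N^2-7N+4=7N(N-1)+4$. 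Feeding these into the large sieve inequality over $\mathcal L$ and optimizing $L$ against the error $B(N)\sqrt q$ yields $\#\{c:\Gal_\Qq(L_{{\rm red},c})\not\supset W_{2g}^+\}\ll N^2q^{1-\gamma^{-1}}\log q$ with the stated $\gamma$; the hypotheses $d\ge d_0(E)$, $q\ge q_0(E)$ (and $N\ge 5$) are precisely those under which Hall's theorem and the generation lemma apply.

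The hard part is the geometric input. One needs not merely Zariski-density of the $\ell$-adic monodromy in the orthogonal group, but \emph{bigness modulo every $\ell$} in the sieve range (so the density of bad classes is controlled uniformly), and explicit bounds — polynomial in $N$ — for the conductors of all the auxiliary sheaves, including the fibre powers that appear when several sieve conditions are imposed at once, since it is the interaction of this complexity with the $\sqrt q$ in Deligne's bound that fixes the exponent $1-\gamma^{-1}$. This is exactly what~\cite{Hal08} (through~\cite{Jou09}) supplies; the Dedekind reduction, the group theory, and the formal large sieve bookkeeping are routine once it is in place.
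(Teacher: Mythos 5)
This statement is not proved in the paper: it is quoted verbatim from~\cite{Jou09} (Theorem~4.3 there), and your reconstruction follows essentially the same route as that proof and as the generalization developed in Section~\ref{section:LS} — large sieve for Frobenius over $U_{\tilde f}(\Fp_q)$, sieving sets defined by factorization types of the characteristic polynomial modulo $\ell$, Dedekind's reduction to signed cycle types, a finite list of types forcing $\Gal_{\Qq}(L_{{\rm red},c})\supseteq W_{N_{\rm red}}^+$, and Hall's big orthogonal monodromy modulo $\ell$ as the geometric input; your bookkeeping $2\gamma=7N(N-1)+4$ matches the exponent $A=7d'/2+1$ with $d'=\dim\Oo(N)=N(N-1)/2$ from Theorem~\ref{sieve-statement}. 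The one point you should correct is the claim that the proportion of bad classes is only $\leq 1-c_0/\ell$: for the large sieve to produce a power saving one needs the good (sieving) sets to have density bounded below \emph{independently of $\ell$}, i.e.\ $\#\Theta_\ell/\#G_\ell^{\rm geom}\gg_N 1$ (as in Lemma~\ref{lem:count} and its analogues in~\cite{Jou09}); with density only $c_0/\ell$ the sieve constant $H$ would be of size $\log\log L$ and the bound would degrade to $q/\log\log q$, losing the stated $q^{1-\gamma^{-1}}\log q$. Your own parenthetical ("a positive proportion of orthogonal matrices mod $\ell$ has each prescribed reduction type") is the correct fact; the displayed $1-c_0/\ell$ should read $1-c_0(N)$.
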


Notice 
that $c\in {\Fp_q}\setminus U_{\tilde{f}}(\Fp_q)$ if and only if $c$ is a root of $\tilde{f}$ or a 
root of $m$ (see~\eqref{twisting-space}). An immediate consequence of Theorem~\ref{th4.3} is:
$$
\#\{c\in \Fp_q\colon c\not\in U_{\tilde{f}}(\Fp_q)\text{ or } 
 \Gal_{\Qq}(L_{{\rm red},c}) \not\supset W_{N_{\rm red}^+}\}\ll N^2q^{1-\gamma^{-1}}\log q\,,
$$
with the same dependencies on the implied constant as in Theorem~\ref{th4.3}. 

 Proposition~\ref{KoProp1.1} then follows. Indeed any $c\in \Fp_q$ outside of the set on the left hand side of the inequality corresponds to 
 a $\Qq$-polynomial $L_{{\rm red},c}$ with a $\Zz$-multiplicatively independent set of zeros. 
 To see this fix such a $c\in\Fp_q$ and  apply Proposition~\ref{KoProp2.4} to $k=1$ 
 and $P=L_{{\rm red},c}$ (formally one should rather choose $P=T^{N_{\rm red}}L_{{\rm red},c}(1/T)$ 
 so that the roots are not confused with their inverses, however the set of zeros of $L_{{\rm red},c}$ 
 is stable under inversion). The fact that $\Gal_{\Qq}(L_{{\rm red},c})\simeq \mathcal W_{N_{\rm red}}$ concludes the proof.
 
 \begin{remark}
 We draw the reader's attention to the uniformity aspects of the inequality in 
 Proposition~\ref{KoProp1.1}. Notably we have a control on the dependency on the 
 common degree $N$ of the $L$-functions considered that we do not claim to obtain 
 in the statement of Theorem~\ref{th:main1}. This comes from the fact 
 that the proof of Proposition~\ref{KoProp1.1} relies on Theorem~\ref{th4.3} that builds 
 in turn on a Theorem of Hall~(\cite[Th.~6.3 and Th.~6.4]{Hal08}) where these uniformity 
 issues are handled with care whereas our proof of Theorem~\ref{th:main1} appeals 
 to Strong Approximation where one loses the effectiveness required to keep 
 track of the dependency on the degree of the $L$-functions.
 \end{remark}
  
   As is certainly clear from the way we have proven Proposition~\ref{KoProp1.1} we will 
   deduce our main results from maximality of Galois groups statements generalizing 
   Theorem~\ref{th4.3} (that will have to be adapted to the family of elliptic curves 
   introduced in Section~\ref{section:pullback}). 
   This will be done via a sieving procedure (generalizing the one developed 
   to prove~\cite[Th.~4.3]{Jou09}). A crucial input will be big $\ell$-adic monodromy 
   statements holding both for the families of Section~\ref{section:quadtwists} 
   and Section~\ref{section:pullback}.

\section{Some Large Sieve statements}\label{section:LS}

 We appeal to Kowalski's sieve for Frobenius. This technique is extensively described 
 in~\cite[Chapter $8$]{Kow08a}. Refinements of it are developed and used 
 in~\cite{Jou09}. For the purpose of 
 the present work an even more general sieve statement is needed. This comes first from the fact 
 that several polynomials 
  are to be considered at once (we are interested in the product of finitely many symmetric power 
  $L$-functions of a given elliptic curve) rather than just one (as is the case in~\cite{Jou09}).
  
  We first give a general sieve statement without specifying the property we investigate 
  (i.e.~a statement that holds for any choice of sieving sets in the language of~\cite{Kow08a}).

\begin{theorem}\label{sieve-statement}
 Let $\Fp_q$ be a finite field of $q$ elements and characteristic $p$. Let $V/\Fp_q$ be a smooth 
affine geometrically connected $d$-dimensional variety. Let 
$\kappa: V^{\rm cov}\rightarrow V$ be a Galois \'etale cover
with group $\mathcal G_V$ an elementary $2$-group.
 Assume further we are given a set of primes $\Lambda$ 
of positive density  that does not contain $p$ 
 such that 
for each $\ell\in\Lambda$,  we are given a lisse sheaf $\mathcal T_{d,\ell}$ (of rank denoted $r(d)$) of 
$\Fp_\ell$-vector spaces on $V$ corresponding to a homomorphism:
$$
\rho_\ell: \pi_1(V,\bar{\eta})\rightarrow GL(r(d),\Fp_\ell)\,,
$$
 that can be pulled back to a lisse  
sheaf $\kappa^* \mathcal T_{d,\ell}$ on $V^{\rm cov}$. We still denote by $\rho_\ell$ 
the corresponding representation:
$$
\rho_\ell\colon\pi_1(\overline{V^{\rm cov}},\bar{\mu})\rightarrow GL(r(d),\Fp_\ell)\,,
$$
where $\bar\mu$ is a geometric generic point that $\kappa$ maps to $\bar\eta$.
Set $G_\ell:=\rho_\ell(\pi_1(V,\bar{\eta}))$, 
$G_\ell^{\rm geom}:=\rho_\ell(\pi_1(\overline{V},\bar{\eta}))$ and 
$G_\ell^{{\rm geom},\rm cov}:=\rho_\ell(\pi_1(\overline{V^{\rm cov}},\bar{\mu}))$ and assume
\begin{itemize}
\item the product map
$$
\rho_{\ell,\ell'}: \pi_1(\overline{V^{\rm cov}},\bar{\mu})\rightarrow G_{\ell,\ell'}^{{\rm geom},\rm cov}:=G_\ell^{{\rm geom},\rm cov}\times G_{\ell'}^{{\rm geom},\rm cov}
$$
is onto for each $\ell\neq\ell'\in\Lambda$ (if $\ell=\ell'$ we define $\rho_{\ell,\ell'}:=\rho_{\ell}$),
\item for every $\ell\in\Lambda$ one has $p\nmid \# G_\ell^{{\rm geom},\rm cov}$.
\end{itemize}
 Let $\gamma_0$ be a representative of an element of the abelian 
 quotient $G_\ell/G_\ell^{{\rm geom}}$ (which corresponds to a union of left cosets 
 relative to $G_\ell^{{\rm geom},\rm cov}$)
 such that all the Frobenius conjugacy classes $\Frob_t$, $t\in V(\Fp_q)$
 map to $\gamma_0$ under $\rho_\ell$. Then for any choice of family 
 (indexed by $\Lambda$) of conjugacy invariant subsets
 $\Theta_\ell$ of the left coset $\gamma_0 G_\ell^{\rm geom}$ and any $L\geq 2$, one has:
\begin{equation}\label{eq:sieve}
\#\{t\in V(\Fp_q): \rho_\ell(\Frob_t)\not\in\Theta_\ell \text{ for all }\ell\leq L, \ell\in\Lambda
\}\leq \# \mathcal G_V(q^d+Cq^{d-1/2}(L+1)^A)(\delta(\Lambda)H)^{-1}\,
\end{equation}
where $\delta(\Lambda)$ is the density of $\Lambda$,
$$
H=\sum_{\substack{\ell\leq L\\ \ell\in\Lambda}}\frac{\#\Theta_\ell}{\#G_\ell^{\rm geom}-\#\Theta_\ell}\,,
$$
$C$ is a constant depending only on $\overline{V}$, and $A=7d'/2+1$ 
where $d'$ is the dimension of a connected component 
of maximal dimension 
 of the algebraic group underlying the $G_\ell$'s (i.e.~the algebraic group ${\bf G}/\Fp_\ell$ 
of minimal dimension such that $G_\ell\subseteq {\bf G}(\Fp_\ell)$).
\end{theorem}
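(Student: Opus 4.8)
The plan is to reduce the counting problem on $V(\Fp_q)$ to the counting problem on $V^{\rm cov}(\Fp_q)$, where the sheaves are \emph{geometrically} defined (no arithmetic monodromy obstruction beyond the single coset $\gamma_0$), and then to run Kowalski's large sieve for Frobenius exactly as in~\cite[Chapter~8]{Kow08a} and its refinement in~\cite{Jou09}. First I would note that the covering map $\kappa\colon V^{\rm cov}\to V$ has an elementary $2$-group $\mathcal G_V$ as Galois group, and that $V(\Fp_q)$ is covered by the images under $\kappa$ of the $\Fp_q$-points of the (at most $\#\mathcal G_V$) twists of $V^{\rm cov}$; this is where the factor $\#\mathcal G_V$ in the bound comes from. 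So it suffices to prove, for each such twist $W/\Fp_q$ of $V^{\rm cov}$, a bound of the shape
$$
\#\{t\in W(\Fp_q)\colon \rho_\ell(\Frob_t)\notin\Theta_\ell\text{ for all }\ell\le L,\ \ell\in\Lambda\}\le (q^d+Cq^{d-1/2}(L+1)^A)(\delta(\Lambda)H)^{-1}
$$
and sum over the twists. From now on I work on $W$, which is still smooth affine of dimension $d$ with the same geometric monodromy groups $G_\ell^{{\rm geom},\rm cov}$.

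Next I would set up the sieve data. The ``prime'' set is $\Lambda$; the ``moduli'' are the $\ell\in\Lambda$, $\ell\le L$; the group $G_\ell$ attached to $\ell$ is $\rho_\ell(\pi_1(W,\bar\mu))$, which by hypothesis sits inside the fixed coset $\gamma_0 G_\ell^{{\rm geom},\rm cov}$ of its geometric part (all Frobenii land in $\gamma_0$). The sieving set at $\ell$ is the complement of $\Theta_\ell$ inside that coset. The two key inputs the sieve requires are: (a) a \emph{quasi-orthogonality} / large-sieve inequality, which in this setting is Kowalski's Proposition controlling $\sum_t \prod_\ell (\text{indicator})$ in terms of $q^d$ plus an error governed by the Riemann Hypothesis over finite fields for the relevant sums of products of characters — this is where the main term $q^d$, the error exponent $q^{d-1/2}$, the degree $(L+1)^A$ in the conductor, and the constant $C$ depending only on $\overline W$ (hence only on $\overline V$) all enter; and (b) \emph{independence of the moduli}, i.e.\ that for $\ell\ne\ell'$ the joint representation $\rho_{\ell,\ell'}$ on $\pi_1(\overline W,\bar\mu)$ surjects onto $G_\ell^{{\rm geom},\rm cov}\times G_{\ell'}^{{\rm geom},\rm cov}$ — precisely the first displayed hypothesis — so that the ``sieve is of large dimension'' and the combinatorial sum $H$ of the densities $\#\Theta_\ell/(\#G_\ell^{\rm geom}-\#\Theta_\ell)$ appears in the denominator. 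The condition $p\nmid\#G_\ell^{{\rm geom},\rm cov}$ guarantees that the mod-$\ell$ sheaves are tamely behaved enough for the cohomological bounds (tame at $p$, so the conductor estimates are uniform), and the density $\delta(\Lambda)$ of $\Lambda$ is lost because only primes in $\Lambda$ contribute to $H$ out of the $\sim L$ available.

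With these in place, the argument is the standard one: apply the large-sieve inequality of~\cite[Ch.~8]{Kow08a} (in the ``coset'' version, conjugating everything by $\gamma_0$ so that one sieves inside the geometric group, exactly as is done for symmetry-type $C_g$ or $D_g$ families in~\cite{Jou09}) to the family $(\Theta_\ell)_\ell$; the output is
$$
\#\{t\in W(\Fp_q)\colon \rho_\ell(\Frob_t)\notin\Theta_\ell\ \forall\,\ell\le L,\,\ell\in\Lambda\}\le \frac{q^d+Cq^{d-1/2}(L+1)^A}{\delta(\Lambda)H},
$$
with $A=7d'/2+1$ coming from the bound on the dimensions of the cohomology groups $H^i_c$ of the products of the lisse sheaves in terms of the sum of Betti numbers, which is controlled by the dimension $d'$ of a maximal-dimensional connected component of the ambient algebraic group $\mathbf G/\Fp_\ell$ (here one invokes the Bombieri–Katz–Adolphson–Sperber-type estimates for Betti numbers, as packaged in~\cite{Kow08a}). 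Summing over the $\le\#\mathcal G_V$ twists of $V^{\rm cov}$ gives~\eqref{eq:sieve}. The main obstacle, and the only place where genuine work beyond bookkeeping is needed, is verifying that the hypotheses of Kowalski's large-sieve machinery are met uniformly in $\ell\in\Lambda$ and in $L$ after passing to the cover: namely that the conductors of the relevant tensor-product sheaves on $\overline W$ are bounded polynomially in $L$ with exponent $A=7d'/2+1$ and with a constant depending only on $\overline V$ (not on $\ell$), and that the joint surjectivity hypothesis can indeed be fed into the orthogonality step to produce the factor $H$ with the stated summand $\#\Theta_\ell/(\#G_\ell^{\rm geom}-\#\Theta_\ell)$; tracking the passage from $G_\ell^{{\rm geom},\rm cov}$ on the cover back to $G_\ell^{\rm geom}$ on $V$ in these densities is the delicate point, and is handled by the fact that $\kappa$ is a fixed finite cover so the two differ only by the bounded factor already accounted for by $\#\mathcal G_V$.
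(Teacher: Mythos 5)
Your plan is essentially the one the paper follows: reduce to sieving over a fixed coset, invoke Kowalski's large sieve for Frobenius with the refinements from \cite{Jou09}, and control the exponent $A$ by bounding Betti numbers of the ambient algebraic group. The one place where your presentation genuinely diverges from the paper's is the reduction step. You propose to pass to the $\#\mathcal G_V$ twists $W$ of $V^{\rm cov}$ and sieve directly on $W(\Fp_q)$; the paper does not leave $V$ at all. Instead it partitions $V(\Fp_q)$ into the fibers $X_\alpha=\{t: \tilde\kappa(\Frob_t)=\alpha\}$, $\alpha\in\mathcal G_V$, and applies the \emph{coset sieve} of \cite[\S3.3]{Kow08a} on each $X_\alpha$ using the exact sequence $1\to\pi_1(\overline{V^{\rm cov}},\bar\mu)\to\pi_1(V,\bar\eta)\to\hat\Zz\times\mathcal G_V\to1$, then sums over $\alpha$ to pick up the factor $\#\mathcal G_V$. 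The two reductions produce the same bound, but your version has a hidden cancellation you should flag: $\kappa_W$ is $\#\mathcal G_V$-to-$1$ from $W(\Fp_q)$ onto $X_\alpha$, so summing your per-twist bounds overcounts each $t\in V(\Fp_q)$ by $\#\mathcal G_V$, and it is only after dividing by this multiplicity that you recover the stated inequality; simply writing $\#\{t\in V(\Fp_q):\cdots\}\le\sum_W\#\{w\in W(\Fp_q):\cdots\}$ is valid but then the $\#\mathcal G_V$ in the final bound arises from the overcounting, not from ``summing over twists'' as you say. Two further points the paper makes explicit that your sketch glosses over. First, the concrete technical input is \cite[Cor.~3.7 case (1)]{Jou09}, which is stated for sheaves with orthogonal symmetry; the paper observes that what is actually used there is only Serre's bound $\#U_0(\Fp_\ell)\le(\ell+1)^{\delta}$ for a connected $\delta$-dimensional variety $U_0/\Fp_\ell$, applied to each connected component of the ambient algebraic group, which is what lets the argument cover the product of orthogonal and symplectic groups relevant here and yields $A=7d'/2+1$ with $d'$ the dimension of a maximal-dimensional component. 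Second, the passage between $\#\Theta_\ell/\bigl(\#G_\ell^{\rm geom}-\#\Theta_\ell\bigr)$ in the statement and the version with $G_\ell^{{\rm geom},{\rm cov}}$ appearing in the per-coset sieve is not merely ``absorbed into $\#\mathcal G_V$'' — you would need to say precisely which $\tilde\Theta_\ell$ you feed to the coset sieve and check that the resulting $\tilde H$ dominates the stated $H$, which your sketch leaves as a gesture.
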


\begin{remark}
The assumption that the Galois group $\mathcal G_V$ is an elementary $2$-group is not used in the proof. The reason we leave it as an assumption in the statement is because that condition holds in the context of our study of $L$-functions. Precisely the group $\mathcal G_V$ comes from a product of maximal abelian quotients of orthogonal groups over finite fields.
\end{remark}

\begin{proof}[Proof of Theorem~\ref{sieve-statement}]
First, the sieve statement has to be refined (or restricted) so that only those $t$'s in $V(\Fp_q)$
  such that $\Frob_t$ 
lies in a particular coset of $\pi_1(V,\bar{\eta})$ with respect to $\pi_1(V^{\rm cov},\bar{\eta})$ 
are considered. 
One needs first 
to fix an element $\alpha\in \mathcal G_V$ and sieve for the corresponding Frobenius conjugacy
 classes. Precisely, 
with notation as in the theorem set 
$$
X_\alpha:=\{t\in V(\Fp_q)\colon \tilde{\kappa}(\Frob_t)\in\alpha\}\,,
$$
where $\tilde{\kappa}: \pi_1(V,\bar{\eta})\rightarrow \mathcal G_V$ maps $\Frob_t$ 
to the action of $\pi_1(V,\bar{\eta})$ on $\kappa^{-1}(t)$. 
Then we claim
$$
\#\{t\in X_\alpha\colon \rho_\ell(\Frob_t)\not\in\tilde{\Theta}_\ell 
\text{ for all }\ell\leq L,\, \ell\in\Lambda\}
\leq (q^d+Cq^{d-1/2}(L+1)^A)(\delta(\Lambda)\tilde{H})^{-1}\,
$$
with the same notation and dependencies as in the theorem and where 
$\tilde{\Theta}_\ell$ is a conjugacy invariant subset of the coset of $G_\ell$ 
with respect to $G_\ell^{{\rm geom},{\rm cov}}$ (the quantity $\tilde{H}$ is defined 
the same way as $H$ up to replacing $\Theta_\ell$ (resp.~$G_\ell^{\rm geom}$) by $\tilde{\Theta}_\ell$ 
(resp.~$G_\ell^{{\rm geom},\rm cov}$)). To prove the claim we show that we can apply 
the coset sieve of~\cite[\S $3.3$]{Kow08a} with adjustments as in~\cite{Jou09}. 
Note that in both these papers $\Lambda$ is a set containing all but finitely 
many primes however it is straightforward to adapt the method to a set of primes of positive 
density $\delta(\Lambda)$.
This method is probably best described by considering the commutative diagram

\begin{equation} \label{diagcosetsieve}
 \begin{CD} 
 1 @>>> \pi_1(\overline{V^{\rm cov}},\bar{\mu}) @>>> \pi_1(V,\bar{\eta}) @>(\deg,\tilde{\kappa})>> \hat{\Zz}\times \mathcal G_V @>>> 1\\
@. @VV \rho_\ell V @VV \rho_\ell V @VV{\rm pr}_\ell V\\
1 @>>> G_{\ell}^{{\rm geom},\rm cov} @>>> G_\ell @>>> \Gamma_\ell @>>> 1\,,
\end{CD}
 \end{equation}
 where  ${\rm pr}_\ell$ (resp.~$\Gamma_\ell$) is the group morphism (resp.~the quotient 
 group) that makes the diagram commute. 
 
 In the terminology of~\cite{Kow08a} the coset sieve setting we use 
 is the triple $((-1,\alpha),\Lambda,(\rho_\ell))$ 
 where we see $(-1,\alpha)$ as a coset of $\pi_1(V,\bar{\eta})$ with respect to 
 $\pi_1(\overline{V^{\rm cov}},\bar{\mu})$. The sifted set (again in the sense of~\cite{Kow08a})
  attached is 
 $(X_\alpha,\text{counting measure}, \Frob)$ where $\Frob$ is the map 
 from closed points of $V$ to conjugacy classes of $\pi_1(V,\bar{\eta})$ mapping 
 $t$ to $\Frob_t$. The claim together with the upper bound~\eqref{eq:sieve} then follows 
 by applying~\cite[Cor. $3.7$ case $(1)$]{Jou09}. Note that in \emph{loc.~cit.~}we assume the algebraic group underlying the $G_\ell$'s is an orthogonal group. 
 However what we really need is merely an inequality of type
 $$
 \# U_0(\Fp_\ell)\leq (\ell+1)^\delta\,,
 $$
 where $U_0$ is a connected $\delta$-dimensional variety over $\Fp_\ell$. This 
 is a result due to Serre and we apply it to each connected component of the  
 algebraic group underlying $G_\ell$.

 \end{proof}
 
 We deduce a large sieve estimate involving polynomials of the type 
 we are investigating. In other words we show we can 
 apply Theorem~\ref{sieve-statement} to the concrete case where the property 
 studied is the maximality of the Galois group within a particular family of (characteristic) 
 polynomials. This amounts to specifying the sieving sets $\Theta_\ell$ appearing 
 in the statement of Theorem~\ref{sieve-statement}. Moreover we restrict 
 to finite groups $G_\ell$'s with underlying algebraic group a product 
 of orthogonal and symplectic groups since this will be the case in the applications 
 we have in mind.
 
 Let us briefly recall some useful facts about orthogonal groups over finite fields of characteristic 
 not $2$. 
 Let $\Oo(N,\Fp_\ell)$ be the group of isometries with respect to a non-degenerate 
 symmetric bilinear pairing $\Psi$ on an $N$-dimensional $\Fp_\ell$-vector space $V$. 
 The derived group $\Omega(N,\Fp_\ell)$ of $\Oo(N,\Fp_\ell)$ is 
 the simultaneous kernel of the determinant and of the spinor norm (the group 
 morphism from $\Oo(N,\Fp_\ell)$ to the group of classes of $\Fp_\ell^\times$
 modulo squares mapping a reflection $r_v$ with respect to the orthogonal space 
 of a non-isotropic vector $v$ to $\Psi(v,v)$). This group has index $2$ in ${\rm SO}(N,\Fp_\ell)$.
 In the even dimensional case the order of the orthogonal group depends 
 on the class modulo squares of the discriminant of the underlying quadratic form. Specifically 
 (see e.g.~\cite[Table $2.1$C]{KL90}):
 $$
 \# \Oo(N,\Fp_\ell)=\begin{cases}2\ell^{(\frac{N-1}{2})^2}\prod_{i=1}^{(N-1)/2}(\ell^{2i}-1)
& \text{ if $N$ is odd, }\\
2\ell^{\frac{N(N-2)}{4}}(\ell^{N/2}\mp 1)\prod_{i=1}^{N/2-1}(\ell^{2i}-1)
& \text{ if $N$ is even and $\left(\frac{\disc\, \Psi}{\ell}\right)=\pm 1$,}
 \end{cases}
 $$
 where $(\frac{\cdot}{\ell})$ denotes the Legendre character modulo $\ell$. 
In the even dimensional case distinct orders for orthogonal groups 
correspond either to a split (i.e.~$(-1)^{N/2}\det \Psi$ 
is a square) quadratic form or to a non split $\Fp_\ell$-quadratic space. 
 One easily deduces the existence 
 of positive constants $c_1(N),c_2(N)$ depending only on $N$ such that
 \begin{equation}\label{eq:orderorthogonal}
 c_1(N)\leq\frac{ \# \Oo(N,\Fp_\ell)}{\ell^{\frac{N(N-1)}{2}}}\leq c_2(N)\,,
 \end{equation}
 independently of the parity of $N$ and the class of the discriminant of $\Psi$ modulo squares.

 \par\medskip
 
 To state our next large sieve estimate we introduce some further notation and definitions. 
 Generalizing~\eqref{red-char} to any polynomial $f\in \Qq[T]$ of degree $N$ 
 satisfying an equation of the type
 \begin{equation}\label{eq:f}
 T^N f(1/T)=\eps(f)f(T)\,,\qquad \eps(f)=\pm 1\,,
 \end{equation}
 we define
$$
 f_{\rm red}(T)=\begin{cases} f(T)/(1+\eps(f)T)\,&\,\text{if}\,\,N\,\text{is odd}\,,\\
                                f(T)/(1-T^2)\,&\,\text{if}\,\,N\,\text{is even and}\,\,\eps(f)=-1\,,\\
                                 f(T)\,&\,\text{otherwise}\,.
                 \end{cases}
$$
 Finally let $k\geq 1$ be an integer and let $\mathcal F$  be a lisse $\Zz_\ell$-adic 
 sheaf on a $d$-dimensional 
 variety $V/\Fp_q$ whose
 arithmetic monodromy group modulo $\ell$ embeds in a product of type
 $$
  \prod_{1\leq m\leq k} {\bf G}(r(d,m),\Fp_\ell)\,,
  $$
  where for any ring $A$
  $$
  {\bf G}(r(d,m),A):=\begin{cases} \Oo(r(d,m),A)& \text{ if } m\text{ is odd}\,,\\
  {\rm CSp}(r(d,m),A)&\text{ if } m\text{ is even}\,.\end{cases}
 $$
Here $r(d,m)$ denotes an integer depending only on $m$ and $d$ and ${\rm CSp}(r(d,m),A)$ is the group 
of symplectic similitudes of a non-degenerate $r(d,m)$-dimensional $A$-module. We say that $\mathcal F$ has 
 \emph{big geometric monodromy modulo $\ell$} if there is a
    Galois \'etale cover $V^{\rm cov}/V$, with group $\mathcal G_V$ an elementary $2$-group, 
    whose geometric monodromy group modulo $\ell$ 
     contains
      $$
      \prod_{1\leq m\leq  k} {\bf G}'(r(d,m),\Fp_\ell)\,,
      $$
where for any ring $A$
  $$
  {\bf G'}(r(d,m),A):=\begin{cases} \Omega(r(d,m),A)&\text{ if } m\text{ is odd}\,,\\
\Sp(r(d,m),A)\, &\text{ if } m\text{ is even}\,.\end{cases}
 $$
 
   \begin{theorem}\label{product-sieve-statement}
   Assumptions on $V/\Fp_q$ are the same as in the statement of Theorem~\ref{sieve-statement}. 
   We keep the notation as above. 
   Let $k\geq 1$ be an integer.
   Let $\Lambda_{d,k}$ be a set of primes of positive density and suppose the density depends 
   only on the dimension $d$ of $V$ and on $k$.
    Suppose further that for each $\ell\in \Lambda_{d,k}$ we are given a 
   sheaf $\tilde{\mathcal T}_{d,k,\ell}$ of free $\Zz_\ell$-modules corresponding to a representation 
   $$
   \tilde{\rho}_\ell: \pi_1(V,\bar{\eta})\rightarrow
    \prod_{m=1}^{k} {\bf G}(r(d,m),\Zz_\ell)\,.
   $$
   For $n\in\{1,\ldots,k\}$ let $\tilde{\mathcal T}_{d,\ell}^{(n)}$ be the sheaf (with associated representation 
   denoted $\tilde{\rho}_{\ell}^{(n)}$)
 corresponding to the composition of $\tilde{\rho}_\ell$ with projection onto the $n$-th factor 
 (a sheaf with orthogonal or symplectic symmetry depending on the parity of $n$) 
   and assume $(\tilde{\mathcal T}_{d,\ell}^{(n)})_{\ell\in \Lambda_{d,k}}$ forms a compatible 
   system of $\Zz_\ell$-sheaves. Then $(\tilde{\mathcal T}_{d,k,\ell})_{\ell\in\Lambda_{d,k}}$
    is a compatible system of $\Zz_\ell$-sheaves. Let $f\in V(\Fp_q)$ and 
   $$
   L_f:=\det(1-T\tilde{\rho}_\ell(\Frob_f))\in\Zz[T]\,.
   $$
   Assume the following conditions are fulfilled:
   \begin{itemize}
   \item[(i)] the system $(\tilde{\mathcal T}_{d,k,\ell})_{\ell\in \Lambda_{d,k}}$
    has big geometric monodromy modulo $\ell$ for all $\ell\in\Lambda_{d,k}$, and the corresponding 
    cover $V^{\rm cov}/V$ does not depend on $\ell\in \Lambda_{d,k}$,
    
\item[(ii)]  $p\nmid G_{\ell}^{g,{\rm cov}}$ for all $\ell\in \Lambda_{d,k}$,
\item[(iii)] for all $\ell\in \Lambda_{d,k}$ and all $m$ either $r(d,2m-1)$ is odd or the orthogonal 
group $\Oo(r(d,2m-1),\Fp_\ell)$ appearing corresponds to a \emph{split} quadratic form 
over $\Fp_\ell$.
\end{itemize}
 Then we have for any sufficiently large power $q:=p^n$ ($n$ has to be chosen bigger 
 than a constant depending only on $\overline{V}$):
\begin{equation} \label{theoretic-bound}
\#\{f\in V(\Fp_q): L_{f,\rm red}\text{ is reducible or } 
\Gal_{\Qq}(L_{f,\rm red})\text{ is not maximal} \}\ll_{d,k} q^{d-\gamma^{-1}}\log q\,,
\end{equation}
where one can choose:
$$
2\gamma=4+7\sum_{m=1}^{k} \tilde{h}(m)\,,\qquad \tilde{h}(m):=\begin{cases}
&r(d,m)\left(r(d,m)-1\right)\text{ if } m\text{ is odd}\,,\\
&r(d,m)\left(r(d,m)+1\right)\text{ if } m\text{ is even}\,. \end{cases}
$$ 
Here ``maximal'' means that the corresponding Galois group is isomorphic to $\mathcal W^{(k)}$ 
(with notation 
 as in Corollary~\ref{KoCor2.3}).

For any $1\leq j\leq k$ let  
$$
L_{f,j}=\det(1-T\tilde{\rho}_{\ell,j}(\Frob_f))\in\Zz[T]\,,
$$
then in the above estimate $L_{f,\rm red}$ denotes the product over 
even indices of $L_{f,j}$ times the product over odd indices of $L_{f,j,{\rm red}}$.
 The implied constant in the upper bound depends only on $d$ and $k$.
  \end{theorem}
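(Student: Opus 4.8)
The plan is to realise the set counted in \eqref{theoretic-bound} as the output of Kowalski's coset sieve, applying Theorem~\ref{sieve-statement} to the reductions modulo $\ell$ of the $\Zz_\ell$-sheaves $\tilde{\mathcal T}_{d,k,\ell}$ and to the \'etale cover $V^{\rm cov}/V$ supplied by hypothesis (i) (which is independent of $\ell$ by assumption). First I would verify the hypotheses of Theorem~\ref{sieve-statement}. Big geometric monodromy (hypothesis (i)) gives $G_\ell^{{\rm geom},\rm cov}\supseteq\prod_{m}\mathbf G'(r(d,m),\Fp_\ell)$, which for $\ell$ large is a product of quasi-simple groups of Lie type in characteristic $\ell$; since two such products for $\ell\neq\ell'$ share no nontrivial quotient, Goursat's lemma forces the product maps $\rho_{\ell,\ell'}$ to be onto, and discarding finitely many small primes from $\Lambda_{d,k}$ (harmless for the density) we may assume this for all pairs. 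Hypothesis (ii) is exactly $p\nmid\#G_\ell^{{\rm geom},\rm cov}$. For the common coset representative $\gamma_0$: by purity (Theorem~\ref{propL}(iii)) the determinant and the similitude factor of $\tilde\rho_\ell(\Frob_f)$ equal fixed powers of $q$, hence roots of unity modulo $\ell$, and, using compatibility of the systems $(\tilde{\mathcal T}_{d,\ell}^{(n)})_\ell$ together with hypothesis (iii) — which fixes each orthogonal factor to be of \emph{split} type, so its geometric monodromy is the full $\Omega$ and the attached spinor-norm and discriminant data are constant — the image of $\Frob_f$ in $G_\ell/G_\ell^{\rm geom}$ is one and the same $\gamma_0$ for all $f\in V(\Fp_q)$, provided $q=p^n$ is large enough that the arithmetic and geometric monodromy of the cover differ only by that abelian quotient; this is where the lower bound on $n$ enters. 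Because those similitude factors are roots of unity mod $\ell$, the identity component of the algebraic group underlying the $G_\ell$'s is $\prod_{m\text{ odd}}\SO(r(d,m))\times\prod_{m\text{ even}}\Sp(r(d,m))$, of dimension $d'=\sum_{m\text{ odd}}\binom{r(d,m)}{2}+\sum_{m\text{ even}}\binom{r(d,m)+1}{2}=\tfrac12\sum_{m=1}^{k}\tilde h(m)$, so Theorem~\ref{sieve-statement} applies with $A=7d'/2+1$.

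Next I would choose the sieving sets. By compatibility of $(\tilde{\mathcal T}_{d,k,\ell})_\ell$, both $L_f$ and each factor $L_{f,j}$ lie in $\Zz[T]$ independently of $\ell$, so $L_{f,\rm red}$, and its Galois group over $\Qq$ embedded in $\mathcal W^{(k)}=\prod_j\mathcal W_{2g_j}$ (with $2g_j$ the degree of the $j$-th factor, of Weyl type $D$ on the odd factors by hypothesis (iii) and of type $C$ on the even ones), are well defined as in \S\ref{section:GaloisMax}. The plan is then to invoke a group-theoretic criterion — generalising the methods of \cite{Jou09} and \cite{Kow08b} to $k$-fold products and to both symmetric types, and resting on Lemma~\ref{ActW+}, Corollary~\ref{KoCor2.3}, Proposition~\ref{KoProp2.4}, and Jordan's theorem that a finite group is not the union of the conjugates of a proper subgroup — which produces a finite index set $I$ and, for each $i\in I$ and each prime $\ell$, a conjugacy-invariant subset $\Theta_\ell^{(i)}$ of the coset $\gamma_0 G_\ell^{\rm geom}$, described purely through the $\Fp_\ell$-factorisation type of $\det(1-Tg)$ (with the functional-equation-forced cyclotomic factors removed from the odd components), such that: if for \emph{every} $i\in I$ there is \emph{some} $\ell\leq L$ in $\Lambda_{d,k}$ with $\rho_\ell(\Frob_f)\in\Theta_\ell^{(i)}$, then $L_{f,\rm red}$ is separable, each of its $k$ factors is $\Qq$-irreducible, their splitting fields are jointly linearly disjoint, and $\Gal_\Qq(L_{f,\rm red})\cong\mathcal W^{(k)}$. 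Contrapositively, the set $\{f\in V(\Fp_q):L_{f,\rm red}\text{ reducible or }\Gal_\Qq(L_{f,\rm red})\text{ not maximal}\}$ is contained in $\bigcup_{i\in I}\{f\in V(\Fp_q):\rho_\ell(\Frob_f)\notin\Theta_\ell^{(i)}\text{ for all }\ell\leq L,\ \ell\in\Lambda_{d,k}\}$, so it suffices to bound each of the finitely many pieces.

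For each $i$ I would show $\#\Theta_\ell^{(i)}\geq c_i\,\#G_\ell^{\rm geom}$ with $c_i>0$ depending only on $d$ and $k$, uniformly for large $\ell\in\Lambda_{d,k}$: this rests on the standard estimates for characteristic polynomials of elements of $\Omega(N,\Fp_\ell)$ and $\Sp(N,\Fp_\ell)$ (squarefree characteristic polynomial with proportion $1-O(1/\ell)$, and each prescribed \emph{generic} factorisation type realised with a proportion bounded away from $0$ uniformly in $\ell$), hypothesis (iii) again keeping the orthogonal factors of a fixed split type so that these proportions do not oscillate with $\ell$, while \eqref{eq:orderorthogonal} and its symplectic analogue control $\#G_\ell^{\rm geom}$ itself. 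Plugging $\Theta_\ell=\Theta_\ell^{(i)}$ into \eqref{eq:sieve}, the quantity $H$ there is at least $\tfrac{c_i}{1-c_i}\#\{\ell\leq L:\ell\in\Lambda_{d,k}\}\gg_{d,k}L/\log L$ by the prime number theorem and the positivity of the density of $\Lambda_{d,k}$, whence
$$\#\{f\in V(\Fp_q):\rho_\ell(\Frob_f)\notin\Theta_\ell^{(i)}\text{ for all }\ell\leq L,\ \ell\in\Lambda_{d,k}\}\ll_{d,k}\bigl(q^d+q^{d-1/2}(L+1)^A\bigr)\frac{\log L}{L}.$$
Taking $L$ of size $\asymp q^{1/(2A)}$ balances the two terms and yields $\ll_{d,k}q^{d-1/(2A)}\log q$ — legitimate once $n$ is large enough that $L\geq 2$ and all primes $\leq L$ in $\Lambda_{d,k}$ meet the requirements of the first two steps — and summing over the finitely many $i\in I$ gives \eqref{theoretic-bound} with $\gamma^{-1}=1/(2A)$. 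Since $2A=7d'+2$ and $d'=\tfrac12\sum_m\tilde h(m)$, this reads $2\gamma=4A=4+14d'=4+7\sum_{m=1}^{k}\tilde h(m)$, as asserted.

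The analytic part — verifying the hypotheses of Theorem~\ref{sieve-statement}, the density lower bounds, and the optimisation — is a lengthy but routine elaboration of \cite{Kow08a} and \cite{Jou09}; the genuine obstacle is the group-theoretic criterion of the second step. It must handle the $D$-type (orthogonal, odd symmetric powers) and $C$-type (symplectic, even symmetric powers) hyperoctahedral groups at once; it must exclude the ``diagonal'' subgroups of the product $\mathcal W^{(k)}$ so as to secure joint linear disjointness of the $k$ splitting fields — precisely where Goursat's lemma and the non-solvability of $W_{2g}^+$ for $g\geq 5$ (and its non-commutativity for $g\geq 3$) recorded in the proof of Proposition~\ref{KoProp2.4} are indispensable; and the factorisation types used to rule out each conjugacy class of maximal subgroups of $\mathcal W^{(k)}$ must be exactly those occurring with positive frequency in the relevant finite classical groups, so that reconciling the classification of those maximal subgroups with the available ``generic'' cycle types is the delicate point of the argument.
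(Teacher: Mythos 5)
Your proposal follows essentially the same route as the paper: the set in \eqref{theoretic-bound} is covered by finitely many sifted sets built from conjugacy-invariant subsets $\Theta_\ell^{(\mathbf{i})}$ detecting maximality (the four families of \cite{Jou09} and \cite{Kow08b} in each factor, plus a ``zeroth'' family forcing the Galois group to be the full product $\mathcal W^{(k)}$ rather than merely surjecting onto each factor), linear disjointness of the reductions modulo distinct $\ell,\ell'$ is obtained by the same Goursat-type argument (all normal subgroups of $\Sp(2n,\Fp_\ell)$ and $\Omega(n,\Fp_\ell)$ are central), and Theorem~\ref{sieve-statement} with $L\asymp q^{1/(2A)}$ yields $2\gamma=4A=4+7\sum_m\tilde h(m)$ exactly as you compute. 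The one ingredient you leave as a black box --- the uniform positive-density lower bound for the zeroth sieving set in the orthogonal factors, which is precisely where hypothesis (iii) on splitness is used --- is the content of the paper's Lemma~\ref{lem:count}, proved there by counting split separable reciprocal polynomials with prescribed value of $f(-1)$ modulo squares via Zassenhaus's spinor-norm formula; the remaining steps are likewise cited from \cite{Kow08b} and \cite{Jou09} in the paper's own proof.
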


\begin{proof}
First note that one has trivially:
$$
L_f=\prod_{1\leq m\leq k}\det(1-T\tilde{\rho}_{\ell}^{(m)}(\Frob_{f}))\,,
$$
so that $(\tilde{\mathcal T}_{d,k,\ell})_{\ell\in\Lambda_{d,k}}$ is automatically a 
compatible system of $\Zz_\ell$-sheaves. 

To prove~\eqref{theoretic-bound} we follow the strategy of~\cite[Proof of Th.~$4.3$]{Kow08b} 
where only sheaves exhibiting symplectic symmetry were needed. 
In \emph{loc.~cit.~}the author recalls that in earlier work of his he defined four sets 
$\Theta_{i,\ell}\subseteq {\rm CSp}(2g,\Fp_\ell)$, $1\leq i\leq 4$, that detect the maximality 
of the Galois group of the $\Qq$-polynomial investigated. Since~\cite[Th.~$4.3$]{Kow08b} 
deals, as we do, with \emph{products} of characteristic polynomials an additional sieving set 
(to which the index $i=0$ is attributed) is introduced in the proof to guarantee that 
the Galois group obtained does not 
merely surject onto each factor of the product group $\mathcal W^{(k)}$ but is in 
fact equal to the whole group $\mathcal W^{(k)}$.

Likewise four families of sieving sets were identified in~\cite{Jou09} (where only 
sheaves exhibiting orthogonal symmetry appeared) and shown 
to be sufficient to ensure maximality of the Galois group investigated. However we also 
need a suitable ``zeroth'' family $(\Theta_\ell^{(0)})$ (see Lemma~\ref{lem:count} for the 
definition)
of sieving sets to guarantee the maximality of  the Galois 
group as a product group. Because of complications with 
orthogonal groups one needs to be extra careful in our case 
 when handling multi-indices ${\bf i}=(i_1,\ldots, i_k)$ 
where $i_m=0$ for some odd $m$. This is the reason why we have to impose a particular value 
of the discriminant (modulo squares) of the quadratic spaces coming into play in the statement.
 Lemma~\ref{lem:count} (the proof of 
which we postpone till the end of the section) asserts that we do have a
lower bound on the density of sets $\Theta_\ell^{(0)}$ of type
$$
\#\Theta^{(0)}_\ell/\#\Omega(r(d,m),\Fp_\ell)\gg 1\,,
$$
for every odd $m$, with an implied constant depending only on $r(d,m)$. 

At even indices, the family of sieving sets $(\Theta_\ell)$ we choose is the same as 
in~\cite{Kow08b}. Now denoting $c_i^{(m)}$ 
the element (determined up to conjugation) of the Galois group of the $m$-th factor 
corresponding to the sieving set $\Theta_\ell^{(i)}$, for $1\leq i\leq 4$, and noticing
 that the trivial permutation of the Galois group corresponds to the sieving sets
  $\Theta^{(0)}_\ell$ one deduces 
that in the Galois group investigated one may use sieve to detect all permutations of type
$$
(1,\ldots,1,c_i^{(m)},1\ldots,1)\,,
$$
for any $1\leq i\leq 4$ and any $1\leq m\leq k$. If all these permutations are successfully
 detected we conclude that the Galois group is isomorphic to $\mathcal W^{(k)}$. 
In particular, we only need to consider the $4k$ families $(\Theta^{({\bf i})}_\ell)$ 
(for indices ${\bf i}$ as described above) for our purpose.

 To be in the context of Theorem~\ref{sieve-statement} it remains to check the linear 
 disjointness condition for product representations $\rho_{\ell,\ell'}$, for $\ell\neq \ell' 
\in\Lambda_{d,k}$. Kowalski's argument 
(\cite[Lemma $4.4$]{Kow08b}) can easily be generalized to our setting
 thanks to the group theoretical properties shared by the groups $\Sp(2n,\Fp_\ell)$ and 
$\Omega(n,\Fp_\ell)$: both are groups with all normal subgroups contained 
in the center. Let $N(q)$ be the left-hand side of~\eqref{theoretic-bound}. 
By the above considerations and applying Theorem~\ref{sieve-statement} we get the
inequality:
$$
N(q)\leq \#\mathcal G_V\cdot(4k)\cdot(q^d+Cq^{d-1/2}(L+1)^A)(\delta(\Lambda_{d,k})H)^{-1}\,,
$$ 
for any $L\geq \min \Lambda_{d,k}$ and where one can choose
$$
H=\min_{\bf i}\sum_{\ell\leq L}\left(\frac{\#\Theta_\ell^{({\bf i})}}{
\#\prod_{1\leq m\leq k} {\bf G}^{'}(r(d,m),\Fp_\ell)}\right)\,,
$$
and 
$$
A=1+7\sum_{m=1}^{k} \dim {\bf  H}(r(d,m))\,\qquad {\bf H}(r(d,m)):=\begin{cases}
&\Oo(r(d,m)\text{ if } m\text{ is odd}\,,\\ &\Sp((r(d,m))\text{ if } m \text{ is even}\,.\end{cases}
$$
Then we choose $L$ such that $CL^A=q^{1/2}$ that is $L=(qC^{-2})^{1/(2A)}$ 
(this quantity is greater than $\min\Lambda_{d,k}$ as long as $q$ is a big enough power of $p$ 
; the exponent depends only on the constant $C$ which in turn depends only on $\overline{V}$).
 Thus the choice $\gamma=2A$ 
is suitable and the upper bound stated follows from the well known formul\ae\, for the dimension 
of the orthogonal and symplectic groups.
\end{proof}

 We now state the following counterpart of Theorem~\ref{product-sieve-statement}
  in terms of independence of the zeros.

\begin{corollary}\label{cor4.7}
Keeping notation as in Theorem~\ref{product-sieve-statement} denote by 
$\mathcal Z(L_{f,{\rm red}})$ the (multi-) set of inverse roots of the reduced version of the polynomial
$L_f$. Then we have
$$
\#\left\{f\in V(\Fp_q): 
{\rm Rel}\left(\mathcal Z(L_{f,{\rm red}})\right)\text{ is nontrivial }\right\}\ll q^{d-\gamma^{-1}}\log q\,,
$$
where one can take 
$$
2\gamma=4+7\sum_{m=1}^{k} \tilde{h}(m)\,,
$$ 
 and where the implied constant depends only on $d$ and $k$.
\end{corollary}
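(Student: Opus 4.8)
The plan is to deduce Corollary~\ref{cor4.7} directly from Theorem~\ref{product-sieve-statement} by combining its conclusion with the representation-theoretic dictionary of \S\ref{section:Galois}. First I would observe that the event ``${\rm Rel}(\mathcal Z(L_{f,{\rm red}}))$ is nontrivial'' is contained in the event treated by Theorem~\ref{product-sieve-statement}, namely ``$L_{f,\rm red}$ is reducible or $\Gal_{\Qq}(L_{f,\rm red})$ is not maximal''. Indeed, suppose $f\in V(\Fp_q)$ is such that $L_{f,\rm red}$ is irreducible (more precisely, that each odd factor $L_{f,2m-1,{\rm red}}$ and each even factor $L_{f,2m}$ is irreducible, so that the factors are jointly linearly disjoint over $\Qq$) and $\Gal_{\Qq}(L_{f,\rm red})\simeq\mathcal W^{(k)}$. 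Then by construction each $\gamma_{m,j}(f)$ has modulus $q^{(m+1)/2}$ by Theorem~\ref{propL}(iii), and the product of each zero with its ``mate'' equals $q^{m+1}\in\Qq^\times$, so the hypotheses of Proposition~\ref{KoProp2.4} are satisfied with $\mu=q^{m+1}$ (note $\mu\neq 1$ for every $m\ge1$, so we are in the ``otherwise'' branch and need $g\ge5$ for each factor, which holds once the degrees are large enough, i.e.\ once $d$ and $\deg D$ exceed the absolute constants already present in Theorems~\ref{th:main1} and~\ref{th:main2}). Proposition~\ref{KoProp2.4} then gives
$$
{\rm Rel}_{\Qq}(\mathcal Z(L_{f,{\rm red}}))=\bigoplus_{1\leq j\leq k}{\rm Rel}_{\Qq}(M_j),\qquad {\rm Rel}_{\Qq}(M_j)=G(M_j),
$$
which is precisely the subspace spanned by the trivial relations~\eqref{eq:triv} (after tensoring with $\Qq$). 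Hence ${\rm Rel}(\mathcal Z(L_{f,{\rm red}}))$ is trivial for such $f$.

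Consequently, the complement — the set of $f$ for which ${\rm Rel}(\mathcal Z(L_{f,{\rm red}}))$ is nontrivial — is contained in the set counted on the left-hand side of~\eqref{theoretic-bound}. Applying Theorem~\ref{product-sieve-statement} verbatim gives
$$
\#\left\{f\in V(\Fp_q):\ {\rm Rel}\left(\mathcal Z(L_{f,{\rm red}})\right)\text{ is nontrivial}\right\}\ll q^{d-\gamma^{-1}}\log q,
$$
with the same $\gamma$, namely $2\gamma=4+7\sum_{m=1}^k\tilde h(m)$, and with an implied constant depending only on $d$ and $k$.

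The only genuinely non-routine point — and the main (modest) obstacle — is the bookkeeping around the functional equation: one must check that the group ${\rm Rel}_{\Qq}$ produced by Proposition~\ref{KoProp2.4} really coincides with the span of the relations~\eqref{eq:triv}, i.e.\ that ``$\gamma_{m,j}\gamma_{m,j+\nu_{m,{\rm red}}/2}=q^{m+1}$'' exhausts the trivial relations and that no further relation of type ${\bf 1}\oplus G(M_j)$ sneaks in when $\mu\ne1$. This is handled exactly as in the discussion following~\eqref{eq:triv} together with Proposition~\ref{KoProp2.4}: since $q^{m+1}\ne1$, the ${\bf 1}$-isotypic component does not lie in ${\rm Rel}_{\Qq}(M_j)$, so ${\rm Rel}_{\Qq}(M_j)=G(M_j)$ is exactly the trivial part. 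A secondary point is to make sure that ``$L_{f,\rm red}$ irreducible'' in the statement of Theorem~\ref{product-sieve-statement} is strong enough to guarantee the joint linear disjointness of the $k$ factors needed to invoke Corollary~\ref{KoCor2.3} and Proposition~\ref{KoProp2.4}; this follows because the ``zeroth'' sieving sets $\Theta_\ell^{(0)}$ built into the proof of Theorem~\ref{product-sieve-statement} precisely detect the failure of the Galois group to be the full product $\mathcal W^{(k)}$ rather than a proper subgroup surjecting onto each factor. With these two checks in place the corollary is immediate.
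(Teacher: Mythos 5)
Your overall strategy is exactly the paper's: deduce the corollary from Theorem~\ref{product-sieve-statement} combined with Proposition~\ref{KoProp2.4} via the dictionary of \S\ref{section:implication}. There is, however, a small but real discrepancy in the normalisation bookkeeping, and it is worth spelling out.

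You apply Proposition~\ref{KoProp2.4} with $\mu = q^{m+1}$, invoking the ``otherwise'' branch (hence $g\geq 5$) and concluding ${\rm Rel}_{\Qq}(M_j)=G(M_j)$. The paper's proof instead observes that each factor of $L_{f,\rm red}$ satisfies the functional equation~\eqref{eq:f} with $N$ even and $\eps=+1$, so the multiset of inverse roots is closed under inversion; one then applies Proposition~\ref{KoProp2.4} with $\mu=1$, landing in the ${\bf 1}\oplus G(M_j)$ branch (and only needing $g\geq 3$). This matters for two reasons. First, $\mathcal Z(L_{f,\rm red})$ as used in the corollary is the unitarised object (compare the remark after the definition of ${\rm Rel}(M)$ in \S\ref{section:GaloisSetup}: the elements of $M$ should have modulus one, precisely to match the definition of ${\rm Rel}$ via the angles in \S\ref{sub:L-func}); the $\gamma_{m,j}(f)$ of modulus $q^{(m+1)/2}$ that you plug in are not literally the elements of $\mathcal Z(L_{f,\rm red})$. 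Second, your claim that $G(M_j)$ ``is precisely the subspace spanned by the trivial relations~\eqref{eq:triv}'' is not accurate as stated: in the $\mu=1$ picture the $\Qq$-span of the trivial relations is ${\bf 1}\oplus G(M_j)$, while in your $\mu=q^{m+1}$ picture the relations~\eqref{eq:triv} are not even of the form $\prod\gamma^{n}=1$ (they involve $q^{m+1}$). The logical chain you run still leads to the right conclusion, since in either branch Proposition~\ref{KoProp2.4} identifies ${\rm Rel}_\Qq(M_j)$ with a submodule of the span of the trivial relations and hence forbids anything nontrivial; but the cleaner — and the paper's — route is to pass to the unitarised roots, observe $\mu=1$, and avoid both the $g\geq 5$ restriction and the mismatch with the definition of ${\rm Rel}$. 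The remaining steps (linear disjointness of the splitting fields guaranteed by maximality $\Gal_{\Qq}(L_{f,\rm red})\simeq\mathcal W^{(k)}$, and then applying~\eqref{theoretic-bound} verbatim) are correct and identical to the paper.
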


\begin{proof}
 The argument is the same as the one used to deduce Proposition~\ref{KoProp1.1} 
 from Theorem~\ref{th4.3} (i.e.~the relationship between maximality of the Galois group and 
 independence of the zeros explained in \S\ref{section:implication}).
 The functional equation satisfied by each $L_{f,m,{\rm red}}$ is~\eqref{eq:f} in the case 
 where the degree is even and the sign of the functional equation is $+1$. Therefore the (multi-)set 
 of zeros of $L_{f,{\rm red}}$ coincides with the (multi-)set of its inverse zeros. 
 Proposition~\ref{KoProp2.4} states that as long as $\Gal_{\Qq}(L_{f,{\rm red}})$ is maximal 
 (i.e.~isomorphic to a group of type $\mathcal W^{(k)}$)
 then the module of relations among the zeros 
 of $L_{f,{\rm red}}$ is 
 $$
 \bigoplus_{1\leq j\leq k}\left({\bf 1}\oplus G(M_j)\right)\,,
 $$
 i.e.~it reduces to the relations imposed by the functional equation satisfied by $L_{f,{\rm red}}$ 
 (the so-called trivial relations~\eqref{eq:triv}).
\end{proof}

We end this section with the statement and the proof of the counting lemma 
needed in the proof of Theorem~\ref{product-sieve-statement}. For simplicity all congruences 
in the sequel will mean ``congruences modulo the group of non-zero squares of $\Fp_\ell$''.

\begin{lemma}\label{lem:count}
 Let $N\geq 4$ be an integer and $\ell\geq 3$ be a prime number.
  Let $f$ be a monic polynomial of degree $N$
  satisfying~\eqref{eq:f} then there is a non-degenerate $N$-dimensional quadratic 
   $\Fp_\ell$-space
   $(V,\Psi)$ and an 
   isometry $\gamma$ of this space such that $\det(T-\gamma)=f(T)$. Moreover if $N$ is even 
   and $f(\pm 1)\neq 0$ then
   one has necessarily $\det \Psi\equiv f(-1)f(1)$.
   
   If $N$ is even assume that the quadratic structure $(V,\Psi)$ 
   is \emph{split} i.e.~$(-1)^{N/2}\det \Psi\equiv 1$.
   
    Let $\Omega(N,\Fp_\ell)$ 
   be the derived group of the orthogonal group $\Oo(V)$ and let $\alpha_\ell$ 
   be a representative of the four classes of $\Oo(V)$ with respect to $\Omega(N,\Fp_\ell)$. 
   If we set
   $$
   \Theta_\ell^{(0)}:=\{M\in \alpha_\ell\Omega(N,\Fp_\ell): \det(1-TM)\text{ is separable and split over }
   \Fp_\ell\}
   $$
   then we have 
   $$
   \frac{\#\Theta_\ell^{(0)}}{\# \Omega(N,\Fp_\ell)}\gg_N 1\,.
   $$
\end{lemma}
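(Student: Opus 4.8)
The statement has two essentially separate parts, and I would treat them in turn. The first part is the assertion that a monic $f$ satisfying the self-reciprocity relation~\eqref{eq:f} is realized as $\det(T-\gamma)$ for an isometry $\gamma$ of some non-degenerate $N$-dimensional quadratic $\Fp_\ell$-space, together with the discriminant identity $\det\Psi\equiv f(-1)f(1)$ when $N$ is even and $f(\pm1)\neq 0$. This is classical linear algebra over a field: given the companion-matrix action of $f$ on $V=\Fp_\ell[T]/(f)$, the condition $T^N f(1/T)=\eps(f)f(T)$ says precisely that multiplication by $T$ is conjugate to multiplication by $T^{-1}$ (up to the sign $\eps(f)$, which is absorbed since $-\gamma$ is an isometry of $-\Psi$), which is exactly the compatibility needed for the existence of a $\gamma$-invariant non-degenerate symmetric bilinear form; one builds $\Psi$ by a standard argument decomposing $V$ into $\gamma$-indecomposable pieces and pairing each piece with its ``reciprocal'' partner. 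For the discriminant, the quickest route is to compute $\det(\gamma)$ from the product of eigenvalues (which pair as $\lambda,\lambda^{-1}$) and to use the formula relating $\det\Psi$ to $\det(1-\gamma)\det(1+\gamma)=f(1)f(-1)$ up to squares — this is the identity recorded, e.g., in the literature on characteristic polynomials of orthogonal transformations, and one should cite or reprove it via the factorization of $V$ into the $\pm1$-eigenspaces and the part where $\gamma\mp1$ is invertible.

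\textbf{The density estimate.} The substantive part is the lower bound $\#\Theta_\ell^{(0)}/\#\Omega(N,\Fp_\ell)\gg_N 1$, where $\Theta_\ell^{(0)}$ consists of those $M$ in the nontrivial coset $\alpha_\ell\Omega(N,\Fp_\ell)$ whose characteristic polynomial is separable and split over $\Fp_\ell$. The plan is to exhibit a concrete conjugacy-stable family of such $M$ and count it. First I would observe that ``separable with all roots in $\Fp_\ell$'' means $M$ is a regular semisimple element whose eigenvalues lie in $\Fp_\ell^\times$ and, by self-reciprocity, come in pairs $\{\lambda,\lambda^{-1}\}$; such elements form (the $\Fp_\ell$-points of) a maximal torus of the split form $\SO(N)$ — namely the ``fully split'' torus $\cong (\Fp_\ell^\times)^{\lfloor N/2\rfloor}$ (times a factor accounting for the determinant in the odd case, or the $\pm1$ eigenvalue bookkeeping in the even case). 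The number of regular elements in this torus is $(\ell-1)^{\lfloor N/2\rfloor}$ minus lower-order terms (excluding the finitely many walls $\lambda_i=\lambda_j^{\pm1}$, $\lambda_i=\pm1$), hence $\gg_N \ell^{\lfloor N/2\rfloor}$. Each such element sits in a single maximal torus, and its $\Oo(V)$-conjugacy class has size $\#\Oo(V)/\#(\text{normalizer of that torus})\asymp_N \ell^{N(N-1)/2}/\ell^{\lfloor N/2\rfloor}$. Multiplying, the set of all regular split-semisimple elements has cardinality $\gg_N \ell^{N(N-1)/2}\asymp_N \#\Omega(N,\Fp_\ell)$ by~\eqref{eq:orderorthogonal}.

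\textbf{The coset issue and the main obstacle.} The remaining — and I expect principal — difficulty is purely the coset bookkeeping: one must show that a \emph{positive proportion} of these regular split elements land in the specified nontrivial coset $\alpha_\ell\Omega(N,\Fp_\ell)$ of $\Omega$ inside $\Oo$, i.e. have a prescribed value of the determinant \emph{and} of the spinor norm. The determinant of a split-torus element is $\prod\lambda_i\cdot(\prod\lambda_i)^{-1}\cdot(\pm1)$, so for $N$ even it is forced to be $+1$ (it lies in $\SO$), and for $N$ odd one can hit either value of $\det$ freely; the spinor norm is the genuinely delicate invariant. I would compute the spinor norm of a split-torus element explicitly: on the hyperbolic plane spanned by a $\{\lambda,\lambda^{-1}\}$ pair, the corresponding rotation has spinor norm $\lambda$ (mod squares), so the total spinor norm is $\prod_i \lambda_i \bmod (\Fp_\ell^\times)^2$ (with a correction from the $\pm1$ part). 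Since the $\lambda_i$ range independently over $\Fp_\ell^\times$ and $\lfloor N/2\rfloor\geq 2$ (as $N\geq 4$), the map $(\lambda_i)\mapsto(\det,\text{spinor norm})\in\{\pm1\}\times\Fp_\ell^\times/(\Fp_\ell^\times)^2$ is (close to) equidistributed, so each of the four $\Omega$-cosets receives $\gg_N$ a quarter of the regular split elements. Splitness of $\det(1-TM)$ — meaning $(V,\Psi)$ restricted appropriately is the split form — is automatic here because a sum of hyperbolic planes (plus, in odd dimension, an anisotropic line that the split form already contains) is split; this is where the hypothesis $(-1)^{N/2}\det\Psi\equiv1$ is used to guarantee we are working inside the split $\Oo(V)$ in the first place. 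Assembling: the four cosets each contain $\gg_N \ell^{N(N-1)/2}$ regular split-semisimple elements, hence $\#\Theta_\ell^{(0)}/\#\Omega(N,\Fp_\ell)\gg_N 1$, uniformly in $\ell\geq 3$. The one point requiring care is making the equidistribution of $(\det,\text{spinor norm})$ genuinely uniform in $\ell$ rather than merely ``for $\ell$ large'', which one gets from an exact count of solutions to $\prod\lambda_i\equiv\text{(fixed class)}$ with the $\lambda_i$ avoiding the finitely many walls — an elementary character-sum computation whose error terms are $O_N(\ell^{\lfloor N/2\rfloor-1})$.
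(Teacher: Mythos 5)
Your proposal is correct in substance and arrives at the bound by a route that differs from the paper's mainly in one technical layer. The paper does not count matrices directly: it reduces to counting \emph{characteristic polynomials} (monic, reciprocal, separable, split over $\Fp_\ell$, roots avoiding $\pm1$, with $f(-1)$ in a prescribed square class), getting $\gg_N \ell^{N/2}$ such polynomials from a binomial-coefficient count, and then invokes \cite[Th.~15]{Jou10} to convert each admissible polynomial into $\asymp \#\Oo(V)/\ell^{N/2}$ isometries. Your direct count over regular split semisimple elements of the fully split maximal torus, weighted by conjugacy class sizes, is essentially a re-proof of that black box; both give $\gg_N \ell^{N(N-1)/2}\asymp\#\Omega(N,\Fp_\ell)$ via~\eqref{eq:orderorthogonal}. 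The spinor norm mechanism is the same in both arguments: the paper uses Zassenhaus's formula $\nsp(M)\equiv f(-1)$ together with the identity $\left(\frac{2+\beta+\beta^{-1}}{\ell}\right)=\left(\frac{\beta}{\ell}\right)$ and an orthogonality-of-characters count, which is exactly your statement that the spinor norm of the rotation with eigenvalues $\{\lambda,\lambda^{-1}\}$ is $\lambda$ modulo squares and that $\prod_i\lambda_i$ equidistributes over square classes. Your approach buys a self-contained argument; the paper's buys brevity and (via \cite[Lemma~16]{Jou10}) the option of uniformity in $N$.

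Two loose ends you should tighten. First, the conjugacy class of a regular semisimple element has size $\#\Oo(V)/\#C(t)$ with $C(t)$ the \emph{centralizer} (essentially the torus), not the normalizer; your final count is still right because the correct formula is $\#T_{\rm reg}\cdot[\Oo(V):N(T)]$, each class meeting $T$ in a full Weyl orbit, but as written the two errors cancel silently. Second, and more importantly, for $N$ even your split torus only produces elements of determinant $+1$, whereas two of the four cosets $\alpha_\ell\Omega(N,\Fp_\ell)$ have determinant $-1$; a separable element there must have both $+1$ and $-1$ as simple eigenvalues, and you must split off the line(s) $V_{\pm1}$ orthogonally, run the torus count on the $(N-2)$-dimensional complement, and check that the discriminant of $V_1\perp V_{-1}$ can be chosen so the complement is again split. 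This is precisely the reduction the paper performs first (``we may assume $\det\alpha_\ell=1$ and $N=N_{\rm red}$ even''), and the same device handles the odd-$N$ case; your parenthetical ``$\pm1$ eigenvalue bookkeeping'' gestures at it but does not carry it out.
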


\begin{proof}
The first part of the statement can be deduced from transfer arguments 
(see e.g.~\cite[Th.~$4.1$ and Prop.~$6.2$]{JRV14}). 

Let us turn to the proof of the lower bound for $\#\Theta_\ell^{(0)}/\# \Omega(N,\Fp_\ell)$.
 We first claim that we may assume without loss of generality that $\det \alpha_\ell=1$ and 
$N$ is even (i.e.~$N=N_{\rm red}$, where we recall that the characteristic polynomial of 
$\alpha_\ell$ satisfies~\eqref{eq:f} and where 
$N_{\rm red}$ is defined as the degree of its reduced version). 
Indeed we are only counting isometries that have a separable characteristic  
polynomial. If either $N$ is odd or the determinant of such an isometry $M$ is $-1$ the functional 
equation~\eqref{eq:f} will impose $\pm 1$ (or both) to be an eigenvalue of multiplicity 
one of $M$. The corresponding eigenspace $V_1$ (or $V_{-1}$, or both)
 has dimension $1$ and we have an orthogonal 
splitting (see e.g.~\cite[(6.3) and the references mentioned in the proof of Corollary $6.4$]{JRV14})
$$
V_{\pm 1}\bot V_{N_{\rm red}}\,,
$$
where $V_{\pm 1}$ stands either for $V_1$, $V_{-1}$ or the orthogonal sum of both, depending 
on the parity of $N$ and on the sign of $\det M$. The isometry 
$M$ restricts to the non-degenerate $N_{\rm red}$-dimensional 
quadratic space $V_{N_{\rm red}}$ as an isometry 
of determinant $1$. Up to imposing a split or non split 
quadratic structure on $V_{\pm 1}$ we can further assume 
$(-1)^{N/2}\det (V,\Psi)\equiv (-1)^{N_{\rm red}/2}\det (V_{N_{\rm red}},\Psi)$. This proves the claim. In particular in the rest of the proof 
we will use the fact that the set of roots and the set of reciprocal roots of the characteristic 
polynomials considered are the same.

The strategy is then to apply~\cite[Th.~$15$]{Jou10} i.e.~we reduce the question to that of 
counting candidate polynomials. Here these polynomials are reciprocal, monic, 
of degree $N$ and split over $\Fp_\ell$ with pairwise distinct roots. The reduction step from 
the general case to the case $N$ even and $\det=1$ imposes the roots of the candidate 
polynomials to be different from $\pm 1$. 

The structure of the candidate polynomials explains why we impose the split structure on the
orthogonal group. Indeed if $f$ is the characteristic polynomial (not vanishing 
at $\pm 1$) of an isometry of an even 
dimensional non-degenerate $\Fp_\ell$-quadratic space $(V,\Psi)$ one has 
(see~\cite[Prop.~$6.2$, Lemma $6.5$]{JRV14})
$$
\det \Psi\equiv (-1)^{N/2}\disc f\equiv f(1)f(-1)\,.
$$
Here we consider polynomials $f$ that are products of split quadratic 
polynomials of type $(T-\beta)(T-\beta^{-1})$. Note that
$$
(1-\beta)(1-\beta^{-1})(-1-\beta)(-1-\beta^{-1})=-(\beta-\beta^{-1})^2
$$
and hence $(-1)^{N/2}\det \Psi\equiv 1$ meaning $(V,\Psi)$ 
is a split quadratic $\Fp_\ell$-space.

To produce the candidate polynomials 
we split $\Fp_\ell^\times\setminus\{\pm 1\}$ in two disjoint subsets so that inversion 
induces a bijection between these two subsets. There are 
\begin{equation}\label{eq:binom}
\binom{\frac{\ell-3}{2}}{\frac{N}{2}}
\end{equation}
ways of picking $N/2$ suitable roots for the polynomials we consider (note each time 
we pick a root, its inverse will automatically be a root as well) all in the same one
 of the two subsets we have just described.
Since we are only interested in isometries with 
prescribed spinor norm (imposed by the choice of $\alpha_\ell$)
 we have yet to show that a positive 
proportion of the polynomials constructed correspond to an 
isometry of prescribed spinor norm. 
For that purpose we use the following result due to Zassenhaus (see e.g.~\cite[Th.~$5.1$]{JRV14} 
and the references therein). For any isometry $M$ 
of a quadratic (even dimensional) 
space $V$ that has a characteristic polynomial $f$ not vanishing at $\pm 1$
$$
 \nsp (M)\equiv f(-1)\,.
$$
Thus we want to check that the polynomials $f$ we have 
constructed take values $f(-1)$ that are roughly equidistributed in $\Fp_\ell^\times/\Fp_\ell^{\times 2}$. 
It is enough to show that this equidistribution property holds for any quadratic factor of degree $2$ 
of the polynomials we consider. Let
$(T-\beta)(T-\beta^{-1})$
be such a factor ($\beta\in\Fp_\ell^\times\setminus\{\pm 1\}$). Its value at $-1$ is 
$2+\beta+\beta^{-1}$ thus
$$
\left(\frac{2+\beta+\beta^{-1}}{\ell}\right)=
\left(\frac{\beta^2+2\beta+1}{\ell}\right)\left(\frac{\beta}{\ell}\right)=\left(\frac{\beta}{\ell}\right)\,.
$$
Thus using orthogonality relations we deduce
\begin{align*}
\#\{\beta\in\Fp_\ell^\times\setminus\{\pm 1\}\colon 2+\beta+\beta^{-1}\text{ is a square }\}
&=\frac{1}{2}\sum_{\beta\in\Fp_\ell^\times\setminus\{\pm 1\}}
\left(1+\left(\frac{2+\beta+\beta^{-1}}{\ell}\right)\right)\\
&=\frac{\ell-3}{2}+\frac{1}{2}\sum_{\beta\in\Fp_\ell^\times\setminus\{\pm 1\}}\left(\frac{\beta}{\ell}\right)
=\frac{\ell-3}{2}+O(1)\,,
\end{align*}
with an absolute implied constant.

Using~\eqref{eq:binom}, the above equidistribution fact and the classical lower bound 
on binomial coefficients $\binom{n}{k}\geq (n/k)^k$ we deduce
$$
\#\{f\in \Fp_\ell[T]\colon \deg f=N\,, f\text{ reciprocal, split, separable and } f(-1)\equiv \nsp(\alpha_\ell)\}
\gg_N \ell^{N/2}\,.
$$
The lower bound stated in the lemma follows from the above 
lower bound combined with~\eqref{eq:orderorthogonal} and~\cite[Th.~$15$]{Jou10}.
\end{proof}

 Without much extra work one could keep track along the proof  
 of the dependency on $N$ and thus get a uniform version of the lower bound of 
 Lemma~\ref{lem:count}. This was done for quite general sieving sets 
 in~\cite[Lemma $16$]{Jou10}. However for the application we have in mind in the present paper 
 the qualitative upper bound of Lemma~\ref{lem:count} suffices and for simplicity 
 we have chosen not to include the extra details that would lead to a uniform lower bound.


\section{Proof of the main results}\label{section:proof}

 In this section we prove Theorem~\ref{th:main1} and Theorem~\ref{th:main2}.
  We first explain the cohomological genesis 
 of the $L$-functions we study (i.e.~$L$-functions for families of elliptic curves described in 
 \S\ref{section:quadtwists} and \S\ref{section:pullback}). As already mentioned both these 
 families enjoy the property of having big geometric $\ell$-adic monodromy. Then we explain 
 how one deduces big monodromy modulo $\ell$ (for a big enough set 
 of primes) for these families 
 from the corresponding $\ell$-adic information. Combining these ingredients all the 
 assumptions needed for Theorem~\ref{product-sieve-statement} to apply will be 
 satisfied.

\subsection{Cohomological genesis of the $L$-functions considered}\label{section:cohgen}

 We describe very briefly the constructions of Katz leading to the two families 
 of elliptic curve $L$-functions we study.
 
 \subsubsection{The quadratic twist family}

We first focus on the family of quadratic twist $L$-functions of \S\ref{section:quadtwists}.

As before let $\ell$ be a rational prime invertible in $\Fp_q$ and let $E/K$ be an elliptic curve
 over $K=\Fp_q(C)$ with non-constant $j$-invariant and minimal Weierstrass  model 
  $\mathcal E\rightarrow C$. 
  There is an open dense curve with corresponding inclusion $j: U\subset C$ such 
  that each fiber of $\varpi: \mathcal E\rightarrow U$ is an elliptic curve.
  
   On $U$ we consider the constant $\ell$-adic sheaf $\overline{\Qq_\ell}$. 
   The sheaf ${\rm R}^1\varpi_\star  \overline{\Qq_\ell}$ on $U$ built out of
    the constant $\ell$-adic sheaf and of $\varpi$, is lisse of rank two,
    pure of weight one and everywhere tame if $p:={\rm char}\, \Fp_q\geq 5$ 
    (which is indeed the case throughout the paper by assumption). 
    The Tate twist ${\rm R}^1\varpi_\star  \overline{\Qq_\ell}(1/2)$ of that 
     sheaf is therefore of rank two, pure of weight zero, and symplectically self-dual (because 
     of the Weil pairing on the elliptic curve $E/K$). Define the sheaf
     $$
     \mathcal S:=j_\star{\rm R}^1\varpi_\star\overline{\Qq_\ell}(1/2)
     $$
     on $\Pp^1$. The open set on which $\mathcal S$ is lisse is the largest
     open set over which $E/K$ has good reduction. Given $n\geqslant 1$ 
     one can consider the symmetric $n$-th power of 
     ${\rm R}^1\varpi_\star\overline{\Qq_\ell}(1/2)$ on $U$ 
     (since this sheaf corresponds to a continous $\ell$-adic representation of
     the \'etale fundamental group of $U$ (with respect to a fixed base point)). This 
     sheaf $\Sym^n{\rm R}^1\varpi_\star\overline{\Qq_\ell}(1/2)$ is lisse on $U$ of rank $n+1$, 
     pure of weight zero, and everywhere tame. It is symplectically (resp.~orthogonally) self-dual 
     if $n$ is odd (resp.~if $n$ is even). Using the inclusion $j$, one can then define
     $$
     \mathcal S_n:=j_\star\Sym^n{\rm R}^1\varpi_\star\overline{\Qq_\ell}(1/2)\,,
     $$
which is a geometrically irreducible middle-extension sheaf on $\Pp_1$ (this comes from 
the fact, proven in~\cite[\S$3.5.5$]{Del80}, 
that  ${\rm R}^1\varpi_\star\overline{\Qq_\ell}(1/2)$ has $SL_2$ geometric monodromy).

 The above sheaf-theoretic constructions can be combined with twisting operations. 
 By a recipe described 
 by Katz in~\cite[\S $5.2.1$]{Kat02}, there is a lisse $\ell$-adic sheaf $\mathcal T_{d,n}$
  on $\mathcal F_d$ 
 (the singular locus of $\mathcal S_n$ being contained in the singular locus of $\mathcal S$ for any $n\geq 1$ )
 whose stalk at $f\in \mathcal F_d$ is 
 $H^1(\Pp_1,j_\star(\mathcal S_n\otimes \mathcal L_{\chi(f)}))$ (note 
 that one might have to slightly modify what the inclusion $j$ is so that the resulting sheaf is lisse). Here 
 $\mathcal L_\chi$ denotes the Lang sheaf associated to the Legendre character $\chi$ of $\Fp_q$ and 
 $\mathcal L_{\chi(f)}:=f^\star\mathcal L_\chi$. The key property we need is the following 
 ``big $\ell$-adic monodromy'' statement (see~\cite[Th.~7.6.7]{Kat05}).
 
 \begin{theorem}[Katz]\label{KatzBigMonodromy}
  With notation as above, let $N_{d,n}$ denote the rank of the $\ell$-adic sheaf $\mathcal T_{d,n}$.
  \begin{enumerate}
  \item If $n$ is even then the lisse sheaf $\mathcal T_{d,n}(1/2)$ on 
  $\mathcal F_d$ is pure of weight zero 
  and symplectically self-dual with geometric monodromy 
  group $\Sp(N_{d,n})$,
  
  \item if $n$ is odd and $E$ has multiplicative reduction at at least one closed 
  point $\pi\in \Pp^1(\overline{\Fp_q})$,
 then the lisse sheaf $\mathcal T_{d,n}(1/2)$ on $\mathcal F_d$ is pure of weight zero 
  and orthogonally self-dual with geometric monodromy group $\Oo(N_{d,n})$. 
  \end{enumerate}
  Fix an embedding $\iota: \overline{\Qq_\ell}\hookrightarrow \Cc$; for each finite extension 
  $\Fp/\Fp_q$ 
  and each $f\in\mathcal F_d(\Fp)$, let $\Theta_{\Fp,f}$ be the Frobenius conjugacy class in 
  ${\rm USp}(N_{d,n})$ (resp.~in $\Oo(N_{d,n},\Rr)$)
 corresponding to $\mathcal T_{d,n}(1/2)$ if $N_{d,n}$ is even (resp.~odd) at 
 $f\in \mathcal F_d(\Fp)$. Then
  $$
L\left((\Sym^n \rho_{\ell,E/K})\otimes \chi_f,T\right)=\det(1-\Theta_{\Fp,f}T)=
  \iota\left(\det\left(1-T\Frob_{\Fp,f}\mid \mathcal T_{d,n}(1/2)\right)\right)\,,
  $$
  where we recall that $\chi_f$ is the unique nontrivial $K$-automorphism of $K(\sqrt{f})$.
 \end{theorem}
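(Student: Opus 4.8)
\emph{Strategy.} This statement is due to Katz; I sketch the structure of the argument one would follow (it is essentially the content of \cite{Kat02} and \cite[\S7]{Kat05}). The plan has three parts: (1) pin down the classical group inside which the geometric monodromy group $G^{\rm geom}$ of $\mathcal T_{d,n}(1/2)$ must lie, using self-duality and purity; (2) prove that $G^{\rm geom}$ is as big as possible, which is the substantive step; (3) identify the twisted $L$-function with the reversed characteristic polynomial of Frobenius on the stalk via the Grothendieck cohomological formula.

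\emph{Ambient group and self-duality.} First I would determine the symmetry type. The rank-two sheaf ${\rm R}^1\varpi_\star\overline{\Qq_\ell}(1/2)$ is symplectically self-dual by the Weil pairing, so its $n$-th symmetric power is self-dual of sign $(-1)^n$: orthogonally self-dual for $n$ even, symplectically self-dual for $n$ odd. Tensoring with the quadratic Lang sheaf $\mathcal L_{\chi(f)}$ (self-dual of sign $+1$, since $\chi$ has order $2$) and forming the middle extension $j_\star(\mathcal S_n\otimes\mathcal L_{\chi(f)})$ on $\Pp^1$ preserves this symmetry. Because the cup-product pairing on $H^1$ of a curve is anticommutative, Poincar\'e duality flips the sign: $\mathcal T_{d,n}$ is then symplectically self-dual when $n$ is even and orthogonally self-dual when $n$ is odd. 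After the Tate twist by $1/2$ the sheaf is pure of weight $0$ by Deligne's purity theorem \cite{Del80}, so the Frobenius conjugacy classes lie in the compact form ${\rm USp}(N_{d,n})$ (resp.\ in $\Oo(N_{d,n},\Rr)$). In particular $G^{\rm geom}\subseteq\Sp(N_{d,n})$ (resp.\ $\subseteq\Oo(N_{d,n})$).

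\emph{Bigness.} The hard part is to prove equality. First one must show $\mathcal T_{d,n}$ is geometrically irreducible on $\mathcal F_d$; Katz does this by exploiting that ${\rm R}^1\varpi_\star\overline{\Qq_\ell}(1/2)$ has $\SL_2$ geometric monodromy (\cite[\S3.5.5]{Del80}), so $\mathcal S_n$ already has big monodromy, and then controlling how the cohomology varies as the squarefree polynomial $f$ moves in $\mathcal F_d$. The geometric input is then a local-monodromy computation: moving a single zero of $f$ so that it collides with another point of the configuration produces an explicitly controlled local monodromy transformation on $H^1$ — a pseudoreflection arising from a drop of $1$ in the Euler characteristic. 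Here the hypothesis that $E$ has at least one place of multiplicative reduction enters in the odd case: it guarantees that $\mathcal S$, hence $\mathcal S_n$, has unipotent local monodromy of the shape needed to produce a \emph{genuine} reflection (determinant $-1$) in $G^{\rm geom}$, so that one obtains the full orthogonal group and not merely $\SO(N_{d,n})$. One then invokes the classification of irreducible subgroups of $\Sp$ (resp.\ $\Oo$) generated together with such pseudoreflections, combined with the self-duality constraint above, to conclude $G^{\rm geom}=\Sp(N_{d,n})$ (resp.\ $\Oo(N_{d,n})$). This group-theoretic ``big monodromy'' machinery is the main obstacle; everything else is formal once it is in hand.

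\emph{Identification of the $L$-function.} Finally, for each $f$ one matches $L((\Sym^n\rho_{\ell,E/K})\otimes\chi_f,T)$ with the reversed characteristic polynomial of Frobenius on the stalk. By Grothendieck's cohomological formula for the $L$-function of a middle-extension sheaf on a curve, this $L$-function equals the alternating product $\prod_i\det(1-T\,\Frob\mid H^i(\Pp^1_{\overline{\Fp_q}},j_\star(\mathcal S_n\otimes\mathcal L_{\chi(f)})))^{(-1)^{i+1}}$, and the $H^0$ and $H^2$ terms are trivial because the sheaf is geometrically irreducible and nonconstant. Since the stalk of $\mathcal T_{d,n}$ at $f$ is by construction this $H^1$ and Frobenius acts compatibly, the displayed identity follows, with the reciprocal roots on the unit circle by purity. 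Passing to the chosen embedding $\iota$ gives the stated equality of polynomials.
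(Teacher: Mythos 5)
The paper does not actually prove this statement: it is imported wholesale from Katz (the citation given is \cite[Th.~7.6.7]{Kat05}, with the construction of $\mathcal T_{d,n}$ taken from \cite[\S5.2.1]{Kat02}), so there is no internal proof to compare yours against. Your sketch is accurate where it can be checked: the sign bookkeeping is right (the rank-two sheaf is symplectic by the Weil pairing, $\Sym^n$ is symplectic for $n$ odd and orthogonal for $n$ even, tensoring with the order-two Lang sheaf preserves the sign, and the alternating cup product on $H^1$ of $\Pp^1$ flips it, giving exactly the symplectic/orthogonal dichotomy of the statement); purity of weight zero after the half Tate twist is Deligne; and the identification of the twisted $L$-function with $\det(1-T\Frob_{\Fp,f}\mid\mathcal T_{d,n}(1/2))$ via the Grothendieck--Lefschetz formula with vanishing $H^0$ and $H^2$ is the standard argument. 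You also correctly isolate the one genuinely deep step — that the geometric monodromy fills out the full classical group — and correctly flag where the multiplicative-reduction hypothesis enters (to get determinant $-1$, hence $\Oo$ rather than $\SO$, in the odd case). One small caveat: the ``collision of zeros produces a pseudoreflection, then classify'' route you describe is the method of \cite{Kat02}, whereas the reference the authors actually cite, \cite[Th.~7.6.7]{Kat05}, argues via moment computations and Larsen's alternative; both are Katz's own arguments and both reach the stated conclusion, so this is a difference of route inside the black box rather than a gap. As a proof your text is of course only a roadmap — the substantive content remains Katz's — but that is exactly the status the statement has in the paper as well.
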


Let us comment on the last sentence of the statement. The $\Qq$-polynomial 
$L\left((\Sym^n \rho_{\ell,E/K})\otimes\chi_f,T\right)$
 \emph{does not} coincide a priori with the symmetric power $L$-function 
of the representation giving rise to $L(E_f/K,T)$. More precisely the operations 
of ``twisting'' and ``taking the $n$-th symmetric power'' do not commute in general
as the following lemma shows.

\begin{lemma}\label{lem:operationscommute}
With notation as in Theorem~\ref{KatzBigMonodromy} one has for every integer $n\geq 1$,
$$
L((\Sym^n E_f)/K,T)=\begin{cases} & 
L\left((\Sym^n \rho_{\ell,E/K})\otimes\chi_f,T\right)\text{ if $n$ is odd, }\\
& \left(L((\Sym^{n} E)/K,T)\right)  \text{ if $n$ is even. }\end{cases}
$$
\end{lemma}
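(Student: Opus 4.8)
The plan is to argue locally place by place, comparing the Euler factors on both sides of the claimed identity, and then invoking the fact that an $L$-function of the type considered is determined by its local factors at all but finitely many places (together with the functional equation, which forces the remaining factors). Fix a place $v$ of good reduction for both $E$ and $E_f$; almost all $v$ are of this kind. At such a $v$, let $\alpha_v,\beta_v$ be the Frobenius eigenvalues of $E_v$, so that by~\eqref{eq:LocalSym} the local factor of $L((\Sym^n E)/K,T)$ at $v$ is $\prod_{j=0}^n(1-\alpha_v^{n-j}\beta_v^jT^{\deg v})^{-1}$. By~\eqref{eq:local-twist}, the Frobenius eigenvalues of $E_{f,v}$ are $\big(\tfrac{f}{v}\big)\alpha_v$ and $\big(\tfrac{f}{v}\big)\beta_v$, hence the local factor of $L((\Sym^n E_f)/K,T)$ at $v$ is
\[
\prod_{j=0}^n\Big(1-\big(\tfrac{f}{v}\big)^{n-j}\alpha_v^{n-j}\big(\tfrac{f}{v}\big)^{j}\beta_v^{j}T^{\deg v}\Big)^{-1}
=\prod_{j=0}^n\Big(1-\big(\tfrac{f}{v}\big)^{n}\alpha_v^{n-j}\beta_v^{j}T^{\deg v}\Big)^{-1},
\]
since $(n-j)+j=n$. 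Now $\big(\tfrac{f}{v}\big)=\pm1$, so $\big(\tfrac{f}{v}\big)^n$ equals $\big(\tfrac{f}{v}\big)$ if $n$ is odd and equals $1$ if $n$ is even. In the odd case this is exactly the local factor of $L((\Sym^n\rho_{\ell,E/K})\otimes\chi_f,T)$ at $v$ (the twist by $\chi_f$ multiplying each Frobenius eigenvalue of $\Sym^n\rho_{\ell,E/K}$ by $\chi_f(\Frob_v)=\big(\tfrac{f}{v}\big)$), while in the even case it is precisely the local factor of $L((\Sym^n E)/K,T)$ at $v$.

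Next I would deal with the bad places. There are three kinds to check: places dividing $f$, places where $E$ (equivalently $E_f$, away from the factors of $f$) has bad reduction, and—only in the even case—we should make sure the two sides have the same local factors there as well. For the odd case, the representation-theoretic definition of the twisted $L$-function makes the comparison at every place immediate: $L((\Sym^n E_f)/K,T)$ is by definition the $L$-function of $\rho_{\ell,E_f/K}=\Sym^n(\rho_{\ell,E/K}\otimes\chi_f)$, and since $\chi_f$ is a character, $\Sym^n(\rho\otimes\chi_f)\cong(\Sym^n\rho)\otimes\chi_f^{n}=(\Sym^n\rho)\otimes\chi_f$ as $n$ is odd (here $\chi_f^2$ is trivial). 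So in fact the odd case is an identity of $L$-functions coming from an isomorphism of Galois representations, valid at all places including the ramified ones, and needs no "almost all places" argument at all. For the even case, $\chi_f^{n}=\chi_f^{2m}$ is the trivial character, so $\Sym^{n}(\rho\otimes\chi_f)\cong(\Sym^{n}\rho)\otimes\mathbf 1\cong\Sym^{n}\rho$; thus $L((\Sym^n E_f)/K,T)=L((\Sym^n E)/K,T)$ again as an identity of $L$-functions of isomorphic representations. This reduces the whole lemma to the purely formal statement $\Sym^n(\rho\otimes\chi)\cong(\Sym^n\rho)\otimes\chi^{n}$ for a one-dimensional $\chi$, combined with $\chi_f^2=\mathbf 1$.

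The main (minor) obstacle is just being careful about what "$L((\Sym^n E_f)/K,T)$" means: one must use that, for the quadratic twist, the Tate module satisfies $V_\ell(E_f)\cong V_\ell(E)\otimes\chi_f$ as $G_K$-modules (this is the content of~\eqref{eq:local-twist} at the level of representations, and is recorded in the discussion preceding the lemma), so that $\Sym^n(V_\ell(E_f))\cong\Sym^n(V_\ell(E))\otimes\chi_f^{\otimes n}$. After that the argument is the two-line character computation above: $\chi_f^{\otimes n}\cong\chi_f$ for $n$ odd and $\chi_f^{\otimes n}\cong\mathbf 1$ for $n$ even, because $\chi_f$ has order dividing $2$. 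No subtlety about inertia invariants or conductors arises, since tensoring a representation by an everywhere-unramified... wait—$\chi_f$ is ramified at the places dividing $f$; but this causes no problem because the isomorphism of representations is an isomorphism of $G_K$-modules, hence induces an isomorphism of $I_v$-invariants and therefore an equality of the local Euler factors~\eqref{def:LFunction} at every $v$, ramified or not. I would therefore present the proof as: (1) recall $V_\ell(E_f)\cong V_\ell(E)\otimes\chi_f$; (2) take $\Sym^n$ and use multiplicativity of $\Sym^n$ under twisting by a character; (3) use $\chi_f^2=\mathbf 1$ to split into the two cases; (4) conclude by the definition~\eqref{def:LFunction} of the $L$-function from the representation.
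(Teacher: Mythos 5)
Your proof is correct, but it takes a genuinely different (and in fact stronger) route than the paper's. The paper's proof stays at the level of $L$-functions: it compares the explicit local factors of $\Sym^n(\rho_{\ell,E/K}\otimes\chi_f)$ and $(\Sym^n\rho_{\ell,E/K})\otimes\chi_f$ at the common unramified places, using \eqref{eq:LocalSym} and the generalization of \eqref{eq:local-twist}, exactly as in your first paragraph, and then invokes the Chebotarev density theorem to conclude that agreement of Euler factors at all unramified places forces the two $L$-functions (indeed the two representations of the fundamental group of a common maximal open set of lissit\'e) to coincide. You instead promote the comparison to an isomorphism of $G_K$-representations from the start: $V_\ell(E_f)\cong V_\ell(E)\otimes\chi_f$, the formal identity $\Sym^n(V\otimes\chi)\cong(\Sym^n V)\otimes\chi^{\otimes n}$ for a one-dimensional $\chi$, and $\chi_f^{\otimes 2}\cong\mathbf{1}$. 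This buys you equality of the local factors of \eqref{def:LFunction} at \emph{every} place, ramified ones included, since an isomorphism of $G_K$-modules identifies the $I_v$-invariants; no density argument is needed, and the "almost all places" step becomes superfluous. The only input you use beyond linear algebra is the standard fact that the Tate module of a quadratic twist is the twist of the Tate module by $\chi_f$, which the paper itself records (in slightly weaker form) in the discussion around \eqref{eq:local-twist}. Both arguments are valid; yours is more economical and gives the isomorphism of representations directly rather than deducing it a posteriori.
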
 

\begin{proof}
We want to compare the $L$-functions of the representations
 $$
 \Sym^n\left(\rho_{\ell,E/K}\otimes \chi_f\right)
 \text{ and }
 \left(\Sym^n \rho_{\ell,E/K}\right)\otimes\chi_f\,.
  $$
  In each case the unramified places are the places of good reduction of $E/K$ 
  that do not correspond to an irreducible factor of $f$. Let $v$ be a common unramified place for 
  the two $L$-functions we consider.
Combining~\eqref{eq:LocalSym} and the straightforward generalization of~\eqref{eq:local-twist}  
to all symmetric powers $\Sym^n \rho_{\ell,E/K}$ we see that 
the local factor at $v$ of the $2m$-th (resp.~$(2m+1)$-th)
 symmetric power of the quadratic twist of $E$ by $f$ is exactly 
the same as the $2m$-th (resp.~$(2m+1)$-th) symmetric power of the original curve $E/K$ 
(resp. of the twist $E_f/K$).

By Chebotarev's Density Theorem it is enough to check the local factors of both $L$-functions
coincide at all 
unramified places to deduce that the $L$-functions are the same (or indeed that the underlying 
representations of the \'etale fundamental group of a maximal open subset on which 
they both are unramified are isomorphic). In the context of $L$-functions of elliptic curves 
over function fields this type of argument is used e.g.~in~\cite[Rem.~$7.0.5$]{Kat02}.
\end{proof}



\begin{remark}\label{rem:notwist}
(i) For our quadratic twist family of $L$-functions the lemma explains why we can only 
hope for the simultaneous independence of zeros when taking (a finite number of) \emph{odd} 
symmetric power $L$-functions. 

(ii) For sieving purposes it will be convenient in the case where $n$ is even 
  (i.e.~$\mathcal T_{d,n}$ is symplectically self-dual), \emph{not} 
to perform the ``half Tate twist'' as described in the statement of Theorem~\ref{KatzBigMonodromy}.
 The reason is that it is convenient to have 
an arithmetic monodromy group 
that embeds in the symplectic \emph{similitudes} ${\rm CSp}(2g)$ so that we can choose the 
multiplicator of the similitudes as a sieving parameter. 
\end{remark}

\subsubsection{The pullback family}

Let us now turn to the interpretation of  $L^{\rm new}((\Sym^n E^f)/\Fp_q(C),T)$ (seen as the  $\Qq$-polynomial  
defined in \S\ref{section:pullback} starting with the elliptic curve given by~\eqref{eq:gen-weierstrass}) 
as being the characteristic polynomial of the (global) geometric Frobenius morphism acting on an $\ell$-adic 
cohomology space. The construction once again is due to Katz. Our exposition
 follows closely~\cite[\S $7.3$]{Kat05} in which much more details (together with 
 full proofs and other applications) are given.
 
 As an auxiliary piece of data we fix an effective divisor $D$ on $C/\Fp_q$ satisfying 
 $\deg D\geq 2g+3$ where we recall that $g$ is the genus of $C/\Fp_q$. Let $S\subseteq {\bf A}^1$ 
 be the locus of bad reduction of the curve $E$ given by~\eqref{eq:gen-weierstrass}. Similarily to 
 the case of the other family considered we assume $S$ contains at least one place of multiplicative reduction and that $E/\Fp_q(t)$ has non-constant $j$-invariant
  (see~\cite[$(7.3.2)$]{Kat05} where Katz explicitly makes these assumptions). 
 We let 
 $U_{D,S}$ be the dense open subset of the Riemann--Roch space $\mathcal L(D)$ whose 
 $\overline{\Fp_q}$-valued points consists of those $f\in\mathcal L(D)/\overline{\Fp_q}$ 
 whose divisor of poles is $D$ and which are finite \'etale over $S$. In~\cite[\S$7.3.12$]{Kat05} 
 it is stated that for any $n\geq 1$ there is a lisse $\overline{\Qq}_\ell$-sheaf $\mathcal M_n$
  on $U_{D,S}$ (the fact that we assumed 
 that $p\geq 5$ plays a role here) such that for any finite extension $\Fp/\Fp_q$ and any 
 $f\in U_{D,S}(\Fp)$ one has
 $$
 L^{\rm new}((\Sym^n E^f)/\Fp_q(C),T)=\det\left(1-T\Frob_{\Fp,f}\mid \mathcal M_n\right)\,.
 $$
 Moreover one has the following big monodromy statement 
 (see~\cite[Th.~$7.3.14$, $7.3.16$]{Kat05}) of the same type as 
 Theorem~\ref{KatzBigMonodromy}.
 
 \begin{theorem}[Katz]\label{KBM2}
 Let $N_n$ be the rank of the sheaf $\mathcal M_n$.
 
For any $n\geq 2$ the geometric monodromy group of $\mathcal M_n$ is $\Oo(N_n)$ if 
$n$ is odd and ${\rm Sp}(N_n)$ if $n$ is even. In both cases for any finite extension 
$\Fp/\Fp_q$ and any $f\in U_{D,S}(\Fp)$ the global 
geometric Frobenius $\Frob_{\Fp,f}$ acts as an isometry with respect to the 
associated bilinear structure.

Assuming further that $N_1\geq 9$ the geometric monodromy 
group of $\mathcal M_1$ is $\Oo(N_1)$.
\end{theorem}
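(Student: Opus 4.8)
These are Theorems~$7.3.14$ and~$7.3.16$ of \cite{Kat05}, and the argument runs parallel to the one behind Theorem~\ref{KatzBigMonodromy}; I describe here the shape of Katz's proof rather than reproving it. The plan has three parts: pin down the autoduality, locate a small unipotent element in the geometric monodromy, and invoke a classification of irreducible subgroups of classical groups. For the autoduality, one observes that $\mathcal M_n$ is (a truncation to the ``new part'' of) $H^1$ of a curve over $\overline{\Fp_q}$ with coefficients in $\Sym^n$ of the rank-two weight-one sheaf $\mathrm{R}^1\varpi_\star\overline{\Qq_\ell}$ pulled back along $f$. Middle cohomology carries a perfect alternating cup-product pairing, while $\Sym^n$ of a symplectically self-dual rank-two sheaf is symplectically self-dual for $n$ odd and orthogonally self-dual for $n$ even; combining the two pairings yields a perfect pairing on $\mathcal M_n$ which is alternating when $n$ is odd and symmetric when $n$ is even. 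Hence the geometric monodromy group is contained in $\Sp(N_n)$ (resp.\ $\Oo(N_n)$) when $n$ is even (resp.\ odd), and the global geometric Frobenius, respecting this pairing up to the relevant Tate twist, acts as an isometry.

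To upgrade this containment to an equality I would follow Katz's standard recipe. \emph{Step 1:} geometric irreducibility of $\mathcal M_n$, which descends from irreducibility of $\Sym^n\mathrm{R}^1\varpi_\star\overline{\Qq_\ell}$ and ultimately from the $\SL_2$ geometric monodromy of the universal elliptic sheaf recalled in \cite[\S$3.5.5$]{Del80}, together with the fact that the ``new part'' operation preserves geometric irreducibility for the parametrizations considered --- this is where $\deg D\geq 2g+3$ enters, guaranteeing that a generic $f\in\mathcal L(D)$ has exactly divisor of poles $D$ and is finite \'etale over $S$. \emph{Step 2:} a local analysis at the places where $f$ meets the bad locus $S$ and at the poles of $f$, exhibiting in the image of a local inertia group a genuine reflection (in the orthogonal case) or a genuine transvection (in the symplectic case); the multiplicative-reduction hypothesis on $S$ is precisely what is needed here, since a place of multiplicative reduction contributes a one-dimensional unipotent block to the local monodromy. \emph{Step 3:} the group-theoretic classification --- the Larsen alternative together with the theorems describing irreducible subgroups of $\Oo$ and $\Sp$ generated by reflections or by transvections --- which forces an irreducible subgroup of the relevant classical group that contains such an element and is neither finite, nor tensor-decomposable, nor imprimitive, to be the whole classical group; the tensor-decomposable and imprimitive alternatives are excluded by the local structure of the sheaf together with a rank count.

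For $n=1$ the sheaf $\mathcal M_1$ is the new part of $H^1$ of the pulled-back elliptic surface over $U_{D,S}$, and the same three steps apply; the only additional point is that the classification of irreducible reflection subgroups of an orthogonal group has finitely many small-rank exceptions (finite Weyl-type groups and a handful of sporadic cases), all of which are ruled out once $N_1\geq 9$, whence the hypothesis.

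The genuinely hard part --- Katz's real input rather than bookkeeping --- is Step~2 combined with the primitivity half of Step~3: producing the right \emph{single} unipotent element (a true reflection or transvection, with the minimal possible drop in rank, not merely some unipotent element fixing a larger subspace) out of the local monodromy of $\Sym^n\mathrm{R}^1\varpi_\star\overline{\Qq_\ell}$ after the twisting and ``new part'' truncation, and at the same time excluding the tensor-induced and imprimitive possibilities. This is where the explicit description of local monodromy at the bad places, and the precise geometry of the Riemann--Roch parametrization, are indispensable.
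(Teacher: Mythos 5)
The paper does not prove this statement; it simply quotes Theorems~$7.3.14$ and~$7.3.16$ of~\cite{Kat05}, so there is no ``paper's proof'' to compare yours against, only the citation. Your proposal correctly identifies this and gives a reasonable high-level sketch of Katz's argument: establish the self-duality, produce a reflection or transvection from local monodromy at a multiplicative place, and invoke a classification theorem to conclude the monodromy is the full classical group, with $\deg D \geq 2g+3$ giving a good Riemann--Roch parametrization and $N_1 \geq 9$ excluding small-rank orthogonal exceptions.

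However, there is a parity slip in your autoduality paragraph. You correctly note that $\Sym^n$ of the symplectically self-dual rank-two sheaf is symplectically self-dual for $n$ odd and orthogonally self-dual for $n$ even, and that the cup-product pairing on $H^1$ is alternating in the degree. But the Koszul sign rule then gives: alternating (from $H^1$) tensored with alternating (from $\Sym^n$, $n$ odd) is \emph{symmetric}, and alternating tensored with symmetric ($n$ even) is \emph{alternating}. So the pairing on $\mathcal M_n$ is symmetric when $n$ is odd and alternating when $n$ is even --- the opposite of what you wrote (``alternating when $n$ is odd and symmetric when $n$ is even''). Your very next clause, ``Hence the geometric monodromy group is contained in $\Sp(N_n)$ (resp.\ $\Oo(N_n)$) when $n$ is even (resp.\ odd),'' is the correct conclusion and is what the theorem asserts, but it contradicts the sentence immediately preceding it. The inference only follows once the parity of the combined pairing is stated the right way round.
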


\subsection{Big monodromy modulo $\ell$}

 Our sieve setting imposes knowledge of the reduction modulo $\ell$ 
 of the $L$-functions we consider modulo many primes $\ell$. In this section we state 
 a result of
  big monodromy modulo $\ell$ analogous to (and deduced from) 
 Theorem~\ref{KatzBigMonodromy} and Theorem~\ref{KBM2}. The other ingredient 
 is the celebrated Strong Approximation Theorem~\cite{MVW84} of
 Matthews, Vaserstein and Weisfeiler, enabling one to obtain 
 big monodromy modulo $\ell$ for all but finitely many primes $\ell$.
 An alternative method would consist in exploiting a Theorem of Larsen~\cite{Lar95} 
 that would produce a set of ``good primes'' of density $1$.

Both these methods (Strong Approximation and Larsen's argument) are
 explained in detail in~\cite[\S$7$ and \S$9$]{Kat12}. Here we merely 
quote Katz's argument and refer the reader to \emph{loc.~cit.~}for the details.

Once more the argument is simpler for sheaves exhibiting  symplectic symmetry 
as opposed to sheaves with orthogonal symmetry. The reason is topological: the symplectic 
group ${\rm Sp}(2g)$ is a simply connected algebraic group while neither 
$\Oo(N)$ nor its connected component ${\rm SO}(N)$ are. Thus while Stong Approximation 
may be applied directly to a Zariski dense subgroup in the former case one has to go 
to the simply connected cover ${\rm Spin}(N)$ of ${\rm SO}(N)$ first in the latter case.

Let $\mathcal H_n$ (resp. $U$) be either of the sheaves $\mathcal T_{d,n}$ 
(resp. the parameter variety  $\mathcal F_d$)
of Theorem~\ref{KatzBigMonodromy} or $\mathcal M_n$ (resp. the parameter 
variety $U_{D,S}$) of Theorem~\ref{KBM2} . Let 
$\G/\Zz$ be either of the groups $\Sp({\rm rk}\, \mathcal H_n)$ if $n$ is even or 
$\Oo({\rm rk}\, \mathcal H_n)$ if $n$ is odd.
Katz explains in~\cite[\S $9$ and proof of Th.~$3.1$]{Kat12} that there exists an integer 
$N_0\geq 1$ and a sheaf $\mathcal H_{\Zz[1/{N_0]}}$ of $\Zz[1/{N_0}]$-modules such that for 
$\ell\nmid N_0$
the $\ell$-adic geometric monodromy of $\mathcal H_n$ is the $\ell$-adic closure in 
$\G(\Zz_\ell)$ of a finitely generated Zariski-dense 
subgroup $\Gamma_{N_0}\subseteq\G(\Zz[1/{N_0}])$. (In \emph{loc.~cit.~}Katz only considers the case where $\mathcal H_n$ has orthogonal
 symmetry but the same argument works in the symplectic case.) We would like 
to apply Strong Approximation to $\Gamma_{N_0}$ but again this is only 
directly possible in case $\G=\Sp({\rm rk}\,\mathcal H_n)$. In~\cite[\S9]{Kat12} 
Katz explains a way (for which he acknowledges R. Livn\'e) to circumvent 
this difficulty by going to the spin double cover of ${\rm SO}({\rm rk}\,\mathcal H_n)$. 

Let us state the outcome of the above line of reasoning.

\begin{proposition}\label{prop:monodromymodell}
With notation as above let $\Gamma_{N_0, {\rm mod} \ell}$ denote the image in $\G(\Fp_\ell)$ 
 of the geometric monodromy group of $\mathcal H_n$. Then 
 this group is also the image by reduction modulo $\ell$ of the subgroup 
 $\Gamma_{N_0} \subseteq\G(\Zz[1/{N_0}])$. Moreover
\begin{itemize}
 \item if $\ell\nmid N_0$ and $n$ is even then 
 $\Gamma_{N_0, {\rm mod} \ell}=\Sp({\rm rk} \mathcal H,\Fp_\ell)$,
\item  if $\ell\nmid N_0$ and $n$ is odd then 
$\Gamma_{N_0, {\rm mod} \ell}
\supset \Omega({\rm rk} \mathcal H,\Fp_\ell)$ and the underlying quadratic 
form is obtained by reduction modulo $\ell$ of a quadratic form over $\Zz[1/N_0]$.
\end{itemize}
\end{proposition}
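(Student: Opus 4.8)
The plan is to assemble the ingredients recalled just above: Katz's description of the $\ell$-adic geometric monodromy of $\mathcal H_n$, the Strong Approximation Theorem, and --- in the orthogonal case --- the passage to the spin double cover. Throughout, write $G_\ell$ for the $\ell$-adic geometric monodromy group of $\mathcal H_n$, a compact subgroup of $\G(\Zz_\ell)$ which, by the discussion above, is the $\ell$-adic closure of the finitely generated Zariski-dense subgroup $\Gamma_{N_0}\subseteq\G(\Zz[1/N_0])$. The first assertion of the proposition (that the image of $G_\ell$ in $\G(\Fp_\ell)$ equals the image of $\Gamma_{N_0}$) is a formal point: the reduction map ${\rm red}_\ell\colon\G(\Zz_\ell)\to\G(\Fp_\ell)$ is continuous with finite, hence discrete, target, so for any subset $A$ one has ${\rm red}_\ell(\overline A)\subseteq\overline{{\rm red}_\ell(A)}={\rm red}_\ell(A)$; taking $A=\Gamma_{N_0}$ and $\overline A=G_\ell$ gives the desired equality.

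For $n$ even one has $\G=\Sp({\rm rk}\,\mathcal H_n)$, a simply connected, almost simple group scheme over $\Zz[1/N_0]$, and $\Gamma_{N_0}$ is a finitely generated Zariski-dense subgroup of $\G(\Zz[1/N_0])$. I would invoke the Strong Approximation Theorem of Matthews, Vaserstein and Weisfeiler~\cite{MVW84}, in the form used by Katz in~\cite[\S9]{Kat12}: since $N_0$ is chosen so as to absorb the finitely many exceptional primes of Strong Approximation, the $\ell$-adic closure $G_\ell$ of $\Gamma_{N_0}$ is all of $\Sp({\rm rk}\,\mathcal H_n,\Zz_\ell)$ for every $\ell\nmid N_0$. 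Reducing modulo $\ell$ and using the first point, $\Gamma_{N_0,{\rm mod}\,\ell}=\Sp({\rm rk}\,\mathcal H_n,\Fp_\ell)$.

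For $n$ odd, Theorem~\ref{KatzBigMonodromy} (resp.\ Theorem~\ref{KBM2}) gives that $\Gamma_{N_0}$ is Zariski-dense in $\Oo({\rm rk}\,\mathcal H_n)$, whose identity component $\SO({\rm rk}\,\mathcal H_n)$ is semisimple but not simply connected, so Strong Approximation cannot be applied to it directly. Following the device of Katz in~\cite[\S9]{Kat12}, which he attributes to Livn\'e, I would pass to the two-fold cover ${\rm Spin}({\rm rk}\,\mathcal H_n)\to\SO({\rm rk}\,\mathcal H_n)$: after possibly enlarging $N_0$ (to discard the prime $2$ and the finitely many bad primes), the relevant part of $\Gamma_{N_0}$ lifts to a finitely generated Zariski-dense subgroup of ${\rm Spin}({\rm rk}\,\mathcal H_n)(\Zz[1/N_0])$; since ${\rm Spin}$ is simply connected and almost simple, Strong Approximation now shows that its reduction modulo $\ell$ is all of ${\rm Spin}({\rm rk}\,\mathcal H_n,\Fp_\ell)$ for $\ell\nmid N_0$. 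Pushing this image forward along ${\rm Spin}\to\SO$ produces exactly the derived subgroup of $\SO({\rm rk}\,\mathcal H_n,\Fp_\ell)$, that is $\Omega({\rm rk}\,\mathcal H_n,\Fp_\ell)$, whence $\Gamma_{N_0,{\rm mod}\,\ell}\supseteq\Omega({\rm rk}\,\mathcal H_n,\Fp_\ell)$. For the last clause, the self-dual pairing on $\mathcal H_n$ producing the quadratic form over $\overline{\Qq_\ell}$ is already defined on the integral model $\mathcal H_{\Zz[1/N_0]}$, so its reduction modulo $\ell$ is a quadratic form over $\Fp_\ell$ preserved by $\Gamma_{N_0,{\rm mod}\,\ell}$, which is the form in the statement.

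The step I expect to be the genuine obstacle is the orthogonal one: reconciling the fact that $\SO$ is not simply connected with the hypotheses of Strong Approximation. Lifting a Zariski-dense subgroup of $\SO$ to ${\rm Spin}$ \emph{over the ring} $\Zz[1/N_0]$ --- rather than over a field, where it would be routine --- must be carried out carefully because of the $\mu_2$-ambiguity, and this is precisely what forces the possible enlargement of $N_0$ and the bookkeeping about which primes have to be excluded; this is the content of Katz's argument in~\cite[\S9]{Kat12}, which I would quote rather than reprove. The symplectic case and the first assertion are, by contrast, either a direct citation or a formal topological argument.
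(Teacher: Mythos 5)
Your proposal is correct and follows essentially the same route as the paper, which likewise derives the statement by combining Katz's integral model $\mathcal H_{\Zz[1/N_0]}$ and Zariski-dense subgroup $\Gamma_{N_0}$ from \cite[\S9 and proof of Th.~3.1]{Kat12} with the Strong Approximation Theorem of \cite{MVW84} in the symplectic case and the Livn\'e spin-cover device in the orthogonal case, deferring the details of the latter to Katz. The only substantive addition the paper makes beyond what you wrote is the remark immediately following the proposition, namely that Katz's construction produces a single ``global'' quadratic form over $\Zz[1/N_0]$ whose reductions give the forms modulo $\ell$ uniformly in $\ell$ --- a point you also capture in your last clause.
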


 The second part of the statement has to be made more explicit. To apply 
 Theorem~\ref{product-sieve-statement} we need to have a control on the discriminant 
 of the quadratic form  modulo $\ell$ for a positive density of primes.
  The above argument of Katz (from which the statement is deduced)
   asserts that, for $n$ odd and as $\ell$ varies ($\ell\nmid N_0$), the $\ell$-adic orthogonal group attached to $\mathcal H_n$ 
 forms the group of $\ell$-adic points of a single ``global'' quadratic form. This provides us with the control 
 we need on the discriminant of the quadratic forms modulo $\ell$. 
 If $\Delta\in\Qq$
  is the discriminant of the ``global'' quadratic form 
 then $\Delta$ modulo $\ell$ is a square for a density $1/2$ of the primes exactly 
 if $\Delta\in\Zz$ is not a square (otherwise the density is $1$, of course). This has to be done 
 for several quadratic forms simultaneously. The following section provides the precise property 
 we need.


\subsection{End of the proof} 

We first state a few preparatory results that will help us pick the set of primes of positive density 
needed to apply Theorem~\ref{product-sieve-statement}. The first lemma requires an application of the prime number theorem.

\begin{lemma}\label{lem:pnt}
For any fixed $A,B\geq 1$ and uniformly for $0<|a|\leq (\log x)^A$ we have that
$$ \sum_{ p \leq x} \left(  \frac a p\right) \geq -c_A \frac x{(\log x)^B}, $$
where $c_A$ is a positive constant which depends on $A$ only.
\end{lemma}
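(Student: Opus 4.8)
The plan is to reduce the sum over $p$ to a sum over primes of a genuine Dirichlet character, apply the prime number theorem for the associated $L$-function, and control the single potentially harmful (Siegel) zero. Write $a=a_0m^2$ with $a_0$ squarefree. If $a_0=1$, then $a$ is a positive perfect square, $\left(\frac ap\right)=1$ for $p\nmid a$ and $0$ otherwise, so $\sum_{p\le x}\left(\frac ap\right)=\pi(x)+O(\log\log x)\ge 0$ and the bound holds for any $c_A>0$. If $a_0\neq 1$, quadratic reciprocity gives $\left(\frac ap\right)=\chi_a(p)$ for every prime $p\nmid 2a$, where $\chi_a$ is a fixed non-principal real primitive Dirichlet character of conductor $q_a\mid 4|a_0|$, hence $q_a\le 4|a|\le 4(\log x)^A$. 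Since $\left(\frac ap\right)$ and $\chi_a(p)$ differ only at the $O(\log\log x)$ primes dividing $2a$, where both are $O(1)$, it suffices to bound $\bigl|\sum_{p\le x}\chi_a(p)\bigr|$; and as the full sum is $O_A(1)$ while $x/(\log x)^B$ stays bounded away from $0$ on any bounded $x$-range, I may assume $x$ large in terms of $A$ and $B$.

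Next I would bring in the prime number theorem for $\psi(x,\chi_a):=\sum_{n\le x}\Lambda(n)\chi_a(n)$. For $x$ large one has $q_a\le 4(\log x)^A\le \exp(c_0\sqrt{\log x})$ (with $c_0$ the absolute constant from the classical zero-free region), so the explicit formula gives, for an absolute $c>0$,
\[
\psi(x,\chi_a) = -\mathbf{1}\{\beta_1\text{ exists}\}\,\frac{x^{\beta_1}}{\beta_1} + O\!\left(x\exp(-c\sqrt{\log x})\right),
\]
where $\beta_1$, when present, is the unique simple real zero of $L(s,\chi_a)$ with $\beta_1>1-c_0/\log q_a$. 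To bound the exceptional term I would invoke Siegel's theorem with $\varepsilon=1/(2A)$, obtaining an ineffective $c(\varepsilon)>0$ with $1-\beta_1\ge c(\varepsilon)q_a^{-\varepsilon}\ge c(\varepsilon)/(2\sqrt{\log x})$ (using $q_a^{1/(2A)}\le 2(\log x)^{1/2}$), so that $x^{\beta_1}/\beta_1\ll x\exp(-c_A'\sqrt{\log x})$ with $c_A'>0$ depending only on $A$. Hence $|\psi(t,\chi_a)|\ll_A t\exp(-c_A''\sqrt{\log t})$ for $t$ large, and by standard Chebyshev estimates the same holds for $\theta(t,\chi_a):=\sum_{p\le t}\chi_a(p)\log p$, while trivially $|\theta(t,\chi_a)|\le\theta(t)\ll t$ for all $t$.

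Finally I would transfer this to the prime sum by partial summation,
\[
\sum_{p\le x}\chi_a(p) = \frac{\theta(x,\chi_a)}{\log x} + \int_2^x \frac{\theta(t,\chi_a)}{t(\log t)^2}\,dt,
\]
splitting the integral at $\exp((\log q_a)^2)$ — below which the crude bound $|\theta(t,\chi_a)|\ll t$ contributes only $\exp(O_A((\log\log x)^2))=x^{o(1)}$ — and at $\sqrt x$, and using on the remaining range the estimates just obtained; every term is then $\ll_A x\exp(-c_A'''\sqrt{\log x})$, which is $\le c_A\,x/(\log x)^B$ for $x$ large since $B$ is fixed. Enlarging $c_A$ absorbs the bounded range of $x$ together with the $O(\log\log x)$ discrepancy between $\left(\frac a\cdot\right)$ and $\chi_a$, which finishes the proof.

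I expect the main obstacle to be the Siegel zero: it is precisely what forces $c_A$ to be ineffective and what dictates the choice $\varepsilon=1/(2A)$, so the hypothesis $|a|\le(\log x)^A$ is exactly calibrated to let Siegel's bound on $1-\beta_1$ beat any fixed power of $\log x$. Everything else — the reduction to a Dirichlet character, the passage from $\psi$ to $\theta$ to $\sum_{p\le x}\chi_a(p)$ — is routine.
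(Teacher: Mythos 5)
Your proof is correct and takes essentially the same route as the paper's: reduce $\big(\tfrac a\cdot\big)$ to a genuine real Dirichlet character of modulus $\ll|a|\le(\log x)^A$ (the paper shifts $a\mapsto 4a$ rather than extracting the squarefree part, but this is the same quadratic-reciprocity fact) and then invoke Siegel's theorem in the form of Siegel--Walfisz. The paper treats Siegel--Walfisz as a black box where you unpack it through the explicit formula, the Siegel zero with $\varepsilon=1/(2A)$, and partial summation; but there is no difference in substance.
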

\begin{proof}
It is a well known fact that for any $b\neq 0$ with $b\not \equiv 3 \bmod 4$, the function $\big( \tfrac b {\cdot} \big)$ is a Dirichlet character. 
Taking $b=4a$, we note that
$$ \sum_{ p \leq x} \left(  \frac {4a} p\right) = \sum_{ p \leq x} \left(  \frac a p\right)+O(1),$$
since the only prime $p$ for which $\big(  \tfrac {4a} p\big)$ is not necessarily equal to
 $\big(  \tfrac {a} p\big)$ is $p=2$. Since $\big(  \tfrac {4a} {\cdot}\big)$ is a Dirichlet character, we obtain from Siegel's Theorem that
$$ \sum_{ p \leq x} \left(  \frac a p\right) = \eps_{a}\text{Li}(x)+ O_A\left( \frac x{(\log x)^A}\right), $$
where $\eps_a$ equals $1$ when the character is principal, and is zero otherwise. The result follows.
 
\end{proof}

\begin{lemma}\label{lem:pickLambda1}
Let $A$ be a ring satisfying $\Zz\subseteq A\subseteq \Qq$ such that only finitely 
many primes are invertible in $A$ (i.e.~$A$ is of the form $\Zz[1/N_0]$ for some integer $N_0$). 
Let $k\geq 1$ be an integer and let $(V_j,\Psi_j)_{1\leq j\leq k}$ be a sequence of 
(free of \emph{even} rank $r_j$)
  non-degenerate quadratic 
$A$-modules. For each $j$ let $\Delta_j$ be the discriminant of 
$(V_j,\Psi_j)$. For every odd prime $\ell\not\in A^\times$ coprime to $\prod_j\Delta_j$
 let $(V_{j,\ell},\Psi_{j,\ell})$ be the non-degenerate quadratic 
$\Fp_\ell$-vector space obtained by reduction modulo $\ell$. The lower density of primes 
$\ell$ for which the quadratic $\Fp_\ell$-vector spaces are \emph{simultaneously split}  
is at least $2^{-k}$.
\end{lemma}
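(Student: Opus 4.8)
The plan is to reduce the statement to a counting problem about Legendre symbols and then invoke Lemma~\ref{lem:pnt}. First I would recall the classification of even-dimensional non-degenerate quadratic spaces over a finite field of odd characteristic (already used before~\eqref{eq:orderorthogonal} and in Lemma~\ref{lem:count}): such a space of dimension $r$ is \emph{split} exactly when $(-1)^{r/2}$ times its discriminant is a square. For each $j$ let $D_j$ denote the class of $(-1)^{r_j/2}\Delta_j$ in $\Qq^\times/(\Qq^\times)^2$; since $\Psi_j$ is non-degenerate we have $\Delta_j\neq 0$, so $D_j$ is represented by a nonzero integer, and I would fix once and for all a squarefree integer $d_j$ representing $D_j$. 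Since the denominator of $\Delta_j$ only involves primes inverted in $A$, for all but finitely many primes $\ell$ --- in particular for every odd $\ell\notin A^\times$ coprime to $\prod_j\Delta_j$ --- one has $\ell\nmid d_j$ for all $j$, and the reduction $(V_{j,\ell},\Psi_{j,\ell})$ is split if and only if $\big(\tfrac{d_j}{\ell}\big)=1$.

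Then I would count. For $x\geq 2$, writing $\chi_j:=\big(\tfrac{d_j}{\cdot}\big)$ and, for a subset $S\subseteq\{1,\dots,k\}$, $d_S:=\prod_{j\in S}d_j$ and $\chi_S:=\prod_{j\in S}\chi_j=\big(\tfrac{d_S}{\cdot}\big)$, and using that $\tfrac12\big(1+\chi_j(\ell)\big)$ is the indicator of $\chi_j(\ell)=1$ for $\ell\nmid d_j$, expansion of the product over $j$ gives
\[
\#\bigl\{\ell\leq x:\ (V_{j,\ell},\Psi_{j,\ell})\text{ is split for all }j\bigr\}=\frac{1}{2^k}\sum_{S\subseteq\{1,\dots,k\}}\ \sum_{\ell\leq x}\chi_S(\ell)\ +\ O(1),
\]
the $O(1)$ absorbing the finitely many primes excluded by the hypotheses. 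The term $S=\varnothing$ contributes $\pi(x)$. If $d_S$ is a perfect square the corresponding inner sum equals $\pi(x)+O(1)\geq 0$; if $d_S$ is not a perfect square then, $d_S$ being one of finitely many fixed nonzero integers, for $x$ large one has $0<|d_S|\leq(\log x)^A$ and Lemma~\ref{lem:pnt} (applied to $4d_S$, which alters the sum by $O(1)$ and makes $\chi_S$ a genuine Dirichlet character) gives $\sum_{\ell\leq x}\chi_S(\ell)\geq -c\,x/(\log x)^2$ for a constant $c$ depending only on the $d_j$. Hence
\[
\#\bigl\{\ell\leq x:\ (V_{j,\ell},\Psi_{j,\ell})\text{ is split for all }j\bigr\}\ \geq\ \frac{1}{2^k}\Bigl(\pi(x)-2^k\,c\,\frac{x}{(\log x)^2}\Bigr)+O(1)=\frac{\pi(x)}{2^k}\bigl(1+o(1)\bigr),
\]
so dividing by $\pi(x)$ and letting $x\to\infty$ yields a lower density at least $2^{-k}$.

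I do not expect a serious obstacle here. The points requiring care are the bookkeeping of the split criterion --- the sign $(-1)^{r_j/2}$ and the passage from $A$-module discriminants to squarefree integer representatives --- and the observation that a subset $S$ with $d_S$ a perfect square contributes a nonnegative, rather than a possibly negative, amount; it is precisely these \emph{relation} terms, corresponding to products $D_{j_1}\cdots D_{j_s}$ that are squares, that are responsible for the inequality (as opposed to equality) $2^{-k}$. Equivalently, one could argue Chebotarev-theoretically: the quadratic characters $\chi_1,\dots,\chi_k$ generate a subgroup of order $2^{k'}$ with $k'\leq k$ inside the group of Dirichlet characters, and the density of primes $\ell$ that are split in all the $(V_{j,\ell},\Psi_{j,\ell})$ equals $2^{-k'}\geq 2^{-k}$.
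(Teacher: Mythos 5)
Your argument is correct and follows the paper's own proof essentially verbatim: set $\Delta_j' = (-1)^{r_j/2}\Delta_j$, bound the count from below by $\sum_{\ell\leq x}\prod_j \tfrac{1}{2}\bigl(1 + \bigl(\tfrac{\Delta_j'}{\ell}\bigr)\bigr)$, expand the product over subsets $S\subseteq\{1,\dots,k\}$, and control each nontrivial inner sum with Lemma~\ref{lem:pnt}. The extra bookkeeping you supply (squarefree representatives $d_j$, the observation that terms with $d_S$ a perfect square contribute nonnegatively, and the parenthetical Chebotarev reformulation) is sound and makes explicit some points the paper leaves implicit, but it is the same route, not a different one.
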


\begin{proof}
Let $\Delta'_j=(-1)^{r_j/2}\Delta_j$. The question is that of the density of primes $\ell$
for which the $\Delta_j'$'s are simultaneously squares modulo $\ell$. In order to give a lower bound on this density, we note that
$$
 \# \{ \ell \leq x : \Delta'_j \equiv \square\, (\bmod \ell) \hspace{.2cm} \forall j \}
  \geq \sum_{\ell \leq x} \frac{\left( 1+ \left( \frac {\Delta_1'}{\ell}\right)\right)}2 \cdots \frac{\left( 1+ \left( \frac {\Delta_k'}{\ell}\right)\right)}2\,. 
  $$
(The difference between the left hand side and the right hand side comes from those $\ell$ dividing one of the $\Delta'_j$.) 
Expanding the right hand side gives that for $x$ large enough in terms of the $\Delta_j'$'s,
$$
 \frac{\pi(x)}{2^k}+  \frac 1{2^k}\sum_{g=1}^{k} \sum_{1\leq j_1<\dots < j_g \leq k} \sum_{\ell \leq x} \left( \frac{\Delta_{j_1}'\cdots \Delta_{j_g}'}{\ell}\right) \geq \frac{\pi(x)}{2^k} 
 +O_A \left(  \frac{x}{(\log x)^A} \right)\,,
 $$
by Lemma \ref{lem:pnt}. The lemma follows.
\end{proof}

\begin{lemma}\label{lem:pickLambda2}
Let $\Lambda_0$ be a set of primes of lower density $\delta_0$. Let $N_1$ and $N_2$ be 
positive natural numbers and let $p$ be a fixed prime number.
 The set of primes 
$$
\{\ell\in\Lambda_0\colon (p,\ell^{j}+1)=1\,,1\leq j\leq N_1,\,\, (p,\ell^{i}-1)=1\,,\, 1\leq i\leq N_2\}
$$
has lower natural density at least
$$
\delta_0-\frac{N_1(N_1+1)+N_2(N_2+1)}{2(p-1)}\,.
$$
\end{lemma}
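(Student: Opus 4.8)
The plan is to reduce the statement to a counting of "bad" primes via inclusion–exclusion, where a prime $\ell$ is bad if it fails one of the coprimality conditions with the fixed prime $p$. First I would observe that for a fixed prime $p$ and a fixed exponent $j$, the condition $(p,\ell^j+1)\neq 1$ means $p\mid \ell^j+1$, i.e.\ $\ell^j\equiv -1\pmod p$, and similarly $(p,\ell^i-1)\neq 1$ means $\ell^i\equiv 1\pmod p$. Each of these is a congruence condition on $\ell$ modulo $p$: the set of residue classes $a\in(\Zz/p\Zz)^\times$ with $a^j\equiv -1$ has size at most $j$ (it is the set of roots of $X^j+1$ in $\Fp_p$), and likewise the set with $a^i\equiv 1$ has size at most $i$. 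Hence, by the prime number theorem in arithmetic progressions (or merely Dirichlet's theorem with the natural-density form), the density of primes $\ell$ satisfying $p\mid\ell^j+1$ is at most $j/(p-1)$, and the density of those satisfying $p\mid\ell^i-1$ is at most $i/(p-1)$.

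Next I would take a union bound over the finitely many forbidden exponents. The set of primes $\ell$ that fail \emph{some} condition has upper density at most
$$
\sum_{j=1}^{N_1}\frac{j}{p-1}+\sum_{i=1}^{N_2}\frac{i}{p-1}=\frac{1}{p-1}\left(\frac{N_1(N_1+1)}{2}+\frac{N_2(N_2+1)}{2}\right)=\frac{N_1(N_1+1)+N_2(N_2+1)}{2(p-1)}\,.
$$
Intersecting with $\Lambda_0$, whose lower density is $\delta_0$, and removing this bad set, the lower density of the resulting set is at least $\delta_0$ minus the displayed quantity, which is exactly the claimed bound. (One should note that if $p$ itself lies in $\Lambda_0$ it contributes a density-zero correction and is harmless; the finitely many $\ell$ with $\ell\le$ some bound are likewise negligible for densities.)

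The only subtlety — and it is minor — is making precise that "$\ell$ in a union of at most $d$ residue classes mod $p$" has density at most $d/(p-1)$ as a set of \emph{primes}: this is immediate from the prime number theorem for arithmetic progressions, since each admissible class (those coprime to $p$) carries density $1/\varphi(p)=1/(p-1)$, and the classes $0\bmod p$ contains only the single prime $p$. I expect no genuine obstacle here; the bookkeeping of the two triangular sums $\sum_{j\le N_1} j$ and $\sum_{i\le N_2} i$ is the only thing to carry out carefully, and it produces precisely the factor $N_1(N_1+1)+N_2(N_2+1)$ over $2(p-1)$.
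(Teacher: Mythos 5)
Your argument is essentially identical to the paper's: both recast each coprimality condition $p\mid \ell^j\pm 1$ as a congruence condition on $\ell$ modulo $p$ having at most $j$ solutions (roots of $X^j \pm 1$ in $\Fp_p$), apply the prime number theorem in arithmetic progressions to assign density $1/(p-1)$ to each class, and take a union bound over exponents to produce the triangular sums $N_1(N_1+1)/2$ and $N_2(N_2+1)/2$. There is no meaningful difference in method.
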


\begin{proof}
 For any integer $i\geq 1$ let $\mu_i(\Fp_p)$ be the subgroup of $\Fp_p^\times$ 
 consisting of $i$-th roots of unity. Of course $\#\mu_i(\Fp_p)\leq i$ with equality if and only if 
 $i\mid p-1$. Let $\zeta\in\mu_i(\Fp_p)$ then the Prime Number Theorem in arithmetic 
 progressions asserts that the set of primes congruent to $\zeta$ modulo $p$ has density 
 $1/(p-1)$.  Thus the density of primes $\ell$ that are congruent to some element of $\mu_i(\Fp_p)$ 
 is $\#\mu_i(\Fp_p)/(p-1)$. Summing over $i$ we deduce that the upper density of primes 
 $\ell$ lying in $\cup_{1\leq i\leq N_2}\mu_i(\Fp_p)$ is at most $N_2(N_2+1)/(2(p-1))$. We handle 
 the condition $(p,\ell^{j}+1)=1$ (for $1\leq j\leq N_1$) in the same way,
  replacing roots of unity by roots of the 
 polynomial $X^{j}+1$ that are of cardinality at most $j$ in $\Fp_p$.
\end{proof}

We now have all the necessary ingredients to derive Theorem~\ref{th:main1} and 
Theorem~\ref{th:main2}. To begin with we invoke Theorem~\ref{KatzBigMonodromy} 
and Theorem~\ref{KBM2}. Thanks to Proposition~\ref{prop:monodromymodell} and to the
Goursat--Kolchin--Ribet Theorem (as stated e.g.~in~\cite[Prop.~5.1, Lemma 5.2]{Chav97})
 we deduce the existence 
of a set consisting of all prime numbers but finitely many of them such that condition 
(i) of Theorem~\ref{product-sieve-statement} is satisfied. Let us mention here 
that to deduce the existence 
of a Galois \'etale cover of the parameter variety with suitable properties 
from Proposition~\ref{prop:monodromymodell}, we invoke~\cite[Lemma $4.1$]{Jou09}. 

The set of primes obtained depends only on $k$ and 
on the dimension of the parameter variety 
(which in turn only depends on $d$ in the case of the
 quadratic twist family and on the degree $d$ of the 
 divisor $D$ in case of the pullback family). Using 
 Lemma~\ref{lem:pickLambda1} we can shrink this set of primes so that condition (ii) of 
 Theorem~\ref{product-sieve-statement} is satisfied. 
 This new set of primes $\Lambda_0(d,k)$ has lower density $\delta_0(d,k)$ at least 
 $2^{-k}$. We next apply Lemma~\ref{lem:pickLambda2} to $\Lambda_0(d,k)$. The integers $N_1$ 
 and $N_2$ (that are the dimensions of the alternating or symmetric 
 bilinear spaces involved) only depend on $d$ and $k$ so that for large enough $p$ the quantity 
 $$
 \delta_0(d,k)-\frac{N_1(N_1+1)+N_2(N_2+1)}{2(p-1)}
 $$
 is positive. Thus condition (ii) of Theorem~\ref{product-sieve-statement} is fulfilled for 
 the set of primes $\Lambda_{0}(d,k)$.
 The conclusion of 
Theorem~\ref{product-sieve-statement} follows and thus Corollary~\ref{cor4.7} applies to 
both the settings of Theorem~\ref{th:main1} and 
Theorem~\ref{th:main2} which finishes the proof of both these results.

 \par\medskip
 {\bf Acknowledgements.} We would like to thank E.~Kowalski for several useful discussions 
 and for pointing out to us relevant references where arguments needed in the proof of 
 Lemma~\ref{lem:operationscommute} are also used. The second author was supported by an NSERC Postdoctoral Fellowship.

\begin{bibdiv} 
\begin{biblist} 

\bib{BH12}{article}{
   author={Baig, Salman},
   author={Hall, Chris},
   title={Experimental data for Goldfeld's conjecture over function fields},
   journal={Exp. Math.},
   volume={21},
   date={2012},
   number={4},
   pages={362--374},
   doi={10.1080/10586458.2012.671638},
}

\bib{Cha08}{article}{
  author={Cha, Byungchul},
  title={Chebyshev's bias in function fields},
  journal={Compos. Math.},
  volume={144},
  date={2008},
  number={6},
  pages={1351--1374},
}

\bib{CFJ14}{article}{
  author={Cha, Byungchul},
  author={Fiorilli, Daniel},
  author={Jouve, Florent},
  title={Prime number races for elliptic curves over function fields},
   date={2015},
  eprint={www.math.u-psud.fr/~jouve/EllCurvesBias.pdf},
  status={preprint},
  }

\bib{Chav97}{article}{
   author={Chavdarov, Nick},
   title={The generic irreducibility of the numerator of the zeta function
   in a family of curves with large monodromy},
   journal={Duke Math. J.},
   volume={87},
   date={1997},
   number={1},
   pages={151--180},
}

\bib{Che53}{article}{
  author={Chebyshev, Pafnuti\u \i \ L\cprime vovich},
  title={Lettre de M. le professeur Tch\'{e}bychev \`{a} M. Fuss sur un nouveau th\'{e}or\`{e}me relatif 
  aux nombres premiers contenus dans les formes $4n+1$ et $4n+3$},
  journal={Bull. Classe Phys. Acad. Imp. Sci. St. Petersburg},
  volume={11},
  date={1853},
  pages={208},
}

\bib{CHT08}{article}{
   author={Clozel, Laurent},
   author={Harris, Michael},
   author={Taylor, Richard},
   title={Automorphy for some $l$-adic lifts of automorphic mod $l$ Galois
   representations},
   journal={Publ. Math. Inst. Hautes \'Etudes Sci.},
   number={108},
   date={2008},
   pages={1--181},
}


\bib{Del80}{article}{
  author={Deligne, Pierre},
  title={La conjecture de Weil. II},
  language={French},
  journal={Inst. Hautes \'Etudes Sci. Publ. Math.},
  number={52},
  date={1980},
  pages={137--252},
}




\bib{Fio13}{article}{
   author = {Fiorilli, Daniel},
    title = {Elliptic curves of unbounded rank and Chebyshev's bias},
   journal={Int. Math. Res. Not. IMRN},
     date = {2014},
   number={18},
   pages={4997--5024},
}



\bib{Hal08}{article}{
   author={Hall, Chris},
   title={Big symplectic or orthogonal monodromy modulo $l$},
   journal={Duke Math. J.},
   volume={141},
   date={2008},
   number={1},
   pages={179--203},
}

\bib{HSBT10}{article}{
   author={Harris, Michael},
   author={Shepherd-Barron, Nick},
   author={Taylor, Richard},
   title={A family of Calabi-Yau varieties and potential automorphy},
   journal={Ann. of Math. (2)},
   volume={171},
   date={2010},
   number={2},
   pages={779--813},
}

\bib{Jou09}{article}{
   author={Jouve, Florent},
   title={Maximal Galois group of $L$-functions of elliptic curves},
   journal={Int. Math. Res. Not. IMRN},
   date={2009},
   number={19},
   pages={3557--3594},
}

\bib{Jou10}{article}{
   author={Jouve, Florent},
   title={The large sieve and random walks on left cosets of arithmetic
   groups},
   journal={Comment. Math. Helv.},
   volume={85},
   date={2010},
   number={3},
   pages={647--704},
}

\bib{JRV14}{article}{
   author={Jouve, F.},
   author={Rodriguez Villegas, F.},
   title={On the bilinear structure associated to Bezoutians},
   journal={J. Algebra},
   volume={400},
   date={2014},
   pages={161--184},
}

\bib{Kat02}{book}{
  author={Katz, Nicholas M.},
  title={Twisted $L$-functions and monodromy},
  series={Annals of Mathematics Studies},
  volume={150},
  publisher={Princeton University Press},
  place={Princeton, NJ},
  date={2002},
  pages={viii+249},
}

\bib{Kat05}{book}{
   author={Katz, Nicholas M.},
   title={Moments, monodromy, and perversity: a Diophantine perspective},
   series={Annals of Mathematics Studies},
   volume={159},
   publisher={Princeton University Press, Princeton, NJ},
   date={2005},
   pages={viii+475},
}

\bib{Kat12}{article}{
   author={Katz, Nicholas M.},
   title={Report on the irreducibility of $L$-functions},
   conference={
      title={Number theory, analysis and geometry},
   },
   book={
      publisher={Springer},
      place={New York},
   },
   date={2012},
   pages={321--353},
}

\bib{KL90}{book}{
   author={Kleidman, Peter},
   author={Liebeck, Martin},
   title={The subgroup structure of the finite classical groups},
   series={London Mathematical Society Lecture Note Series},
   volume={129},
   publisher={Cambridge University Press, Cambridge},
   date={1990},
   pages={x+303},
}

 \bib{Kow08a}{book}{
   author={Kowalski, E.},
   title={The large sieve and its applications},
   series={Cambridge Tracts in Mathematics},
   volume={175},
   note={Arithmetic geometry, random walks and discrete groups},
   publisher={Cambridge University Press, Cambridge},
   date={2008},
   pages={xxii+293},
}

\bib{Kow08b}{article}{
   author={Kowalski, Emmanuel},
   title={The large sieve, monodromy, and zeta functions of algebraic
   curves. II. Independence of the zeros},
   journal={Int. Math. Res. Not. IMRN},
   date={2008},
}

\bib{Lar95}{article}{
   author={Larsen, M.},
   title={Maximality of Galois actions for compatible systems},
   journal={Duke Math. J.},
   volume={80},
   date={1995},
   number={3},
   pages={601--630},
   }
   
  \bib{MVW84}{article}{
   author={Matthews, C. R.},
   author={Vaserstein, L. N.},
   author={Weisfeiler, B.},
   title={Congruence properties of Zariski-dense subgroups. I},
   journal={Proc. London Math. Soc. (3)},
   volume={48},
   date={1984},
   number={3},
   pages={514--532},
}

\bib{Maz08}{article}{
  author={Mazur, Barry},
  title={Finding meaning in error terms},
  journal={Bull. Amer. Math. Soc. (N.S.)},
  volume={45},
  date={2008},
  number={2},
  pages={185--228},
  }
  


\bib{RS94}{article}{
  author={Rubinstein, Michael},
  author={Sarnak, Peter},
  title={Chebyshev's bias},
  journal={Experiment. Math.},
  volume={3},
  date={1994},
  number={3},
  pages={173--197},
}

\bib{Sar07}{article}{
  author={Sarnak, Peter},
  title={Letter to Barry Mazur on Chebyshev's bias for $\tau (p)$},
  date={2007},
  eprint={http://publications.ias.edu/sarnak/},
}


\bib{Tay08}{article}{
   author={Taylor, Richard},
   title={Automorphy for some $l$-adic lifts of automorphic mod $l$ Galois
   representations. II},
   journal={Publ. Math. Inst. Hautes \'Etudes Sci.},
   number={108},
   date={2008},
   pages={183--239},

}

\bib{Ulm05}{article}{
  author={Ulmer, Douglas},
  title={Geometric non-vanishing},
  journal={Invent. Math.},
  volume={159},
  date={2005},
  number={1},
  pages={133--186},
}

\bib{Ulm14}{article}{
   author={Ulmer, Douglas},
   title={Explicit points on the Legendre curve},
   journal={J. Number Theory},
   volume={136},
   date={2014},
   pages={165--194},
}



\end{biblist} 
\end{bibdiv} 
\end{document}